\setlist[itemize,enumerate]{itemsep=0em, topsep=0.3em}
\theoremstyle{plain}
\newtheorem{theorem}{Theorem}[chapter]
\newtheorem{lemma}[theorem]{Lemma}
\newtheorem{corollary}[theorem]{Corollary}
\newtheorem{axiom}[theorem]{Axiom}
\newtheorem{typee}[theorem]{Proposition}
\newtheorem{proposition}[theorem]{Proposition}
\newtheoremstyle{non-italic}
  {3pt} 
  {3pt} 
  {\normalfont} 
  {} 
  {\bfseries} 
  {.} 
  { } 
  {} 
\theoremstyle{non-italic}
\newtheorem{example}[theorem]{Example}
\newtheorem{definition}[theorem]{Definition}
\newtheoremstyle{remark}
  {3pt} 
  {3pt} 
  {\normalfont} 
  {} 
  {\itshape} 
  {.} 
  { } 
  {} 
\theoremstyle{remark}
\newtheorem{remark}[theorem]{Remark}
\renewcommand{\texttt}[1]{\(\mathtt{#1}\)}
\newcommand{\Type}{\mathtt{Type}}
\newcommand{\pr}{\mathtt{pr}}
\newcommand{\unt}{\mathbb{1}}
\newcommand{\redDiamond}{\,\textcolor{red}{$\blacksquare$}\,}
\newcommand{\ct}{%
  \mathchoice{\mathbin{\raisebox{0.5ex}{$\displaystyle\centerdot$}}}%
             {\mathbin{\raisebox{0.5ex}{$\centerdot$}}}%
             {\mathbin{\raisebox{0.25ex}{$\scriptstyle\,\centerdot\,$}}}%
             {\mathbin{\raisebox{0.1ex}{$\scriptscriptstyle\,\centerdot\,$}}}
}
\definecolor{customgreen}{RGB}{60,179,113}
\definecolor{customred}{RGB}{205,92,92}
\definecolor{customblue}{HTML}{0074D9}
\definecolor{customgreen}{RGB}{60,179,113}
\definecolor{customblue}{HTML}{0074D9}
\definecolor{customred}{RGB}{205,92,92}
\newcommand{\figOne}{
\begin{tikzpicture}[
    label/.style={rectangle, inner sep=4pt, outer sep=0pt},
]

\draw[gray] (0,0) circle (1.2cm);
\draw[gray] (5,0) ellipse (3cm and 2.1cm);
\draw[gray, dashed] (5,1) ellipse (1.7cm and 0.8cm);
\draw[gray, dashed] (5,-0.95) ellipse (1.7cm and 0.8cm);

\fill (0,0.5) circle (2.1pt) node[label, above] {$x$};
\fill (0,-0.5) circle (2.1pt) node[label, below] {$y$};
\fill (4,0.8) circle (2.1pt) node[label, above] {$f(x)$};
\fill (4,-0.8) circle (2.1pt) node[label, below] {$f(y)$};
\fill (6,0.8) circle (2.1pt) node[label, above] {$g(x)$};
\fill (6,-0.8) circle (2.1pt) node[label, below] {$g(y)$};

\draw[-{Stealth[scale=1.5]}] (0,0.5) -- node[left=2pt] {$p$} (0,-0.5);
\draw[-{Stealth[scale=1.5]}] (4,0.8) -- node[left=2pt] {$\mathtt{ap}_f(p)$} (4,-0.8);
\draw[-{Stealth[scale=1.5]}] (6,0.8) -- node[right=2pt] {$\mathtt{ap}_g(p)$} (6,-0.8);
\draw[-{Stealth[scale=1.5]}] (4,0.8) -- node[above=2pt] {$u$} (6,0.8);
\draw[-{Stealth[scale=1.5]}] (4,-0.8) -- node[above=2pt] {$p_*(u)$} (6,-0.8);

\node[anchor=east] at (-1.2,0) {$A$};
\node[anchor=west] at (8,0) {$B$};

\end{tikzpicture}
}
\newcommand{\figTrans}{
\begin{tikzpicture}[
    label/.style={rectangle, inner sep=4pt, outer sep=0pt},
]

\draw[gray] (0,0) circle (1.2cm);
\draw[gray] (4,2) circle (0.8cm);
\draw[gray] (4,-1.3) circle (1.6cm);

\fill (0,0.5) circle (2.1pt) node[label, above] {$x$};
\fill (0,-0.5) circle (2.1pt) node[label, below] {$y$};
\fill (4,2) circle (2.1pt) node[label, above] {$f(x)$};
\fill (4,-1.9) circle (2.1pt) node[label, below] {$f(y)$};
\fill (4,-0.7) circle (2.1pt) node[label, above] {$p_*(f(x))$};

\draw[-{Stealth[scale=1.5]}] (0,0.5) -- node[left=2pt] {$p$} (0,-0.5);
\draw[-{Stealth[scale=1.5]}] (4,-0.7) -- node[left=1pt] {$\mathtt{apd}_f(p)$} (4,-1.9);
\draw[-{Straight Barb[scale=1.5]}] (4,1.2) -- node[right=2pt] {$p_* \equiv \mathtt{transport}^P(p)$} (4,0.3);

\node[label, anchor=south] at (0,1.2) {$A$};
\node[label, anchor=west] at (4.8,2) {$P(x)$};
\node[label, anchor=west] at (5.6,-1.3) {$P(y)$};

\end{tikzpicture}
}
\newcommand{\figTransFun}{
\begin{tikzpicture}[
    label/.style={rectangle, inner sep=3pt, outer sep=0pt},
]

\draw[gray] (-1,0) circle (1.2cm);
\draw[gray] (3.7,1) circle (0.5cm);
\draw[gray] (6.3,1) circle (0.5cm);
\draw[gray, dashed] (5,1.2) ellipse (2.5cm and 1.1cm);
\draw[gray, dashed] (5,-1.2) ellipse (2.5cm and 1.1cm);
\draw[gray] (3.7,-1) circle (0.5cm);
\draw[gray] (6.3,-1) circle (0.5cm);

\fill (-1,0.5) circle (2.1pt) node[label, above] {$x_1$};
\fill (-1,-0.5) circle (2.1pt) node[label, below] {$x_2$};

\draw[-{Stealth[scale=1.5]}] (-1,0.5) -- node[left=2pt] {$p$} (-1,-0.5);
\draw[-{Straight Barb[scale=1.5]}] (3.7,0.5) -- node[left=2pt] {$\mathtt{transport}^A(p)$} (3.7,-0.5);
\draw[-{Straight Barb[scale=1.5]}] (6.3,0.5) -- node[right=2pt] {$\mathtt{transport}^B(p)$} (6.3,-0.5);
\draw[-{Straight Barb[scale=1.5]}] (4.2,1) -- node[above=2pt] {$f$} (5.8,1);
\draw[-{Straight Barb[scale=1.5]}] (4.2,-1) -- node[above=2pt] {$p_*(f)$} (5.8,-1);

\node[label, anchor=south] at (-1,1.2) {$X$};
\node[label, anchor=south] at (3.7,1.5) {$A(x_1)$};
\node[label, anchor=north] at (3.7,-1.5) {$A(x_2)$};
\node[label, anchor=south] at (6.3,1.5) {$B(x_1)$};
\node[label, anchor=north] at (6.3,-1.5) {$B(x_2)$};

\end{tikzpicture}
}
\newcommand{\figTwo}{
\begin{tikzpicture}[
    label/.style={rectangle, inner sep=4pt, outer sep=0pt},
]

\draw[gray] (0,0) circle (1.2cm);
\draw[gray] (4,0) circle (2.1cm);

\fill (0,0.5) circle (2.1pt) node[label, above] {$x$};
\fill (0,-0.5) circle (2.1pt) node[label, below] {$y$};
\fill (4.5,0) circle (2.1pt) node[label, right] {$f(x)$};
\fill (4.5,1.2) circle (2.1pt) node[label, above] {$p_*(f(x))$};
\fill (4.5,-1.2) circle (2.1pt) node[label, below] {$f(y)$};

\draw[-{Stealth[scale=1.5]}] (0,0.5) -- node[left=2pt] {$p$} (0,-0.5);
\draw[-{Stealth[scale=1.5]}] (4.5,0) -- node[right=2pt] {$\mathtt{ap}_f(p)$} (4.5,-1.2);
\draw[-{Stealth[scale=1.5]}] (4.5,1.2) -- node[right=2pt] {$\mathtt{tr\text{-}const}^B_p(f(x))$} (4.5,0);
\draw[-{Stealth[scale=1.5]}] (4.5,1.2) to[out=-180,in=180] node[left=2pt] {$\mathtt{apd}_f(p)$} (4.5,-1.2);

\draw[-{Straight Barb[scale=1.5]}] (1.2,0) -- node[below=2pt] {$f$} (1.9,0);

\draw[-{Straight Barb[scale=1.5]}] (6.1,0) .. controls (8.1,1.2) and (8.1,-1.2) .. (6.1,0) node[midway, right=2pt] {$p_*$};

\node[label, anchor=south] at (0,1.2) {$A$};
\node[label, anchor=south] at (4,2.1) {$B$};

\end{tikzpicture}
}
\newcommand{\figTransPath}{
\begin{tikzpicture}[
    label/.style={rectangle, inner sep=4pt, outer sep=0pt},
]

\draw[gray] (0,0) circle (2cm);

\fill (-0.6,1) circle (2.1pt) node[label, above] {$x_1$};
\fill (-0.6,-1) circle (2.1pt) node[label, below] {$x_2$};
\fill (1,0) circle (2.1pt) node[label, right] {$a$};

\draw[-{Stealth[scale=1.5]}] (-0.6,1) -- node[left=4pt] {$p$} (-0.6,-1);
\draw[-{Stealth[scale=1.5]}] (1,0) -- node[above right=2pt] {$q_1$} (-0.6,1);
\draw[-{Stealth[scale=1.5]}] (1,0) -- node[below right=2pt] {$p_*(q_1)$} (-0.6,-1);

\begin{scope}[rotate=60]
    \draw[gray, dashed] (0.5,0.2) ellipse (0.7cm and 1.6cm);
\end{scope}

\begin{scope}[rotate=-60]
    \draw[gray, dashed] (0.8,0.1) ellipse (0.7cm and 1.6cm);
\end{scope}

\node[label, anchor=east] at (-2,0) {$A$};

\end{tikzpicture}
}
\newcommand{\figPoOne}{
    \begin{tikzpicture}[
        label/.style={rectangle, inner sep=4pt, outer sep=0pt},
        point/.style={circle, inner sep=0.5mm, fill=black},
        circ/.style={draw=gray, fill=none}
    ]
    
    \draw[circ] (0,0) circle (0.8cm); 
    \draw[circ] (-2.8,0) circle (1.1cm); 
    \draw[circ] (2.8,0) circle (1.1cm); 
    \draw[circ, dashed] (-2.8,-3) circle (1.1cm); 
    \draw[circ, dashed] (2.8,-3) circle (1.1cm); 
    \draw[circ] (0,-3) ellipse (5cm and 1.5cm); 
    
    \fill[black] (0,0) circle (2.1pt) node[label, above] {$c$}; 
    \fill[black] (-2.6,-0.3) circle (2.1pt) node[label, above] {$f(c)$}; 
    \fill[black] (2.7,-0.4) circle (2.1pt) node[label, above] {$g(c)$}; 
    \fill[black] (-2.6,-3.3) circle (2.1pt) node[label, above] {$\mathtt{inl}(f(c))$}; 
    \fill[black] (2.7,-3.4) circle (2.1pt) node[label, above] {$\mathtt{inr}(g(c))$}; 
    
    \draw[-{Stealth[scale=1.5]}] (-2.6,-3.3) .. controls (0,-4) .. (2.7,-3.4) node[midway, below] {$\mathtt{glue}(c)$}; 
    \draw[-{Straight Barb[scale=1.5]}] (-0.8,0) -- (-1.7,0) node[midway, above] {$f$}; 
    \draw[-{Straight Barb[scale=1.5]}] (0.8,0) -- (1.7,0) node[midway, above] {$g$}; 
    \draw[-{Straight Barb[scale=1.5]}] (-2.8,-1.1) -- (-2.8,-1.75) node[midway, left] {$\mathtt{inl}$}; 
    \draw[-{Straight Barb[scale=1.5]}] (2.8,-1.1) -- (2.8,-1.75) node[midway, right] {$\mathtt{inr}$}; 
    \draw[-{Straight Barb[scale=1.5]}] (0,-0.8) -- (0,-1.5) node[midway, right] {$\mathtt{glue}$}; 
    
    \node[anchor=east] at (-3.9,0) {$A$};
    \node[anchor=north] at (0,1.3) {$C$};
    \node[anchor=west] at (3.9,0) {$B$};
    \node[anchor=north] at (0,-1.6) {$A \sqcup^{C} B$};
    
    \end{tikzpicture}
}
\newcommand{\figFourOneSeven}{
    \begin{tikzpicture}[
        label/.style={rectangle, inner sep=4pt, outer sep=0pt},
        point/.style={circle, inner sep=0.5mm, fill=black},
        circ/.style={draw=gray, fill=none}
    ]
    
    \draw[circ] (-1,0) circle (1.2cm); 
    \draw[circ] (4,0) ellipse (2.32cm and 2.25cm); 
    
    \fill[black] (-1,0.4) circle (2.1pt) node[label, above] {$x$}; 
    \fill[black] (-1,-0.4) circle (2.1pt) node[label, below] {$\star_{X}$}; 
    \fill[black] (3,1.1) circle (2.1pt) node[label, above] {$f(x)$}; 
    \fill[black] (3,-1.1) circle (2.1pt) node[label, below] {$f(\star_{X})$}; 
    \fill[black] (4,0) circle (2.1pt) node[label, right] {$\star_{Y}$}; 
    
    \draw[-{Stealth[scale=1.5]}] (-1,0.4) -- (-1,-0.4) node[midway, left] {$q$}; 
    \draw[-{Stealth[scale=1.5]}] (3,-1.1) -- (3,1.1) node[midway, left, yshift=10px] {$\mathtt{ap}_f(q^{-1})$}; 
    \draw[-{Stealth[scale=1.5]}] (3,1.1) -- (4,0) node[midway, above right] {$p$}; 
    \draw[-{Stealth[scale=1.5]}] (4,0) -- (3,-1.1)   node[midway, below right] {$f_*^{-1}$}; 
    \draw[-{Stealth[scale=1.5]}] (4,0) .. controls (6,3) and (6,-3) .. (4,0) node[midway, right] {$r$}; 
    
    \node[anchor=east] at (-2.2,0) {$X$};
    \node[anchor=west] at (6.5,0) {$Y$};
    
    \draw[-{Straight Barb[scale=1.5]}] (0.2,0) -- (1.68,0) node[midway, below] {$f$}; 
    
    \end{tikzpicture}
}
\newcommand{\figPoTwo}{
    \begin{tikzpicture}[
        label/.style={rectangle, inner sep=4pt, outer sep=0pt},
        point/.style={circle, inner sep=0.5mm, fill=black},
        circ/.style={draw=gray, fill=none}
    ]
    
    \begin{scope}[rotate=45]
        \draw[circ] (-2.8,0) ellipse (2.2cm and 0.4cm); 
    \end{scope}
    \begin{scope}[rotate=-45]
        \draw[circ] (2.8,0) ellipse (2.2cm and 0.4cm); 
    \end{scope}
    
    \draw[circ] (0,-2.7) circle (0.6cm); 
    \draw[circ] (0,-5.5) ellipse (5cm and 1.5cm); 
    
    \draw[circ, dashed] (-2.8,-5.5) circle (1.1cm); 
    \draw[circ, dashed] (2.8,-5.5) circle (1.1cm); 
    
    \fill[black] (-1.4,-1.4) circle (2.1pt) node[label, below left=2pt] {$x$}; 
    \fill[black] (1.4,-1.4)circle (2.1pt) node[label, below right=2pt] {$y$}; 
    \fill[black] (0.1,-2.5) circle (2.1pt) node[label, below] {$q$}; 
    \fill[black] (-2.3,-5.8) circle (2.1pt) node[label, left] {$\mathtt{inl}(x)$}; 
    \fill[black] (2.3,-5.8) circle (2.1pt) node[label, right] {$\mathtt{inr}(y)$}; 
    
    \node[anchor=south east] at (-2.8,-1.8) {$X$}; 
    \node[anchor=south west] at (2.8,-1.8) {$Y$}; 
    \node[anchor=south] at (0,-2.2) {$Q(x)(y)$};
    \node[anchor=north] at (0,-4) {$X \sqcup^{Q} Y$};
    
    \draw[-{Stealth[scale=1.5]}] (-2.3,-5.8) .. controls (0,-6.5) .. (2.3,-5.8) node[midway, above=5pt] {$\mathtt{glue}_{x, y} (q) $}; 
    
    \draw[-{Straight Barb[scale=1.5]}] (-2.8,-3.2) -- (-2.8,-4.25) node[midway, left=2pt] {$\mathtt{inl}$}; 
    \draw[-{Straight Barb[scale=1.5]}] (2.8,-3.2) -- (2.8,-4.25) node[midway, right=2pt] {$\mathtt{inr}$}; 
    \draw[-{Straight Barb[scale=1.5]}] (0,-3.3) -- (0,-4) node[midway, right] {$\mathtt{glue}_{x, y}$}; 
    
    \draw[-{Straight Barb[scale=1.5]}, gray, dashed] (-1.1,-1.7) -- (-0.5,-2.4); 
    \draw[-{Straight Barb[scale=1.5]}, gray, dashed] (1.1,-1.7) -- (0.5,-2.4); 
    
    \end{tikzpicture}
}
\newcommand{\figCode}{
    \begin{tikzpicture}[
        label/.style={rectangle, inner sep=4pt, outer sep=0pt},
        point/.style={circle, inner sep=0.5mm, fill=black},
        circ/.style={draw=gray, fill=none},
        bluearrow/.style={draw=customblue, -{Stealth[scale=1.5]}, thick},
        redarrow/.style={draw=customred, -{Stealth[scale=1.5]}, thick},
        grayarrow/.style={draw=gray, -{Stealth[scale=1.5]}, thick}
    ]
    
    \draw[circ] (0,0) ellipse (5cm and 2cm); 
    \node[anchor=south] at (0,2.1) {$X \sqcup^{Q} Y$};
    
    \draw[circ, dashed] (-2.8,0) circle (1.5cm); 
    \draw[circ, dashed] (2.8,0) circle (1.5cm); 
    
    \fill[black] (-2.8,0.7) circle (2.1pt) node[label, above] {$\mathtt{inl}(x_0)$}; 
    \fill[black] (-2.8,-0.7) circle (2.1pt) node[label, below] {$\mathtt{inl}(x_1)$}; 
    \fill[black] (2.8,0.7) circle (2.1pt) node[label, above] {$\mathtt{inr}(y_0)$}; 
    \fill[black] (2.8,-0.7) circle (2.1pt) node[label, below] {$\mathtt{inr}(y_1)$}; 
    
    \draw[bluearrow] (-2.8,0.7) .. controls (0,1.5) .. (2.8,0.7) node[midway, above, text=customblue] {$\mathtt{glue}(q_{00})$}; 
    \draw[bluearrow] (2.8,0.7) .. controls (0,1) .. (-2.8,-0.7) node[midway, below, xshift=10pt, text=customblue] {$\mathtt{glue}(q_{10})^{-1}$}; 
    \draw[redarrow] (-2.8,0.7) .. controls (0,-1) .. (2.8,-0.7) node[midway, above, xshift=10pt, text={rgb,255:red,205; green,92; blue,92}] {$\mathtt{glue}(q_{01})$}; 
    \draw[-{Stealth[scale=1.5]}] (-2.8,-0.7) .. controls (0,-1.5) .. (2.8,-0.7) node[midway, below] {$\mathtt{glue}(q_{11})$}; 
    
    \draw[grayarrow] (-2.8,0.7) .. controls (0,0.9) .. (2.8,-0.7) node[midway, above, xshift=-10pt, text=gray] {$r$}; 
    \draw[grayarrow] (-2.8,0.7) .. controls (-3.5,0) .. (-2.8,-0.7) node[midway, left, text=gray] {$s$}; 
    
    \end{tikzpicture}
}
\newcommand{\figAbc}{
    \begin{tikzpicture}[
        label/.style={rectangle, inner sep=4pt, outer sep=0pt},
    ]
    
    \draw[gray] (-1,0) circle (1.2cm);
    \draw[gray] (3.7,1) circle (0.5cm);
    \draw[gray] (7,0) ellipse (1cm and 2cm);
    \draw[gray] (7,1.5) circle (0.2cm);
    \draw[gray] (7,0.6) circle (0.2cm);
    \draw[gray] (7,-0.6) circle (0.2cm);
    \draw[gray] (7,-1.5) circle (0.2cm);
    \draw[gray] (3.7,-1) circle (0.5cm);
    
    \fill (-1,0.5) circle (2.1pt) node[label, above] {$a_1$};
    \fill (-1,-0.5) circle (2.1pt) node[label, below] {$a_2$};
    \fill (3.7,1.1) circle (2.1pt) node[label, left] {$\textcolor{customblue}{m_*^{-1}}(b_2)$};
    \fill (3.7,-0.8) circle (2.1pt) node[label, below] {$b_2$};
    
    \draw[-{Stealth[scale=1.5]}] (-1,0.5) -- node[left=2pt] {$m$} (-1,-0.5);
    \draw[-{Straight Barb[scale=1.5]}, customblue] (3.7,0.5) -- node[left=2pt, customblue] {$m_*$} (3.7,-0.5);
    \draw[-{Straight Barb[scale=1.5]}] (4.2,1) .. controls (5.5,1) and (5.5,1) .. node[above=2pt] {$C(a_1)$} (6.15,1);
    \draw[-{Straight Barb[scale=1.5]}] (4.2,-1) .. controls (5.1,-1.5) .. node[below=2pt] {$C(a_2)$} (6.15,-1);
    \draw[-{Straight Barb[scale=1.5]}] (4.2,-1) .. controls (5.1,-0.5)  .. node[above=2pt] {$\textcolor{customred}{m_*}(C(a_1))$} (6.1,-0.9);
    \draw[-{Straight Barb[scale=1.5]}, customgreen] (8,0) .. controls (9,1) and (9,-1) .. node[right=2pt, customgreen] {$m_*$} (8,0);
    
    \draw[-{Stealth[scale=1.5]}] (5.1,-0.6) -- (5.1,-1.4);
    
    \draw[-{Stealth[scale=1.5]}] (7,1.3) -- (7,0.8);
    \draw[-{Stealth[scale=1.5]}] (7,0.4) -- (7,-0.4);
    \draw[-{Stealth[scale=1.5]}] (7,-0.8) -- (7,-1.3);
    
    \node[label, anchor=south] at (-1,1.2) {$A$};
    \node[label, anchor=south] at (3.7,1.5) {$B(a_1)$};
    \node[label, anchor=north] at (3.7,-1.5) {$B(a_2)$};
    \node[label, anchor=south] at (7,2) {$\mathtt{Type}$};
    
    \node[label, anchor=west] at (7.2,1.5) {$C(a_1)(\textcolor{customblue}{m_*^{-1}}(b_2))$};
    \node[label, anchor=west] at (7.2,0.6) {$\textcolor{customgreen}{m_*}(C(a_1)(\textcolor{customblue}{m_*^{-1}}(b_2)))$};
    \node[label, anchor=west] at (7.2,-0.6) {$\textcolor{customred}{m_*}(C(a_1))(b_2)$};
    \node[label, anchor=west] at (7.2,-1.5) {$C(a_2)(b_2)$};
    
    \end{tikzpicture}
}
\title{Synthetic Homotopy Theory}   
\author{Yuhang Wei}             
\begin{document}

\baselineskip=18pt plus1pt

\setcounter{secnumdepth}{3}
\setcounter{tocdepth}{3}

\maketitle                  
\chapter*{Abstract}
The goal of this dissertation is to present results from synthetic homotopy theory based on homotopy type theory (HoTT). After an introduction to Martin-Löf's dependent type theory and homotopy type theory, key results include a synthetic construction of the Hopf fibration, a proof of the Blakers--Massey theorem, and a derivation of the Freudenthal suspension theorem, with calculations of some homotopy groups of $n$-spheres.

\textbf{Keywords}: homotopy type theory, homotopy theory, algebraic topology, type theory, constructive mathematics
\chapter*{Acknowledgements}

I would like to thank my dissertation supervisor, Prof. Kobi Kremnitzer, for his invaluable guidance, inspirations and encouragement during this journey. His expertise and support have been instrumental in the completion of this work.

My gratitude also goes to my college supervisor, Dr. Rolf Suabedissen, for providing insightful advice on the structure and direction of this dissertation.

I am profoundly grateful to my parents for their unconditional and unwavering support in my academic pursuits and, indeed, all aspects of my life. Their efforts in their own field of research and teaching have been a constant source of motivation for me.

Finally, special thanks are due to all of my friends who helped me decide my academic path in pure mathematics or supported me in different ways during the writing of this dissertation. An incomplete list of their names follows: Qixuan Fang, Franz Miltz, Samuel Knutsen, Quanwen Chen, Sifei Li, Phemmatad Tansoontorn, Hao Chen and Haoyang Liu. This dissertation would not have been possible without each and every one of them.

\section*{Note}
A slightly abridged version of this dissertation, originally written in Typst, was submitted in fulfillment of the requirements for the degree of MSc Mathematical Sciences at University of Oxford. That version was awarded the Departmental Dissertation Prize by the Mathematical Institute, University of Oxford. The current version has been extended and reformatted in \LaTeX.

\begin{romanpages}          
\tableofcontents            
\listoffigures              
\listoftables
\end{romanpages}            


\chapter{\texorpdfstring{Introduction }{Introduction }}

The goal of this dissertation is to present results in \emph{synthetic
homotopy theory} based on \emph{homotopy type theory} (HoTT, also known
as \emph{univalent foundations}), including the \emph{Hopf fibration}
and \emph{Blakers--Massey theorem}.

Compared to set theory, \emph{type theory} is often considered as a
better foundation for constructive mathematics and mechanised reasoning
\protect{\cite[Section 1.2]{kbh}}. Martin-Löf's \emph{intuitionistic type theory}
(ITT, also known as \emph{dependent type theory}) was first proposed in
the 1970s \protect{\cite{martin}}. In ITT, any mathematical object is seen as inherently
a \emph{term} of some \emph{type}. In particular, ITT follows the
\emph{Curry--Howard correspondence}, or the paradigm of
\emph{propositions-as-types}, where any proposition is regarded as a
type and proving it is equivalent to constructing a term of that type.
Since all terms of types can be seen as computer programmes and be
executed, \emph{proof assistants}, e.g. Coq, Agda and Lean, have been
developed to ensure the correctness of proofs through type-checking
\protect{\cite[Introduction]{brunerie}}.

HoTT is based on the \emph{intensional} version of ITT, where a term of
any \emph{identity type} (i.e., the proposition that two terms of a type
are equal) is not necessarily unique. This seemingly counterintuitive
situation is addressed by interpreting types as topological spaces and
identities as paths between points. The link between homotopy theory and
type theory was first discovered in 2006 independently by Voevodsky and
by Awodey and Warren \protect{\cite{aw}}. In 2009, Voevodsky proposed the
\emph{univalence axiom} (UA), which formalises the idea that `isomorphic
structures have the same properties'. In 2011, the concept of
\emph{higher inductive types} (HIT) emerged \protect{\cite{higher}}, which allows the
use of \emph{path constructors} in addition to \emph{point
constructors}, so that many fundamental topological spaces can be
elegantly constructed. In summary, the components of HoTT are
\[\text{ HoTT } = \text{ ITT } + \text{ UA } + \text{ HIT}.\]

HoTT and \(\infty\)-topoi form a syntax-semantics duality, and all
results proven in HoTT hold in the \(\infty\)-category of homotopy types
\protect{\cite{semantics}}. This lays the foundation for our \emph{synthetic} approach
to homotopy theory, where all constructions are invariant under homotopy
equivalences. Unlike in classical (or \emph{analytical}) homotopy
theory, notions such as spaces, points, continuous functions, and
continuous paths are not based on set-theoretic constructions but are
\emph{axiomatised} as primitive ones, showing the \emph{formal
abstraction} provided by type theory \protect{\cite[Section 1.2]{kbh}}.

Many results of synthetic homotopy theory, including the Hopf fibration,
Freudenthal suspension theorem, and the van Kampen theorem, can be found
in \protect{\cite[Section 8]{hott}}. More recent efforts in this field include
\(\pi_{4}\left( {\mathbb{S}}^{3} \right) \simeq {\mathbb{Z}}/2{\mathbb{Z}}\)
\protect{\cite{brunerie}}, the Blakers--Massey theorem \protect{\cite{blakers}}, the Cayley--Dickson
construction \protect{\cite{cayley}}, the construction of the real projective spaces
\protect{\cite{rpn}}, and the long exact sequence of homotopy \(n\)-groups \protect{\cite{les-group}}.
Additionally, synthetic (co)homology theory has been investigated by
\protect{\cite{syn-coho}}, \protect{\cite{cellular}} and \protect{\cite{syn-ho}}.

This dissertation begins with a brief summary of dependent type theory
in \Cref{sec-type-theory} and an overview of HoTT in \Cref{sec-hott}. \Cref{sec-n-types}
mainly discusses \emph{homotopy \(n\)-types} and \emph{\(n\)-connected
types}. \Cref{sec-homotopy} develops the \emph{fibre sequence}, \emph{Hopf
construction} and \emph{Hopf fibration}. \Cref{sec-blakers} proves the
\emph{Blakers--Massey theorem}. Along the journey, we will calculate the
homotopy groups of \(n\)-spheres in \Cref{tab-homotopy-groups}.

\begin{table}[h]
\centering
\caption{Homotopy groups of $n$-spheres calculated in this dissertation.}
\label{tab-homotopy-groups}
\renewcommand{\arraystretch}{1.4}
\begin{tabular}{ c|>{\centering\arraybackslash}p{1.3cm} >{\centering\arraybackslash}p{1.3cm} >{\centering\arraybackslash}p{1.3cm} c }
\toprule
 & $\mathbb{S}^1$ & $\mathbb{S}^2$ & $\mathbb{S}^3$ & $\mathbb{S}^{n\geq4}$ \\
\midrule
$\pi_1$ & $\mathbb{Z}$ & $0$ & $0$ & \multirow{3}{*}{$\pi_{k < n} (\mathbb{S}^n)\simeq 0$} \\
$\pi_2$ & $0$ & $\mathbb{Z}$ & $0$ & \\
$\pi_3$ & $0$ & $\mathbb{Z}$ & $\mathbb{Z}$ & \\
$\pi_{k\geq4}$ & $0$ & \multicolumn{2}{c }{$\pi_k (\mathbb{S}^2)\simeq \pi_k (\mathbb{S}^3)$} & $\pi_n (\mathbb{S}^n) \simeq  \mathbb{Z}$ \\
\bottomrule
\end{tabular}
\end{table}

Although no result is essentially new, calculations and discussions
original to this dissertation are marked with a red square (\redDiamond). For
instance, \Cref{sec-n-types} contains discussions on path inductions,
\Cref{strong-path-induction} and \Cref{transport-path-induction}, and novel
examples, \Cref{s1-not-1} and \Cref{graph-example}. Most noticeably, \Cref{sec-blakers}
`informalises' a mechanised Blakers--Massey theorem proof \protect{\cite{blakers}} by
integrating details from a Freudenthal suspension theorem proof
\protect{\cite[Section 8.6]{hott}}, before explicitly deriving the Freudenthal
suspension theorem as a corollary. Additionally, visualisation diagrams
are plotted for certain definitions and theorems to aid understanding.

\chapter{Dependent type theory}\label{sec-type-theory}

\textbf{Dependent type theory} is a formal system that organises all
mathematical objects, structures and knowledge by expressing them as
\textbf{types} or \textbf{terms} of types \protect{\cite[p. 1]{rijke}}. A type is
defined by rules specifying its own formation and the construction of
its terms. Types resemble sets; however, a major difference is that
membership in a type is not a predicate but is part of the inherent
nature of its term.

We write \(a:A\) if \(a\) is a term of type \(A\),
where we also say that type \(A\) is \textbf{inhabited} and \(a\) is an
\textbf{inhabitant} of \(A\). All types are considered terms of a
special type, \(\Type\), and we denote \(A:\Type\) if
\(A\) is a type\footnote{We suppress the underlying universe level (see
  \protect{\cite[Section 1.3]{hott}}).}. If \(A:\Type\) and \(a,b:A\), we
write \(a \equiv b\) to indicate that \(a\) and \(b\) are
\textbf{judgementally equal}, meaning that \(a\) and \(b\) are only
symbolically different. When defining a new term \(b:A\) from \(a:A\),
we use the notation \(b: \equiv a\). Judgemental equalities are to be
contrasted with propositional equalities introduced in \Cref{sec-id-type}.

We shall briefly sketch the fundamentals
of dependent type theory, based on \protect{\cite[Chapter I]{rijke}} and
\protect{\cite[Chapter 1]{hott}}. Importantly, these constructions all bear
interpretations in logic and topology in \Cref{interpretation-table}.

\begin{table}[ht]
\centering
\caption{Interpretations of type theory in logic and topology.}
\label{interpretation-table}
\renewcommand{\arraystretch}{1.4}
\begin{tabular}{ >{\centering\arraybackslash}m{3cm} >{\centering\arraybackslash}m{5.2cm} >{\centering\arraybackslash}m{5.2cm} }
\toprule
\textbf{Type theory}& \textbf{Logic} & \textbf{Topology} \\
\midrule
$A : \mathtt{Type}$ & a proposition $A$ & a topological space $A$ \\
$a : A$ & a proof $a$ of the proposition $A$ & a point $a$ in the space $A$ \\
$A \to B$ & if $A$ then $B$ & a continuous map $A \to B$ \\
$P: A \to \mathtt{Type}$ & a predicate $P(x)$ for each $x : A$ & a fibration $P$ over $A$ with fibre $P(x)$ for each $x : A$ \\
$(x: A) \to P(x)$ & for all $x : A$, $P(x)$ & a continuous section of  fibration $P$ \\
$\sum_{x : A} P(x)$ & there exists $x: A$, $P(x)$\tablefootnote{This type carries more information than merely a 'true or false' judgement of the existence, since a term of this type provides a concrete $a : A$ as well as a term $b : P(a)$, i.e., a proof for $P(a)$.} & the total space of  fibration $P$ \\
$A \times B$ & $A$ and $B$ & the product space $A \times B$ \\
$A + B$ & $A$ or $B$\tablefootnote{A term of $A + B$ also carries the information 'which one of $A$ and $B$ is true'.} & the disjoint union $A \sqcup B$ \\
$\mathbb{1}$ & True & the contractible space \\
$\mathbb{0}$ & False & the empty space \\
$p : u =_A v$ & $p$ is a proof of the equality of $u$ and $v$ & $p$ is a continuous path from $u$ to $v$ \\
$\mathtt{refl}_a : a =_A a$ & the reflexivity of equality at $a$ & the constant path at $a$ \\
\bottomrule
\end{tabular}
\end{table}

\section{Function types}

Let \(A:\Type\). A \textbf{dependent type} (or a \textbf{family
of types}) \(P:A \rightarrow \Type\) over \(A\) assigns a type
\(P(x):\Type\) to each term \(x:A\).

A \textbf{dependent function} is a function whose output types may vary
depending on its input. The \textbf{dependent function type}, also known
as \textbf{\(\Pi\)-type}, is specified by:

\begin{enumerate}
\item
  The \emph{formation rule}: Given a type \(A\) and a dependent type
  \(P:A \rightarrow \Type\), the dependent function type
  \((x:A) \rightarrow P(x)\) can be formed.
\item
  The \emph{introduction rule}: To introduce a dependent function of
  type \((x:A) \rightarrow P(x)\), a term \(b(x):P(x)\) is needed for
  each \(x:A\). This dependent function is then denoted by
  \(\lambda x.b(x)\).
\item
  The \emph{elimination rule}: Given a dependent function
  \(f:(x:A) \rightarrow P(x)\) and a term \(a:A\), a term \(f(a):P(a)\)
  is obtained by evaluating \(f\) at \(a\).
\item
  The \emph{computation rule}: Combining the introduction and
  elimination rules, \(\left( \lambda x.b(x) \right)(a): \equiv b(a)\).
\end{enumerate}

In particular, when \(P\) is a constant type family assigning the same
type \(B\) to every \(a:A\), we obtain the \textbf{(non-dependent)
function type} \(A \rightarrow B\).

\section{Inductive types}

A wide range of constructions are based on inductive types. An
\textbf{inductive type} \(A\) is specified by the following:

\begin{enumerate}
\item
  The \emph{constructors} (or \textit{construction rules}) of \(A\) state the ways to construct a term of
  \(A\).
\item
  The \emph{induction principle} states the necessary data to construct
  a dependent function \((x:A) \rightarrow P(x)\) for a given dependent
  type \(P:A \rightarrow \Type\). Particularly, the
  \emph{recursion principle} is the induction principle when \(P\) is a
  constant type.
\item
  The \emph{computation rule }states how the dependent function produced
  by the induction principle acts on the constructors.
\end{enumerate}

\subsection{\texorpdfstring{\(\Sigma\)-types}{\textbackslash Sigma-types}}

Given a type \(A\) and a dependent type
\(P:A \rightarrow \Type\), we can form the
\textbf{\(\Sigma\)-type}, written as \(\sum_{x:A}P(x),\) whose terms are
called \textbf{dependent pairs}.

\begin{enumerate}
\item
  The \emph{construction rule} states that given \(a:A\) and \(b:P(a)\),
  we can construct the dependent pair \((a,b):\sum_{x:A}P(x)\).
\item
  The \emph{induction principle} states that given

  \begin{itemize}
  \item
    a dependent type \(C:\sum_{x:A}P(x) \rightarrow \Type\) and
  \item
    a function
    \(g:(a:A) \rightarrow \left( b:P(a) \right) \rightarrow C(a,b)\)\footnote{Arrows
      \(\rightarrow\) are right-associative.},
  \end{itemize}

  there is a dependent function
  \(f:\left( p:\sum_{x:A}P(x) \right) \rightarrow C(p)\).
\item
  The \emph{computation rule} states that the above \(f\) satisfies
  \(f(a,b): \equiv g(a)(b)\).
\end{enumerate}

The function \(g\) above is the \textbf{curried} form of \(f\) and \(f\)
is the \textbf{uncurried} form of \(g\), and they can be used
interchangeably. Unlike the convention in \protect{\cite{hott}}, we use the notation
that reflects the actual curriedness of a function, i.e., \(g(a)(b)\) is
not written as \(g(a,b)\).

Using the induction principle, we define the first projection function
\(\pr_{1}:\sum_{x:A}P(x) \rightarrow A\) by
\(\lambda a.\lambda b.a:(a:A) \rightarrow \left( b:P(a) \right) \rightarrow A\)
and the second projection function
\(\pr_{2}:\left( p:\sum_{x:A}P(x) \right) \rightarrow P\left( \pr_{1}(p) \right)\)
by
\(\lambda a.\lambda b.b:(a:A) \rightarrow \left( b:P(a) \right) \rightarrow P(a)\).

The case when \(P\) is a constant type \(B\) gives the \textbf{cartesian
product} \(A \times B\). In this case, we write \(\mathtt{fst}\) and
\(\mathtt{snd}\) for the projection maps onto \(A\) and \(B\),
respectively.

\subsection{Other inductive types}

Some other inductive types are specified as follows based
on \protect{\cite[Section 2]{hott-agda}}.

\begin{itemize}
    \item The \textbf{empty type} \(\mathbb{0}\) has no construction rule. The induction principle states that given $ (P : \mathbb{0} \to \mathtt{Type}) $, there is a dependent function $((x : \mathbb{0}) \to P(x))$. In particular, the recursion principle states that for any $A: \Type$, we can define a function $f: \mathbb{0} \to A$, which can be logically interpreted as the principle of explosion, or \textit{ex falso quodlibet}. 
    There is also no computation rule for $\mathbb{0}$. 
    For any type $A$, $\neg A$ is defined as the function type $A \to \mathbb{0}$. 
    \item The \textbf{unit type} $\unt$ has a single constructor $\star : \unt$. The induction principle for $\unt$ states that given a dependent type $P : \unt \to \Type$ and a term $p : P(\star)$, there is a dependent function $f : (x: \unt) \to P(x)$. The computation rule\footnote{Henceforth, we may concisely state the computation rule as a part of the induction principle.} states that such constructed $f$ satisfies $f(\star) :\equiv p$. 
    \item Let $A, B : \Type$. The \textbf{disjoint sum} $A + B$ of $A$ and $B$ has construction rules $\mathtt{inl} : A \to A + B$ and $\mathtt{inr}: B \to A+B$. The induction principle states that given  
    \begin{itemize}
        \item a dependent type $ P : (A+B) \to \mathtt{Type}$,
        \item two dependent functions $f_{\mathtt{inl}} : (a : A) \to P(\mathtt{inl}(a))$ and
        \item $  f_{\mathtt{inr}} : (b: B) \to P(\mathtt{inr}(b))$,
    \end{itemize}
    there is a dependent function $f:((x : A+B) \to P(x)) $ such that $f (\mathtt{inl}(a)) :\equiv f_{\mathtt{inl}} (a)$ and $f (\mathtt{inr}(b)) :\equiv f_{\mathtt{inr}} (b) $.
    \item The \textbf{boolean type} $\mathbb{2}$ is defined as 
    \(\mathbb{2} \equiv \mathbb{1} + \mathbb{1}\), and its two constructors are written as \(\mathtt{True}\) and \(\mathtt{False}\). 
    \item The type of \textbf{natural numbers} $\mathbb{N}$ has construction rules $0: \mathbb{N}$ and $\mathtt{succ} : \mathbb{N} \to \mathbb{N}$. The induction principle states that given
\begin{itemize}
\item a dependent type $P : \mathbb{N} \to \mathtt{Type}$,
\item a term $f_0 : P(0)$, and
\item a dependent function $f_{\mathtt{succ}} :  (n: \mathbb{N}) \to P(n) \to P(\mathtt{succ}(n))$,
\end{itemize}
there is a dependent function $f:((n : \mathbb{N}) \to P(n))$ such that $f(0) : \equiv f_0$ and $f(\mathtt{succ}(n)) : \equiv f_{\mathtt{succ}} (n)(f(n))$.
\item The  type of \textbf{integers} \(\mathbb{Z}\) has three constructors, representing zero,
the positive and negative numbers, and its induction principle and
computation rules are similar to \(\mathbb{N}\) \protect{\cite[Section 1.3]{brunerie}}.
\end{itemize}

\section{Identity types}
\label{sec-id-type}{}

Given a type \(A\) and two terms \(u:A\) and \(v:A\), we can form the
\textbf{identity type} \(u =_{A}^{}v\), whose terms are called
\textbf{(propositional) equalities} or \textbf{paths}. The constructor
for a \emph{family} of identity types \(\left( a =_{A}^{}a \right)\)
indexed by \(a:A\) is given by a dependent function
\[\mathtt{refl}:(a:A) \rightarrow \left( a =_{A}^{}a \right).\]
However, \textbf{Axiom K}, the assumption that
\(p = \mathtt{refl}_{a}\) for any \(p:\left( a =_{A}^{}a \right)\),
does not hold in general.

For a fixed \(a:A\), the induction principle (called the \textbf{path
induction}) for a family of identity types
\(\left( a =_{A}^{}x \right)\) indexed by \(x:A\) states that given

\begin{itemize}
\item
  a dependent type
  \(P:(x:A) \rightarrow \left( a =_{A}^{}x \right) \rightarrow \Type\)
  and
\item
  a term \(d:P(a)\left( \mathtt{refl}_{a} \right)\),
\end{itemize}
there is a dependent function
\[J_{P}(d):(x:A) \rightarrow \left( p:\left( a =_{A}^{}x \right) \right) \rightarrow P(x)(p),\]
such that \(J_{P}(d)(a)\left( \mathtt{refl}_{a} \right): \equiv d\).
The path induction implies that to construct a dependent function for a
path \(p\) from a fixed endpoint \(a\) to a free \(x\), it suffices to
assume that \(x\) is \(a\) and \(p\) is \(\mathtt{refl}_{a}\).
Importantly, one endpoint \(x\) should be free, since otherwise we would
have Axiom K. Intuitively, the path induction reflects the fact that the
space of all paths starting from a fixed point is \emph{contractible}.
We shall revisit this idea in \Cref{sigma-id-contractible}.

The reader is advised to review \Cref{interpretation-table} at this point.

\chapter{Homotopy type theory}\label{sec-hott}

The implications of the identity type are profound, especially when it
interacts with other types. In \Cref{sec-inf}, the identity type equips each
type with an \emph{\(\infty\)-groupoid structure}. In \Cref{sec-dp}, we
explore how equalities can be passed along by non-dependent and
dependent functions. The notions of `equivalence' between functions and
between types are subtle ones treated in \Cref{sec-homo}, and their
relationship with paths in function types or in \(\Type\) is
handled in \Cref{sec-ua} by \emph{function extensionality} and
\emph{univalence axiom}. In \Cref{sec-hit}, we introduce \emph{higher
inductive types} that allow \emph{path constructors}. Together, these
form the basic components of HoTT.

\section{The \texorpdfstring{$\infty$}{infinity}-groupoid structure of types}
\label{sec-inf}

In classical mathematics, equalities are reflexive, symmetric and
transitive. While reflexivity is expressed as the \(\mathtt{refl}\)
constructor for identity types, symmetry and transitivity appear in the
form of path inversion and concatenation.

\begin{lemma}[\protect{\cite[Lemma 2.1.1]{hott}}]
  For every type \(A\) and
\(x,y:A\), there is a function
\(\left( x =_{A}^{}y \right) \rightarrow \left( y =_{A}^{}x \right)\),
denoted \(p \mapsto p^{- 1}\), such that
\(\left( \mathtt{refl}_{x} \right)^{- 1}: \equiv \mathtt{refl}_{x}\),
where \(p^{- 1}\) is called the \textbf{inverse} of \(p\). 
\end{lemma}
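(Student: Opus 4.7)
The plan is to construct $p^{-1}$ by a single application of path induction. Although the statement is phrased as producing, for every $x, y : A$, a function $(x =_A y) \to (y =_A x)$, the clean formulation is to fix $x : A$ and treat both $y : A$ and $p : x =_A y$ as free; the result then abstracts uniformly over $x$.

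The key step is choosing the right motive. I would take the dependent type $P : (y : A) \to (x =_A y) \to \Type$ defined by $P(y)(p) :\equiv (y =_A x)$, so that producing a term of $P(y)(p)$ is exactly producing a candidate inverse of $p$. Note that the input and output identity types have their endpoints swapped, so the \emph{free} endpoint in the induction must be $y$, with $x$ held fixed; this matches precisely the form of path induction stated in the excerpt, whose motive ranges over paths from a fixed left endpoint to a varying right endpoint.

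To invoke the principle, I need a term $d : P(x)(\mathtt{refl}_x)$. Unfolding, $P(x)(\mathtt{refl}_x) \equiv (x =_A x)$, which is inhabited by $\mathtt{refl}_x$, so I set $d :\equiv \mathtt{refl}_x$. Path induction then delivers a dependent function $J_P(d) : (y : A) \to (p : x =_A y) \to (y =_A x)$, and I define $p^{-1} :\equiv J_P(\mathtt{refl}_x)(y)(p)$. The computation rule of path induction immediately yields $J_P(\mathtt{refl}_x)(x)(\mathtt{refl}_x) \equiv \mathtt{refl}_x$, which is the required judgemental equality $(\mathtt{refl}_x)^{-1} \equiv \mathtt{refl}_x$. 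There is essentially no obstacle here — this is a textbook one-shot path induction — and the only conceptual subtlety is recognising that the correct motive sends a path to the \emph{type of its candidate inverses}, rather than, say, to an equality between paths.
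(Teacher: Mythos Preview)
Your proof is correct and follows exactly the paper's approach: fix $x$, apply path induction with $y$ and $p$ free, and supply $\mathtt{refl}_x$ in the base case. Your write-up is simply a more explicit unfolding of the same one-line argument, spelling out the motive $P(y)(p) :\equiv (y =_A x)$ and the invocation of $J_P$ that the paper leaves implicit.
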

\begin{remark} By the paradigm of propositions-as-types, this lemma is
equivalent to the type
\[\left( A:\Type \right) \rightarrow (x:A) \rightarrow (y:A) \rightarrow \left( p:\left( x =_{A}^{}y \right) \right) \rightarrow \left( y =_{A}^{}x \right).\]
The proof shows how we construct a term of this type. \end{remark}

\begin{proof} Fix \(A:\Type\) and \(x:A\). By path induction, it
suffices to assume \(y\) is \(x\) and \(p:\left( x =_{A}^{}y \right)\)
is \(\mathtt{refl}_{x}\). Then it suffices to give a term of
\(x =_{A}^{}x\), which is fulfilled by \(\mathtt{refl}_{x}\). We thus
have
\(\left( \mathtt{refl}_{x} \right)^{- 1}: \equiv \mathtt{refl}_{x}\).
\end{proof}
\begin{lemma}[\protect{\cite[Lemma 2.1.2]{hott}}]
  For every type \(A\) and
  \(x,y,z:A\), there is a function
  \(\left( x =_{A}^{}y \right) \rightarrow \left( y =_{A}^{}z \right) \rightarrow \left( z =_{A}^{}x \right)\),
  denoted \(p \mapsto q \mapsto p{\ \ct\ }\ q\), such that
  \(\mathtt{refl}_{x}{\ \ct\ }\ \mathtt{refl}_{x}: \equiv \mathtt{refl}_{x}\),
  where \(p{\ \ct\ }\ q\) is called the \textbf{concatenation} of \(p\) and
  \(q\). 
\label{path-concat} \end{lemma}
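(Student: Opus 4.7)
The plan is to mimic the proof of the inversion lemma but use path induction twice in succession. The goal is to construct, for each $A : \Type$ and $x, y, z : A$, a function
\[
\mathtt{concat}_{x,y,z} : (x =_A y) \to (y =_A z) \to (x =_A z),
\]
so I would fix $A$ and $x$, and then consider the family
\[
P(y)(p) :\equiv (z : A) \to (y =_A z) \to (x =_A z),
\]
indexed by $y : A$ and $p : x =_A y$. To apply path induction on $p$, I need a term $d : P(x)(\mathtt{refl}_x)$, i.e.\ a function $(z : A) \to (x =_A z) \to (x =_A z)$; for this I perform a second path induction, this time on an arbitrary $q : x =_A z$, reducing to the case $z \equiv x$ and $q \equiv \mathtt{refl}_x$, where $\mathtt{refl}_x : x =_A x$ suffices. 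Path induction then delivers the desired dependent function, whose value at $(y, p, z, q)$ I denote by $p \ct q$.

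For the judgemental equality $\mathtt{refl}_x \ct \mathtt{refl}_x :\equiv \mathtt{refl}_x$, I would simply unfold the two computation rules of path induction in turn: the first collapses $\mathtt{refl}_x \ct q$ to the inner dependent function applied to $q$, and the second collapses its value at $\mathtt{refl}_x$ to $\mathtt{refl}_x$.

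I do not anticipate any real obstacle here, since the construction is the prototypical example of nested path induction in HoTT and closely parallels the previous lemma. The only subtlety worth noting is a design choice: one could alternatively do a single path induction on $p$ and let the second argument remain untouched, giving $\mathtt{refl}_x \ct q :\equiv q$ on the nose but only a propositional equality $p \ct \mathtt{refl} = p$; doing the double induction yields the symmetric-looking $\mathtt{refl}_x \ct \mathtt{refl}_x :\equiv \mathtt{refl}_x$ stated in the lemma, at the cost of neither one-sided unit law holding judgementally. I would adopt the double-induction version since that is what the statement of the lemma requires.
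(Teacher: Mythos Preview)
Your proposal is correct and follows exactly the same approach as the paper, which simply states ``By path induction twice.'' Your write-up just supplies the details (the family $P$, the nested induction, and the computation rule) that the paper omits, and your remark on the single- versus double-induction design choice is a nice bonus.
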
 \begin{proof}By path
induction twice.\end{proof}

\begin{lemma}[\protect{\cite[Lemma 2.1.4]{hott}}]
  Path concatenation is
  associative, i.e., for any \(x,y,z,w:A\), \(p:x =_{A}^{}y\),
  \(q:y =_{A}^{}z\), and \(r:z =_{A}^{}w\), there is a \(2\)-dimensional
  path
  \(p{\ \ct\ }\ \left( q{\ \ct\ }\ r \right) = \left( p{\ \ct\ }\ q \right){\ \ct\ }\ r\).
\end{lemma}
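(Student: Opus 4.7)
The plan is to prove associativity by a threefold application of path induction, collapsing the general case to one where all three paths are reflexivities and both sides compute judgementally to the same term. This is the prototypical technique for establishing coherence laws in the higher groupoid structure on a type, exploiting the fact that identity types are generated by the $\mathtt{refl}$ constructor.

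First, I would fix $A:\Type$ and $x:A$, and apply path induction on $p:x =_A y$ with $y$ free. This reduces the goal to the case where $y$ is $x$ and $p$ is $\mathtt{refl}_x$, so it suffices to produce, for every $z,w:A$ and $q:x =_A z$, $r:z =_A w$, a term of type
\[\mathtt{refl}_x \ct (q \ct r) = (\mathtt{refl}_x \ct q) \ct r.\]
A second path induction on $q$ (fixing $x$ and letting $z$ be free) and a third on $r$ (fixing $x$ and letting $w$ be free) further reduce the problem to the single case in which $q$ is $\mathtt{refl}_x$ and $r$ is $\mathtt{refl}_x$.

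In this base case, the computation rule for $\ct$ given in \Cref{path-concat} applies repeatedly: both sides of the desired equality become $\mathtt{refl}_x \ct (\mathtt{refl}_x \ct \mathtt{refl}_x)$ and $(\mathtt{refl}_x \ct \mathtt{refl}_x) \ct \mathtt{refl}_x$, each of which is judgementally equal to $\mathtt{refl}_x$. Hence $\mathtt{refl}_{\mathtt{refl}_x}$ inhabits the required identity type, and unwinding the three inductions yields the dependent function witnessing associativity in general.

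There is no real obstacle in the argument; the only point to watch is that each invocation of path induction requires one endpoint of the path to remain free. Because the earlier inductions have already specialised the relevant source endpoints to $x$, the successive paths $q$ and $r$ take the form needed to carry out the next induction (with $z$ free for $q$, and $w$ free for $r$). Maintaining this discipline is essential, since otherwise one would tacitly be invoking Axiom K, which is unavailable in the intensional setting adopted here.
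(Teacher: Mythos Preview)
Your proposal is correct and follows essentially the same approach as the paper: apply path induction to reduce $p$, $q$, $r$ all to $\mathtt{refl}_x$, observe that both sides are then judgementally $\mathtt{refl}_x$ by the computation rule for concatenation, and inhabit the goal with $\mathtt{refl}_{\mathtt{refl}_x}$. Your additional remarks about keeping one endpoint free at each step are a welcome elaboration but do not change the strategy.
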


\begin{proof} By path induction, we assume \(p,q,r\) are all
\(\mathtt{refl}_{x}\). Then
\(\mathtt{refl}_{x}{\ \ct\ }\ \left( \mathtt{refl}_{x}{\ \ct\ }\ \mathtt{refl}_{x} \right) \equiv \mathtt{refl}_{x} \equiv \left( \mathtt{refl}_{x}{\ \ct\ }\ \mathtt{refl}_{x} \right){\ \ct\ }\ \mathtt{refl}_{x}\)
by \Cref{path-concat}, so we give \(\mathtt{refl}_{\mathtt{refl}_{x}}\).
\end{proof}

We similarly can show that path concatenation satisfies inverse and
identity laws \protect{\cite[Lemma 2.1.4]{hott}}. The above are all examples of
\textbf{coherence operations} on paths or higher-dimensional paths. We
omit further discussions and direct to \protect{\cite[Section 2.1]{hott}} or
\protect{\cite[Section 1.4]{brunerie}} for details. Henceforth, we shall omit proofs
which only require path inductions.

We then observe that the identity types can be constructed recursively:
for any type \(A\), we can form \(x =_{A}^{}y\),
\(p =_{x =_{A}^{}y}^{}q\), and \(r =_{p =_{x =_{A}^{}y}^{}q}^{}s\), etc.
This observation, combined with the next definition, \emph{pointed
types}, gives rise to the notion of \emph{loop spaces}.

\begin{definition} The type of \textbf{pointed types} is defined as
\(\sum_{A:\Type}A\). Alternatively, a pointed type is a
dependent pair \(\left( A, \star_{A} \right)\) where \(A:\Type\)
and \(\star_{A}:A\), and \(\star_{A}\) is called the \textbf{basepoint}
of this pointed type. We often leave the basepoint implicit. \end{definition}

\begin{example} Let \(A:\Type\), then define a pointed type
\(A_{+}: \equiv \left( A + \mathbb{1},\mathtt{inr}( \star ) \right)\).
This is called \textbf{adjoining a disjoint basepoint}. \end{example}

\begin{definition} The \textbf{loop space} of a pointed type
\(\left( A, \star_{A} \right)\) is defined as the pointed type
\[\Omega\left( A, \star_{A} \right): \equiv \left( \left( \star_{A} =_{A}^{} \star_{A} \right),\mathtt{refl}_{\star_{A}} \right).\]
Then for \(n:{\mathbb{N}}\), define the \textbf{\(n\)-th loop space} of
\(\left( A, \star_{A} \right)\) by \(\Omega^{0}A: \equiv A\) and
\(\Omega^{n + 1}A: \equiv \Omega\left( \Omega^{n}A \right)\) (basepoints
implicit). \end{definition}

In summary, coherence operations and loop spaces demonstrate that the
identity type equips each pointed type with a \emph{(weak)
\(\infty\)-groupoid} structure (see \protect{\cite[Appendix A]{brunerie}} for a
formal definition).

\section{Dependent paths}\label{sec-dp}

In classical mathematics, for any function between sets
\(f:A \rightarrow B\) and \(x,y \in A\), \(x = y\) implies
\(f(x) = f(y)\). The analogue in HoTT is expressed as a path
\(f(x) = f(y)\) induced by the path \(x = y\), demonstrating the
\emph{proof relevance} feature of type theory.

\begin{lemma}[\protect{\cite[Lemma 2.2.1]{hott}}]
  Let \(A,B:\Type\),
 \(f:A \rightarrow B\), \(x,y:A\) and \(p:x =_{A}^{}y\). Then there is a
 path \(\mathtt{ap}_{f}(p):f(x) =_{B}^{}f(y)\), such that
 \(\mathtt{ap}_{f}\left( \mathtt{refl}_{x} \right): \equiv \mathtt{refl}_{f(x)}\).
\label{ap} \end{lemma}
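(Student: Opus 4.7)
The plan is to use the path induction principle from \Cref{sec-id-type} directly. Fix $A, B : \Type$ and $f : A \to B$, and further fix an endpoint $x : A$. The goal is to construct, for every $y : A$ and every $p : x =_A y$, a path of type $f(x) =_B f(y)$. This is exactly the shape to which path induction applies, provided we choose the right type family.

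Concretely, I would define the family
\[P : (y : A) \to (x =_A y) \to \Type, \qquad P(y)(p) :\equiv (f(x) =_B f(y)).\]
Path induction then reduces the problem to supplying a single term of $P(x)(\mathtt{refl}_x) \equiv (f(x) =_B f(x))$, for which $\mathtt{refl}_{f(x)}$ is the obvious choice. Applying $J_P$ yields a dependent function
\[J_P(\mathtt{refl}_{f(x)}) : (y : A) \to (p : x =_A y) \to (f(x) =_B f(y)),\]
and I define $\mathtt{ap}_f(p) :\equiv J_P(\mathtt{refl}_{f(x)})(y)(p)$. The computation rule of path induction then guarantees $\mathtt{ap}_f(\mathtt{refl}_x) :\equiv \mathtt{refl}_{f(x)}$, which is precisely the stated judgemental equality.

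There is no real obstacle here: once the family $P$ is correctly identified, the proof is a one-line application of path induction analogous to those used earlier for path inversion and concatenation. The only conceptual point worth stressing is that one endpoint must remain free (here $y$); otherwise we would be asserting Axiom K rather than using the genuine elimination rule for identity types. Since the excerpt signals that such routine path-induction proofs will henceforth be omitted, this lemma is exactly of the form the author intends to treat briefly.
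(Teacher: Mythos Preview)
Your proof is correct and is exactly the routine path induction the paper has in mind; indeed, the paper does not even spell out a proof here, having announced just before this lemma that proofs requiring only path induction will henceforth be omitted.
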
 However, for a dependent function
\(f:(x:A) \rightarrow P(x)\), there is not necessarily a path from
\(f(x)\) to \(f(y)\), which generally belong to different types \(P(x)\)
and \(P(y)\), even when there is \(p:x =_{A}^{}y\). We have to
`transport' \(f(x)\) from \(P(x)\) to \(P(y)\) `along' \(p\) first,
shown by the following two lemmas, visualised in \Cref{fig-trans}.

\begin{lemma}[\protect{\cite[Lemma 2.3.1]{hott}}]
  Let \(A:\Type\), \(P:A \rightarrow \Type\), \(x,y:A\) and \(p:x =_{A}^{}y\). Then
  there is a function
  \(p_{\ast} \equiv \mathtt{transport}^{P}(p):P(x) \rightarrow P(y)\),
  such that
  \(\left( \mathtt{refl}_{x} \right)_{\ast}: \equiv \operatorname{id}\limits_{P(x)}\).
  \label{transport} \end{lemma}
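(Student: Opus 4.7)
The plan is to apply path induction on $p$. Fix $A:\Type$ and $P:A\to\Type$. For a fixed $x:A$, I set up the motive
\[D:(y:A)\to \bigl(p: x=_A y\bigr)\to \Type\]
by $D(y)(p) :\equiv \bigl(P(x)\to P(y)\bigr)$. The path induction principle then demands a single term $d:D(x)(\mathtt{refl}_x)\equiv \bigl(P(x)\to P(x)\bigr)$, for which I take $d:\equiv \operatorname{id}_{P(x)}$. Applying $J_D$ yields the required dependent function
\[J_D(d):(y:A)\to \bigl(p: x=_A y\bigr)\to \bigl(P(x)\to P(y)\bigr),\]
and I define $\mathtt{transport}^P(p) :\equiv J_D(\operatorname{id}_{P(x)})(y)(p)$, giving the desired $p_\ast:P(x)\to P(y)$.

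The judgemental equality $(\mathtt{refl}_x)_\ast \equiv \operatorname{id}_{P(x)}$ is then immediate from the computation rule for path induction, which stipulates $J_D(d)(x)(\mathtt{refl}_x)\equiv d \equiv \operatorname{id}_{P(x)}$.

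There is no genuine obstacle here; this is perhaps the paradigmatic application of path induction, and the same pattern will reappear in the construction of $\mathtt{apd}$. The only conceptual subtlety is identifying the right motive: one must let $y$ range freely while fixing $x$, because path induction is available precisely for type families indexed by $(y:A)$ and a path $p:x=_A y$ with one endpoint kept free. Had we tried to fix both endpoints and induct on a loop, the construction would amount to Axiom K, which was explicitly rejected in \Cref{sec-id-type}.
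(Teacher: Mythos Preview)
Your proof is correct and matches the paper's approach: the paper omits the proof entirely, having announced just before this lemma that ``we shall omit proofs which only require path inductions,'' so your explicit path-induction argument is precisely the intended one spelled out in full.
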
 
   
   \begin{lemma}[\protect{\cite[Lemma 2.3.4]{hott}}]
 Let \(A:\Type\),
\(P:A \rightarrow \Type\), \(f:(a:A) \rightarrow P(a)\),
\(x,y:A\) and \(p:x =_{A}^{}y\). Then there is a \textbf{dependent path}
\(\mathtt{apd}_{f}(p):f(x) =_{p}^{P}f(y)\) such that
\(\mathtt{apd}_{f}\left( \mathtt{refl}_{x} \right): \equiv \mathtt{refl}_{f(x)}\),
where we define the type
\[\left( u =_{p}^{P}v \right): \equiv \left( \mathtt{transport}^{P}(p)(u) =_{P(y)}^{}v \right)\]
for any \(u:P(x)\) and \(v:P(y)\). \label{apd-path} \end{lemma}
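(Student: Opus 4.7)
The plan is to construct $\mathtt{apd}_f(p)$ by path induction on $p$, in direct analogy with the proofs of \Cref{ap} and \Cref{transport}. Fix $A : \Type$, $P : A \to \Type$, $f : (a : A) \to P(a)$, and $x : A$. Consider the dependent type
\[ Q : (y : A) \to (p : x =_A y) \to \Type \]
defined by $Q(y)(p) :\equiv \bigl(\mathtt{transport}^P(p)(f(x)) =_{P(y)} f(y)\bigr)$, i.e., the type $f(x) =_p^P f(y)$ from the statement. The goal is to produce a dependent function of type $(y:A) \to (p : x =_A y) \to Q(y)(p)$.

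By the path induction principle from \Cref{sec-id-type} applied to $Q$, it suffices to exhibit a single term of $Q(x)(\mathtt{refl}_x)$. Unfolding the definition, this is the type $\mathtt{transport}^P(\mathtt{refl}_x)(f(x)) =_{P(x)} f(x)$. By the computation rule for $\mathtt{transport}$ stated in \Cref{transport}, $\mathtt{transport}^P(\mathtt{refl}_x) \equiv \operatorname{id}_{P(x)}$, so this type is judgementally equal to $f(x) =_{P(x)} f(x)$, which is inhabited by $\mathtt{refl}_{f(x)}$. We therefore take $d :\equiv \mathtt{refl}_{f(x)}$ as the base datum of the induction and define
\[ \mathtt{apd}_f(p) :\equiv J_Q(d)(y)(p). \]

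The computation rule $\mathtt{apd}_f(\mathtt{refl}_x) :\equiv \mathtt{refl}_{f(x)}$ is then immediate from the computation rule of path induction, which states $J_Q(d)(x)(\mathtt{refl}_x) :\equiv d$. There is no substantive obstacle here; the only subtlety worth flagging is that the equation $\mathtt{refl}_{f(x)} : f(x) =_{P(x)} f(x)$ only has the desired type $Q(x)(\mathtt{refl}_x)$ because we have the judgemental equality $\mathtt{transport}^P(\mathtt{refl}_x)(f(x)) \equiv f(x)$ coming from the computation rule of \Cref{transport} — a merely propositional equality would not suffice to feed the base datum into the path induction. With this in mind, the entire proof is a single path induction, consistent with the remark in the text that henceforth such proofs will be omitted.
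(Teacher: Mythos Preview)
Your proof is correct and is precisely the path-induction argument the paper has in mind; indeed, the paper omits the proof entirely under the blanket remark that proofs requiring only path induction are henceforth left to the reader. Your explicit unfolding of the base case and the observation about the judgemental computation rule for \(\mathtt{transport}\) are exactly the points one would check.
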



\begin{figure}[H]
\centering
\figTrans
\caption{Diagram for \Cref{transport} and \Cref{apd-path}.}
\caption*{\small In this and following diagrams, a type is represented as a grey solid circle or ellipse, a term as a black solid dot, a path as a \tikz[baseline=-0.5ex]\draw[-{Stealth[scale=1.5]}, thick] (0,0) -- (0.5,0);  arrow, and a non-dependent function as  a \tikz[baseline=-0.5ex]\draw[-{Straight Barb[scale=1.5]}, thick] (0,0) -- (0.5,0);   arrow.}
\label{fig-trans}

\end{figure}

The next lemma relates the dependent and non-dependent cases, visualised in \Cref{fig-const}.

\begin{lemma}[\protect{\cite[Lemma 2.3.5, Lemma 2.3.8]{hott}}]
 Let
\(A,B:\Type\), \(x,y:A\), and \(p:x =_{A}^{}y\). Let
\(P:A \rightarrow \Type\) be a constant family of types given by
\(\lambda a.B\). Then for any \(b:B\), there is a path
\[\mathtt{tr\text{-}const}_{p}^{B}(b):\mathtt{transport}^{P}(p)(b) =_{B}^{}b.\]
Let \(f:A \rightarrow B\), then
\(\mathtt{apd}_{f}(p) = \mathtt{tr\text{-}const}_{p}^{B}\left( f(x) \right)\ct\mathtt{ap}_{f}(p)\).

\label{transport-constant} \end{lemma}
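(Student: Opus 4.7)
The plan is to prove both parts by path induction on $p : x =_A y$, the standard pattern for claims involving a path with a free endpoint.

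For the first part, since $P \equiv \lambda a.B$ is a constant family, the codomain $P(y)$ reduces judgementally to $B$, so the claim asks for a genuine path in $B$ from $\mathtt{transport}^{P}(p)(b)$ to $b$. Path induction reduces the problem to the case $p \equiv \mathtt{refl}_x$, and by the computation rule in \Cref{transport} we then have $\mathtt{transport}^{P}(\mathtt{refl}_x)(b) \equiv \operatorname{id}_{P(x)}(b) \equiv b$ judgementally. I would therefore set $\mathtt{tr\text{-}const}_{\mathtt{refl}_x}^{B}(b) :\equiv \mathtt{refl}_b$.

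For the second part, I would first verify that both sides inhabit the same type: $\mathtt{apd}_f(p)$ lives in $\mathtt{transport}^{P}(p)(f(x)) =_B f(y)$ upon unfolding $=_p^P$ from \Cref{apd-path}, while the right-hand side concatenates $\mathtt{tr\text{-}const}_p^{B}(f(x)) : \mathtt{transport}^{P}(p)(f(x)) =_B f(x)$ with $\mathtt{ap}_f(p) : f(x) =_B f(y)$, producing a path of the same type. Applying path induction on $p$, it suffices to handle $p \equiv \mathtt{refl}_x$. There, the computation rules of \Cref{ap}, \Cref{apd-path} and the choice just made send $\mathtt{apd}_f(\mathtt{refl}_x)$, $\mathtt{ap}_f(\mathtt{refl}_x)$ and $\mathtt{tr\text{-}const}_{\mathtt{refl}_x}^{B}(f(x))$ all to $\mathtt{refl}_{f(x)}$, and by \Cref{path-concat} the right-hand side reduces to $\mathtt{refl}_{f(x)} \ct \mathtt{refl}_{f(x)} \equiv \mathtt{refl}_{f(x)}$. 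Both sides are thus judgementally equal and $\mathtt{refl}_{\mathtt{refl}_{f(x)}}$ serves as the witness.

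No serious obstacle is expected: the whole argument is another instance of the path-induction recipe used throughout \Cref{sec-inf}. The only step deserving a moment of care is the typing check for the second assertion, where one must exploit the fact that $P$ is constant to see that the concatenation on the right is well-formed and lands in the same type as $\mathtt{apd}_f(p)$.
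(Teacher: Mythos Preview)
Your proposal is correct and matches the paper's approach: the paper explicitly omits the proof, having announced earlier that ``Henceforth, we shall omit proofs which only require path inductions,'' and your argument is precisely the routine path induction (with the typing check you note) that this remark anticipates.
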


\begin{figure}[H]
\centering
\figTwo
\caption{Diagram for \Cref{transport-constant}.}
\label{fig-const}
\end{figure}

The next result shows the functoriality of \texttt{transport}. 
  
  \begin{proposition}[\protect{\cite[Lemma 2.3.9]{hott}}]
    Let \(P:A \rightarrow \Type\), \(p:x =_{A}^{}y\)
    and \(q:y =_{A}^{}z\). Then
    \(\mathtt{transport}^{P}(p\ct q) = \mathtt{transport}^{P}(q)\circ\mathtt{transport}^{P}(p).\)
  \label{transport-functorality} \end{proposition}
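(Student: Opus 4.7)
The plan is to prove this by double path induction, first on $p$ and then on $q$, reducing the statement to a judgemental equality that is witnessed by $\mathtt{refl}$.

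First, fix $A : \Type$, $x : A$ and $P : A \to \Type$. I apply path induction on $p : x =_A y$, so that it suffices to assume $y$ is $x$ and $p$ is $\mathtt{refl}_x$. The goal then becomes, for any $z : A$ and $q : x =_A z$, to produce a term of
\[\mathtt{transport}^P(\mathtt{refl}_x \ct q) =\, \mathtt{transport}^P(q) \circ \mathtt{transport}^P(\mathtt{refl}_x).\]
Next, I apply path induction on $q$, assuming $z$ is $x$ and $q$ is $\mathtt{refl}_x$. Now both sides reduce to something judgementally manageable.

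On the left side, by \Cref{path-concat} we have $\mathtt{refl}_x \ct \mathtt{refl}_x \equiv \mathtt{refl}_x$, and then by \Cref{transport} we have $\mathtt{transport}^P(\mathtt{refl}_x) \equiv \operatorname{id}_{P(x)}$. On the right side, \Cref{transport} gives $\mathtt{transport}^P(\mathtt{refl}_x) \circ \mathtt{transport}^P(\mathtt{refl}_x) \equiv \operatorname{id}_{P(x)} \circ \operatorname{id}_{P(x)} \equiv \operatorname{id}_{P(x)}$. Hence both sides are judgementally equal to $\operatorname{id}_{P(x)}$, and the required path is given by $\mathtt{refl}_{\operatorname{id}_{P(x)}}$.

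There is no real obstacle here: the proof is a routine application of the path induction principle used in the earlier lemmas, and both base cases collapse by the computation rules for path concatenation and transport. The only mild subtlety is that the induction on $q$ must be performed after the induction on $p$, so that the endpoint $y$ has been identified with $x$ and $q$ genuinely has a fixed endpoint free to vary; this is the same pattern already used in \Cref{path-concat}.
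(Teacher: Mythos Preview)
Your proof is correct and follows exactly the approach the paper intends: the paper omits the proof of this proposition under its blanket remark that proofs requiring only path inductions are suppressed, and your double path induction with the computation rules for \(\ct\) and \(\mathtt{transport}\) is precisely that routine argument.
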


It is helpful to characterise \texttt{transport} when
the dependent type \(P:A \rightarrow \Type\) is of particular
forms, as in the following lemmas, visualised in \Cref{fig-1}, \Cref{fig-2} and
\Cref{fig-3}.

\begin{lemma} Let \(A,B:\Type\), \(f,g:A \rightarrow B\),
\(x,y:A\), \(p:x =_{A}^{}y\) and \(u:f(x) =_{B}^{}g(x)\). Let
\(P: \equiv \lambda a.\left( f(a) =_{B}^{}g(a) \right):A \rightarrow \Type\).
Then
\[\mathtt{transport}^{P}(p)(u) =_{f(y) =_{B}^{}g(y)}^{}\left( \mathtt{ap}_{f}(p) \right)^{- 1}\ct u\ct\mathtt{ap}_{g}(p).\]
\label{identity-path-1} \end{lemma}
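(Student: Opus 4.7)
The plan is to use path induction on $p$ to reduce to the reflexivity case, then compute everything judgementally as far as possible, and finally patch up with the unit laws of path concatenation.

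First, I would fix $A$, $B$, $f$, $g$, $x$, and $u$, and consider the statement as a dependent function of $y : A$ and $p : x =_A y$. Since the endpoint $y$ is free, based path induction (the principle from \Cref{sec-id-type}) applies: it suffices to prove the statement when $y$ is $x$ and $p$ is $\mathtt{refl}_x$. Under this assumption, the left-hand side reduces judgementally to $u$, because \Cref{transport} gives $\mathtt{transport}^P(\mathtt{refl}_x) \equiv \mathrm{id}_{P(x)}$.

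Next I would simplify the right-hand side. By \Cref{ap}, $\mathtt{ap}_f(\mathtt{refl}_x) \equiv \mathtt{refl}_{f(x)}$ and $\mathtt{ap}_g(\mathtt{refl}_x) \equiv \mathtt{refl}_{g(x)}$, and inversion of $\mathtt{refl}_{f(x)}$ is again $\mathtt{refl}_{f(x)}$ by the first lemma of \Cref{sec-inf}. So the right-hand side becomes $\mathtt{refl}_{f(x)} \ct u \ct \mathtt{refl}_{g(x)}$. It then remains to exhibit a path $u = \mathtt{refl}_{f(x)} \ct u \ct \mathtt{refl}_{g(x)}$, which is immediate from the left and right unit laws for path concatenation mentioned after \Cref{path-concat} (themselves proved by a further path induction on $u$, which is legitimate here since after the outer induction the endpoints $f(x)$ and $g(x)$ still belong to no specific identity type that forces them to coincide --- wait, but one of the endpoints must be free for based path induction, and here both are fixed; instead the unit laws are proved once and for all by path induction on a path with a free endpoint and then specialised). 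Concatenating with this path finishes the construction.

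The only subtle point --- and the mild obstacle worth flagging --- is that $\mathtt{refl}_{f(x)} \ct u \ct \mathtt{refl}_{g(x)}$ is not judgementally equal to $u$, only propositionally equal via the unit laws; the computation rule of \Cref{path-concat} only gives $\mathtt{refl}_x \ct \mathtt{refl}_x \equiv \mathtt{refl}_x$ in the doubly-reflexive case. Everything else is just bookkeeping of judgemental reductions after the path induction collapses $p$ to $\mathtt{refl}_x$.
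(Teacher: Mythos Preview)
Your proof is correct and matches the paper's approach: the paper explicitly announces that it omits proofs which only require path inductions, and this lemma is one of those, so your path-induction-then-unit-laws argument is exactly what is intended. Your parenthetical self-correction about the unit laws is also right---they are proved once by path induction on a generic path and then specialised, not by inducting on $u$ in situ.
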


\begin{figure}[H]
\centering
\figOne
\caption{Diagram for \Cref{identity-path-1}.}
\caption*{\small The two dashed ellipses represent identity types \(f(x) = g(x)\) and \(f(y) = g(y)\).}
\label{fig-1}
\end{figure}

\begin{lemma}[\protect{\cite[(2.9.4)]{hott}}]
 Let \(X:\Type\) and
\(A,B:X \rightarrow \Type\). Let
\(P: \equiv \lambda x.A(x) \rightarrow B(x):X \rightarrow \Type\).
Let \(x_{1},x_{2}:X\), \(p:x_{1} =_{X}^{}x_{2}\). Let
\(f:P\left( x_{1} \right) \equiv \left( A\left( x_{1} \right) \rightarrow B\left( x_{1} \right) \right)\).
Then
\[\mathtt{transport}^{P}(p)(f) =_{P\left( x_{2} \right)}^{}\mathtt{transport}^{B}(p) \circ f \circ \mathtt{transport}^{A}\left( p^{- 1} \right).\]

\label{function-identity} \end{lemma}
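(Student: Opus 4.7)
The plan is a single application of path induction on $p: x_1 =_X x_2$. Fixing $X: \Type$, type families $A, B: X \to \Type$, a basepoint $x_1: X$, and setting $P \equiv \lambda x.\,A(x) \to B(x)$, I would introduce the family
\[Q: (x_2:X) \to (p: x_1 =_X x_2) \to \Type\]
given by
\[Q(x_2)(p) \equiv \prod_{f: P(x_1)} \bigl(\mathtt{transport}^P(p)(f) =_{P(x_2)} \mathtt{transport}^B(p) \circ f \circ \mathtt{transport}^A(p^{-1})\bigr).\]
Since $x_1$ is fixed and $x_2$ is free, path induction reduces the problem to producing a single term of $Q(x_1)(\mathtt{refl}_{x_1})$.

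In that base case, \Cref{transport} makes each of $\mathtt{transport}^P(\mathtt{refl}_{x_1})$, $\mathtt{transport}^B(\mathtt{refl}_{x_1})$, and $\mathtt{transport}^A(\mathtt{refl}_{x_1}^{-1})$ judgementally the identity function on its respective type, where I am also using that $(\mathtt{refl}_{x_1})^{-1} \equiv \mathtt{refl}_{x_1}$ from the path inversion lemma. The left-hand side of the target equality then collapses to $f$, and the right-hand side to $\operatorname{id}_{B(x_1)} \circ f \circ \operatorname{id}_{A(x_1)}$, which under judgemental $\eta$-conversion for functions is again $f$. The required witness is then simply $\lambda f.\,\mathtt{refl}_f$.

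I do not expect a genuine obstacle here: the entire argument is one path induction followed by computation. The only point requiring care is that $\operatorname{id} \circ f \circ \operatorname{id}$ is definitionally $f$, for which I am relying on the function $\eta$-rule adopted in \cite{hott}; in its absence one would instead insert a routine appeal to function extensionality at the end, but the overall shape of the proof would be unchanged.
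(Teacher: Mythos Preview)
Your proposal is correct and matches the paper's approach: the paper explicitly states that proofs requiring only path induction are omitted, and this lemma is one of those, so your single path induction collapsing both sides to $f$ via the computation rules for $\mathtt{transport}$ and path inversion is exactly what is intended.
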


\begin{figure}[H]
\centering
\figTransFun
\caption{Diagram for \Cref{function-identity}.}
\caption*{\small The two dashed ellipses represent the function types \(A\left( x_{1} \right) \rightarrow B\left( x_{1} \right)\) and \(A\left( x_{2} \right) \rightarrow B\left( x_{2} \right)\).}
\label{fig-2}
\end{figure}

\begin{lemma}[\protect{\cite[Lemma 2.11.2]{hott}}]
 Let \(A:\Type\),
\(a,x_{1},x_{2}:A\) and \(p:x_{1} =_{A}^{}x_{2}\). Then 
\begin{align*}
    \mathtt{transport}^{x \mapsto (a=x)}(p)(q_1) &= q_1 \,\ct\, p \quad &\text{for } q_1 : a = x_1, \\
    \mathtt{transport}^{x \mapsto (x=a)}(p)(q_2) &= p^{-1} \,\ct\, q_2 \quad &\text{for } q_2 : x_1 = a, \\
    \mathtt{transport}^{x \mapsto (x=x)}(p)(q_3) &= p^{-1} \,\ct\, q_3 \,\ct\, p \quad &\text{for } q_3 : x_1 = x_1.
\end{align*}
\label{transport-path} \end{lemma}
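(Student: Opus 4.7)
The plan is to prove each of the three identities independently by path induction on $p : x_1 =_A x_2$, with the basepoint $x_1$ fixed and $x_2$ free. In each case the relevant type family differs: for (1) it is $P_1 :\equiv \lambda x.(a = x)$, for (2) it is $P_2 :\equiv \lambda x.(x = a)$, and for (3) it is $P_3 :\equiv \lambda x.(x = x)$. Note that in case (3) the variable $x$ occurs twice in the family, which is precisely why both ends of $q_3$ need to be transported, producing the symmetric expression $p^{-1} \ct q_3 \ct p$ rather than a one-sided concatenation.

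After path induction, it suffices in each case to replace $x_2$ by $x_1$ and $p$ by $\mathtt{refl}_{x_1}$. By the computation rule of \Cref{transport}, $\mathtt{transport}^{P_i}(\mathtt{refl}_{x_1})$ reduces judgementally to $\operatorname{id}$, so the goals become
\[
q_1 = q_1 \ct \mathtt{refl}_{x_1}, \qquad q_2 = \mathtt{refl}_{x_1}^{-1} \ct q_2, \qquad q_3 = \mathtt{refl}_{x_1}^{-1} \ct q_3 \ct \mathtt{refl}_{x_1}.
\]
Since $\mathtt{refl}_{x_1}^{-1} \equiv \mathtt{refl}_{x_1}$ judgementally (from the first lemma of \Cref{sec-inf}), these reduce to instances of the unit laws for path concatenation, which were noted in the discussion following \Cref{path-concat} as part of the $\infty$-groupoid structure of types; in particular, case (3) additionally uses associativity, which is available by the same reference. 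Each goal is thus inhabited by the inverse of the appropriate unit law, together with associativity for case (3).

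Strictly speaking there is no serious obstacle here, as all three cases are routine applications of path induction. The only mild subtlety is recognising that the required equations are not judgemental: $q \ct \mathtt{refl}_y$ does not reduce to $q$ when $q$ is a general path rather than $\mathtt{refl}$, so one must explicitly invoke the propositional unit laws rather than concluding by $\equiv$. An alternative approach for case (3) would be to derive it from (1) and (2) by transporting the two endpoints of $q_3$ in turn, but direct path induction is shorter and avoids having to commute two transport operations.
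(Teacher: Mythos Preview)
Your proof is correct and follows exactly the approach the paper intends: the paper explicitly omits the proof of this lemma under the blanket remark ``Henceforth, we shall omit proofs which only require path inductions,'' and your argument is precisely the routine path induction on $p$ followed by an appeal to the unit laws of path concatenation. Your observation that the unit laws are propositional rather than judgemental is apt and is the only point worth noting.
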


\begin{figure}[H]
    \centering
    \figTransPath
    \caption{Diagram for the first case of \Cref{transport-path}.}
    \caption*{\small The two
 dashed ellipses represent identity types \(a = x_{1}\) and
 \(a = x_{2}\).}
    \label{fig-3}
\end{figure}

\section{Homotopies and equivalences}\label{sec-homo}

The next problem is how to construct equalities between functions and
between types. In this section, we shall present two constructions,
\emph{function homotopy} and \emph{type equivalence}, based on the
identity types.

\begin{definition} Let \(A:\Type\),
\(B:A \rightarrow \Type\) and \(f,g:(a:A) \rightarrow B(a)\). A
\textbf{(function) homotopy} from \(f\) to \(g\) is a dependent function
of type
\[(f \sim g): \equiv (a:A) \rightarrow \left( f(a) =_{B(a)}^{}g(a) \right),\]
i.e., a family of pointwise equalities. \end{definition}

\begin{definition} Let \(A,B:\Type\), \(f:A \rightarrow B\). A
\textbf{quasi-inverse} of \(f\) is a triple \((g,\alpha,\beta)\), where
\(g:B \rightarrow A\),
\(\alpha:f\circ g \sim \operatorname{id}\limits_{B}\) and
\(\beta:g\circ f \sim \operatorname{id}\limits_{A}\). \end{definition}

\begin{proposition}[\protect{\cite[Section 2.4]{hott}}]
 Let \(A,B:\Type\)
and \(f:A \rightarrow B\). Define type
\[\mathtt{isequiv}(f): \equiv \left( \sum_{g:B \rightarrow A}\left( g\circ f \sim \operatorname{id}\limits_{A} \right) \right) \times \left( \sum_{h:B \rightarrow A}\left( f\circ h \sim \operatorname{id}\limits_{B} \right) \right).\]
We say `\(f\) is an \textbf{equivalence}' if \(\mathtt{isequiv}(f)\)
is inhabited. Then

\begin{enumerate}
\item
  \(f\) is an equivalence if and only if \(f\) has a quasi-inverse;
\item
  Given any \(p,q:\mathtt{isequiv}(f)\), there is a path
  \(p =_{\mathtt{isequiv}(f)}^{}q\).
\end{enumerate}

\end{proposition}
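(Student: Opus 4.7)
The plan is to handle the two parts of the proposition in turn. For part (1), the backward direction is immediate: given a quasi-inverse $(g, \alpha, \beta)$ of $f$, where $\alpha : f \circ g \sim \operatorname{id}_B$ and $\beta : g \circ f \sim \operatorname{id}_A$, the pair $((g, \beta), (g, \alpha))$ is a term of $\mathtt{isequiv}(f)$, using the same function $g$ in both factors.

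For the forward direction, suppose $((g, \alpha), (h, \beta)) : \mathtt{isequiv}(f)$ with $\alpha : g \circ f \sim \operatorname{id}_A$ and $\beta : f \circ h \sim \operatorname{id}_B$. I first show that $g$ and $h$ are pointwise equal: for each $b : B$, concatenate $\operatorname{ap}_g(\beta(b))^{-1} : g(b) = g(f(h(b)))$ with $\alpha(h(b)) : g(f(h(b))) = h(b)$ to obtain a homotopy $\gamma : g \sim h$. Then I upgrade this into a right-homotopy for $g$ itself by defining, for each $b : B$, the path $\delta(b) : \equiv \operatorname{ap}_f(\gamma(b)) \ct \beta(b) : f(g(b)) = b$. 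The triple $(g, \delta, \alpha)$ is then a quasi-inverse of $f$.

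For part (2), I aim for the stronger statement that $\mathtt{isequiv}(f)$ is in fact contractible whenever inhabited; in particular, any two of its terms are propositionally equal. Since $\mathtt{isequiv}(f)$ is a product, it suffices to show each factor is contractible. The plan is to use that, when $f$ is an equivalence, every preimage type $\sum_{a:A}(f(a) =_B b)$ indexed at $b:B$ is contractible, and to rearrange each factor of $\mathtt{isequiv}(f)$ into a dependent product of such preimages via currying.

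The main obstacle is part (2). A fully rigorous argument genuinely needs \emph{function extensionality} (to convert pointwise homotopies inside $\mathtt{isequiv}(f)$ into propositional equalities of functions) together with a characterisation of equalities in $\Sigma$-types, neither of which is yet available at this point in the dissertation. The plan is therefore to carry out part (1) in full and to sketch part (2), deferring its complete proof until after these tools have been introduced.
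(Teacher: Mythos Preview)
Your proof of part (1) is correct and is the standard argument from \cite[Section 2.4]{hott}. Your observations about part (2) are also accurate: a full proof does require function extensionality and the characterisation of paths in $\Sigma$-types, both of which appear only later in the dissertation.

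However, there is nothing to compare against: the paper does not supply its own proof of this proposition. It is stated with a citation to \cite[Section 2.4]{hott} and then used as a black box, with only the remark that (2) says $\mathtt{isequiv}(f)$ is a mere proposition and that (1) reduces the task of showing $f$ is an equivalence to exhibiting a quasi-inverse. Your write-up therefore goes beyond what the paper itself provides; if you wish to match the paper's treatment exactly, you would simply cite the HoTT book and omit the proof.
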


In the language of \Cref{sec-n-type}, (2) states that
\(\mathtt{isequiv}(f)\) is a \emph{mere proposition}. In practice, to
prove that a function \(f\) is an equivalence, by (1), we only have to
find a quasi-equivalence of \(f\). An example follows.

\begin{lemma}[\protect{\cite[Example 2.4.9]{hott}}]
 Let \(A:\Type\),
\(P:A \rightarrow \Type\), \(x,y:A\) and \(p:x =_{A}^{}y\). Then
\(\mathtt{transport}^{P}(p):P(x) \rightarrow P(y)\) is an equivalence.

\label{transport-isequiv} \end{lemma}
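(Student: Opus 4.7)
The plan is to proceed by path induction on $p$, using the fact (Lemma \ref{transport}) that transport along the reflexivity path is judgementally the identity function. By path induction, it suffices to treat the case where $y$ is $x$ and $p$ is $\mathtt{refl}_x$. In this case, $\mathtt{transport}^P(\mathtt{refl}_x) \equiv \operatorname{id}_{P(x)}$, so the problem reduces to showing that $\operatorname{id}_{P(x)}$ is an equivalence.

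To show $\operatorname{id}_{P(x)}$ is an equivalence, I would invoke the characterisation from the preceding proposition in \Cref{sec-homo}: it suffices to exhibit a quasi-inverse. Take $\operatorname{id}_{P(x)}$ itself; both composites are judgementally equal to $\operatorname{id}_{P(x)}$, so the two required homotopies are supplied by $\lambda a.\, \mathtt{refl}_a$. This completes the construction.

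No significant obstacle is expected: the whole argument is essentially a one-line path induction collapsing the claim to the trivial fact that the identity function is its own quasi-inverse. If one preferred a direct construction that does not reduce $p$ to reflexivity, the natural alternative is to take $\mathtt{transport}^P(p^{-1})$ as the quasi-inverse; the two required homotopies then follow from Proposition \ref{transport-functorality} applied to $p \ct p^{-1}$ and $p^{-1} \ct p$, together with the coherence identities $p \ct p^{-1} = \mathtt{refl}_x$ and $p^{-1} \ct p = \mathtt{refl}_y$ from \Cref{sec-inf}, after which Lemma \ref{transport} again yields the claim. Either route is routine.
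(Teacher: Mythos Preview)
Your proposal is correct. Your primary route via path induction is a genuinely different (and arguably more elementary) argument than the paper's: the paper instead takes exactly your alternative route, exhibiting \(\mathtt{transport}^{P}(p^{-1})\) as a quasi-inverse via \Cref{transport-functorality}. Your path-induction argument has the advantage of being self-contained and not relying on the functoriality lemma or the coherence identities for inverses, while the paper's approach has the mild conceptual benefit of making the quasi-inverse explicit for arbitrary \(p\) without collapsing to \(\mathtt{refl}\). Since you already sketched both, there is nothing to add.
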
 \begin{proof}By
\Cref{transport-functorality},
\(\mathtt{transport}^{P}\left( p^{- 1} \right)\) forms a quasi-inverse
with \(\mathtt{transport}^{P}(p)\).\end{proof}

\begin{definition} Let \(A,B:\Type\). Then we define the type of
\textbf{equivalences} between \(A\) and \(B\) as
\[(A \simeq B): \equiv \sum_{f:A \rightarrow B}\mathtt{isequiv}(f).\]
\end{definition}

We now characterise the identity type in certain types using the
language of equivalence:
\begin{proposition}[\protect{\cite[Proposition 1.6.8]{brunerie}}]
  Let \(A:\Type\),
\(B:A \rightarrow \Type\) and two terms
\((a,b),(a^{\prime},b^{\prime}):\sum_{x:A}B(x)\), then there is an equivalence
of types
\[\left( (a,b) =_{\sum_{x:A}B(x)}^{}(a^{\prime},b^{\prime}) \right) \simeq \left( \sum_{p:a =_{A}^{}a^{\prime}}b =_{p}^{B}b^{\prime} \right).\]
The function from right to left is written as
\(\mathtt{pair}\hspace{0pt}^{=}\), i.e., given \(p:a =_{A}^{}a^{\prime}\)
and \(\ell:b =_{p}^{B}b^{\prime}\), there is a path
\(\mathtt{pair}\hspace{0pt}^{=}(p,\ell):(a,b) = (a^{\prime},b^{\prime})\).
\label{sigma-path} \end{proposition}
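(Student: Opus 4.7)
The plan is to construct explicit maps in both directions by iterated path induction and then verify that they are quasi-inverses, invoking the proposition on quasi-inverses from \Cref{sec-homo}. The key observation underlying the whole argument is that after path induction on a path $p$ forces $p \equiv \mathtt{refl}$, the type $b =_{p}^{B} b'$ collapses judgementally to $b =_{B(a)}^{} b'$, thanks to the computation rule $(\mathtt{refl}_{a})_{\ast} \equiv \operatorname{id}_{B(a)}$ from \Cref{transport}.

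For the backward map $\mathtt{pair}^{=}$, I would fix $A$, $B$, $a$, $b$ and seek a dependent function of type $(a' : A) \to \left( p : a =_{A}^{} a' \right) \to \left( b' : B(a') \right) \to \left( b =_{p}^{B} b' \right) \to \left( (a,b) =_{\sum_{x:A}B(x)}^{} (a',b') \right)$. Applying path induction to $p$ (with $a$ fixed and $a'$ free) reduces the task to supplying, for each $b' : B(a)$, a map $(b =_{B(a)}^{} b') \to ((a,b) = (a,b'))$. A second path induction on this equality (with $b$ fixed and $b'$ free) reduces the goal to producing $\mathtt{refl}_{(a,b)}$, which is immediate.

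For the forward map, given $q : (a,b) =_{\sum_{x:A}B(x)}^{} (a',b')$, I perform path induction on $q$ with $(a,b)$ fixed. This collapses the problem to the case $q \equiv \mathtt{refl}_{(a,b)}$, where it suffices to exhibit an inhabitant of $\sum_{p : a =_{A}^{} a} b =_{p}^{B} b$; the pair $(\mathtt{refl}_{a}, \mathtt{refl}_{b})$ works, well-typed by the same computation rule. To check that the two composites are homotopic to the identity, I apply further path inductions: inducting on $q$ reduces the round trip to $\mathtt{refl}_{(a,b)} \mapsto \mathtt{refl}_{(a,b)}$, which is judgementally equal by the computation rules of path induction; inducting on $p$ and then on $\ell$ reduces $(p, \ell)$ and its round trip to $(\mathtt{refl}_{a}, \mathtt{refl}_{b})$, again returned on the nose.

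The main subtlety lies in the ordering of the inductions in the backward direction: one must eliminate $p$ first so that $\ell : b =_{p}^{B} b'$ simplifies to an ordinary identity $b =_{B(a)}^{} b'$, before the standard path induction of \Cref{sec-id-type} (with one endpoint fixed and the other free) can be applied to $\ell$ with $b'$ as the free endpoint. Attempting induction on $\ell$ before $p$, or fixing the wrong endpoint, would leave a transported source that path induction cannot directly eliminate.
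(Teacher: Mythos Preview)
The paper does not give its own proof of this proposition; it merely states the result with a citation to \cite[Proposition~1.6.8]{brunerie}. Your argument is the standard one and is correct: iterated path induction to build $\mathtt{pair}^{=}$ and its inverse, followed by further path inductions to verify the two round-trip homotopies, with your observation about the ordering of inductions being exactly the right point to flag.

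One small technicality you glossed over: when you perform path induction on $q:(a,b)=(a',b')$ with $(a,b)$ fixed, the free endpoint must range over the whole type $\sum_{x:A}B(x)$, not just over pairs. Formally one first defines the forward map at a general $w:\sum_{x:A}B(x)$ with target $\sum_{p:a=\pr_1 w}\,b=_p^B \pr_2 w$, applies path induction, and then instantiates $w$ at $(a',b')$; the computation rules $\pr_1(a',b')\equiv a'$ and $\pr_2(a',b')\equiv b'$ make the target type come out right. This is routine and does not affect the validity of your outline.
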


\section{Univalence axiom and function extensionality}\label{sec-ua}

Given \(A,B:\Type\), it is natural to wonder what is the
relationship between type equality \(A =_{\Type}^{}B\) and type
equivalence \(A \simeq B\). It turns out that we can derive the latter
from the former:

\begin{proposition}[\protect{\cite[Lemma 2.10.1]{hott}}]
 Let
\(A,B:\Type\). Then there is a function
\(\mathtt{idtoeqv}:\left( A =_{\Type}^{}B \right) \rightarrow (A \simeq B)\).

\end{proposition}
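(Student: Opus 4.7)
The plan is to construct $\mathtt{idtoeqv}$ by path induction on $p : A =_{\Type} B$, producing an equivalence $A \simeq B$ for each such $p$. Recall that an element of $A \simeq B$ is a dependent pair $(f, e)$ where $f : A \to B$ and $e : \mathtt{isequiv}(f)$, so the task reduces to exhibiting a function $A \to B$ together with evidence that it is an equivalence.

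First I would observe that we already have a natural candidate for the underlying function, namely the transport along $p$ in the identity family $\mathtt{id}_{\Type} : \Type \to \Type$ given by $X \mapsto X$. Applying Lemma \ref{transport} to the family $P :\equiv \lambda X. X$, we obtain a function $\mathtt{transport}^{\mathtt{id}_{\Type}}(p) : A \to B$ for every $p : A =_{\Type} B$. By Lemma \ref{transport-isequiv}, this transport is always an equivalence, since $\mathtt{transport}^{\mathtt{id}_{\Type}}(p^{-1})$ serves as a quasi-inverse via Proposition \ref{transport-functorality}. Packaging these together yields the desired dependent pair in $A \simeq B$, and we define
\[\mathtt{idtoeqv}(p) :\equiv \bigl(\mathtt{transport}^{\mathtt{id}_{\Type}}(p), \, e_p\bigr),\]
where $e_p$ is the witness of equivalence produced by Lemma \ref{transport-isequiv}.

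An equivalent and perhaps more conceptually transparent route is direct path induction: to construct a dependent function sending $p : A =_{\Type} B$ to a term of $A \simeq B$, it suffices by the induction principle for identity types to handle the case $B \equiv A$ and $p \equiv \mathtt{refl}_A$. In this case we produce the identity equivalence, taking $f :\equiv \mathrm{id}_A$ together with the quasi-inverse $(\mathrm{id}_A, \lambda a.\mathtt{refl}_a, \lambda a.\mathtt{refl}_a)$, which yields a term of $\mathtt{isequiv}(\mathrm{id}_A)$ by the first clause of the preceding proposition. Both constructions agree judgementally on $\mathtt{refl}_A$, sending it to the identity equivalence of $A$.

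There is no real obstacle here: the lemma is a routine application of path induction combined with already-established facts. The only mild subtlety is being explicit about the type family used for transport — it must be the identity family on the universe $\Type$ itself — which is why this construction crucially relies on types being terms of $\Type$. This observation is precisely what makes the question of whether $\mathtt{idtoeqv}$ admits an inverse nontrivial, and sets the stage for the univalence axiom to follow.
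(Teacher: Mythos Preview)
Your proposal is correct and your first route is essentially identical to the paper's proof: define $\mathtt{idtoeqv}(p)$ as $\mathtt{transport}^{\mathrm{id}_{\Type}}(p)$ and invoke Lemma~\ref{transport-isequiv} to witness that this is an equivalence. The alternative direct path-induction route you sketch is also fine and agrees with the transport construction on $\mathtt{refl}$, so there is nothing to add.
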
 \begin{proof} Let \(p:A =_{\Type}^{}B\). The identity function
\(\operatorname{id}\limits_{\Type} \equiv \lambda X.X:\Type \rightarrow \Type\)
can be regarded as a dependent type, so by \Cref{transport}, we have a
function
\(\mathtt{idtoeqv}(p): \equiv \mathtt{transport}^{\operatorname{id}\limits_{\Type}}(p):A \rightarrow B\),
which is an equivalence by \Cref{transport-isequiv}. \end{proof}

Desirable as it is, we however cannot go the other way around with what
we currently have. Our wish is instead formulated as an axiom:
\begin{axiom}[Univalence] Let \(A,B:\Type\). Then
\(\mathtt{idtoeqv}:\left( A =_{\Type}^{}B \right) \rightarrow (A \simeq B)\)
is an equivalence. \end{axiom}

\begin{remark} The \emph{axiom} means that we assume the type
\(\mathtt{isequiv}(\mathtt{idtoeqv})\) is inhabited without
constructing its term. \end{remark} In particular, this implies that there is a
function
\[\mathtt{ua}:(A \simeq B) \rightarrow \left( A =_{\Type}^{}B \right), \]
which forms a quasi-inverse of \(\mathtt{idtoeqv}\).
By abuse of notation, we write
\(\mathtt{ua}(f):\left( A =_{\Type}^{}B \right)\) if
\(f:A \rightarrow B\) is an equivalence. In practice, to find a path
between two types \(A\) and \(B\), it suffices to give a function
\(f:A \rightarrow B\) and show that \(f\) is an equivalence.

A similar situation arises with function equality and function homotopy.

\begin{proposition} Let \(A:\Type\),
\(B:A \rightarrow \Type\) and \(f,g:(a:A) \rightarrow B(a)\).
Then there is a function
\(\mathtt{happly}:\left( f =_{(a:A) \rightarrow B(a)}^{}g \right) \rightarrow (f \sim g)\).
\end{proposition}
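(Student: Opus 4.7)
The plan is to construct $\mathtt{happly}$ by direct path induction on the given equality of dependent functions. Fix $A$, $B$ and $f$, and consider the type family indexed by $g : (a:A) \to B(a)$ and $p : f =_{(a:A) \to B(a)} g$ whose value at $(g,p)$ is the homotopy type $f \sim g$, i.e., $(a:A) \to (f(a) =_{B(a)} g(a))$. By path induction based at $f$, it suffices to exhibit a term of this family at the pair $\bigl(f, \mathtt{refl}_f\bigr)$, that is, a homotopy $f \sim f$. Such a homotopy is given by $\lambda a.\, \mathtt{refl}_{f(a)}$, which defines $\mathtt{happly}\bigl(\mathtt{refl}_f\bigr) :\equiv \lambda a.\, \mathtt{refl}_{f(a)}$ and extends to a dependent function $\mathtt{happly} : \bigl(f =_{(a:A) \to B(a)} g\bigr) \to (f \sim g)$ by the induction principle.

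An equivalent and more conceptual route, which I would mention as an aside, is to use \Cref{ap} together with pointwise evaluation. For each $a:A$, the evaluation map $\mathtt{ev}_a : \bigl((x:A) \to B(x)\bigr) \to B(a)$ defined by $h \mapsto h(a)$ has fixed codomain $B(a)$, so it is a non-dependent function to which $\mathtt{ap}$ applies. Given $p : f =_{(a:A) \to B(a)} g$, one then sets $\mathtt{happly}(p) :\equiv \lambda a.\, \mathtt{ap}_{\mathtt{ev}_a}(p)$, which by \Cref{ap} produces a term of $f(a) =_{B(a)} g(a)$ for each $a:A$ and reduces to $\lambda a.\, \mathtt{refl}_{f(a)}$ when $p \equiv \mathtt{refl}_f$, agreeing judgementally with the path-induction construction.

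There is essentially no obstacle: the construction is a one-line path induction, and the only point to be mindful of is that path induction must be performed with the endpoint $g$ left free (and $f$ fixed), so that we are not implicitly invoking Axiom K. The reverse implication, converting a pointwise homotopy back into a genuine equality of functions, is the genuinely non-trivial direction, and that is precisely the content of the function extensionality principle, which would naturally come next in the exposition after this proposition and in parallel with the univalence axiom for $\Type$.
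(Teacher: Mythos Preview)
Your proposal is correct. Interestingly, what you present as an aside---defining $\mathtt{happly}(p)(a) :\equiv \mathtt{ap}_{\lambda h.h(a)}(p)$ via the evaluation map---is exactly the paper's proof, while your primary route via direct path induction is the alternative the paper does not spell out. Both constructions are standard and, as you observe, agree judgementally on $\mathtt{refl}_f$; the path-induction version is arguably more self-contained (it does not invoke \Cref{ap}), whereas the $\mathtt{ap}$-based version makes the functoriality of evaluation explicit and matches how $\mathtt{happly}$ is later used alongside $\mathtt{ap}$ in the paper (e.g., in the proof of \Cref{h-space-s1}). Your remarks about keeping $g$ free and about function extensionality being the non-trivial converse are spot on and align with the paper's exposition.
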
 \begin{proof} Let
\(p:\left( f =_{(a:A) \rightarrow B(a)}^{}g \right)\). For any \(a:A\),
the evaluation function
\(\lambda h.h(a):\left( (a:A) \rightarrow B(a) \right) \rightarrow B(a)\)
is a non-dependent one, so we can apply \Cref{ap} and get
\(\mathtt{happly}(p)(a): \equiv \mathtt{ap}_{\lambda h.h(a)}(p):\left( f(a) =_{B(a)}^{}g(a) \right).\)
\end{proof} The next theorem can be proved with univalence \protect{\cite[Section 4.9]{hott}}. 
\begin{theorem}[Function extensionality]
  The function
  \(\mathtt{happly}\) is an equivalence.
\end{theorem}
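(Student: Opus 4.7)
The plan is to derive function extensionality from the univalence axiom in three steps: first use univalence to show that post-composition with an equivalence is itself an equivalence on (dependent) function types; then deduce \emph{weak function extensionality}, that a dependent product of contractible types is contractible; and finally use this to prove that $\mathtt{happly}$ is an equivalence.

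For step one, given an equivalence $e : B \simeq B'$ and a type $A$, I apply $\mathtt{ua}$ to obtain a path $\mathtt{ua}(e) : B =_{\Type} B'$. By \Cref{transport-isequiv}, transporting along this path in the family $X \mapsto (A \to X)$ yields an equivalence $(A \to B) \simeq (A \to B')$, and by the constant-domain case of \Cref{function-identity} together with the univalence computation rule $\mathtt{idtoeqv}(\mathtt{ua}(e)) = e$, this transport is homotopic to the post-composition map $f \mapsto e \circ f$. An analogous argument handles dependent function types.

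For weak function extensionality, suppose $P : A \to \Type$ has each $P(x)$ contractible. Then the first projection $\mathtt{pr}_1 : \sum_{x:A} P(x) \to A$ is an equivalence, so by step one, post-composition with $\mathtt{pr}_1$ gives an equivalence $\bigl( A \to \sum_{x:A} P(x) \bigr) \simeq (A \to A)$. The fibre of this equivalence over $\operatorname{id}_A$ is precisely the type of sections $(x:A) \to P(x)$, and since fibres of equivalences are contractible, we conclude that $(x:A) \to P(x)$ is itself contractible.

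For the final step, fix $f : (x:A) \to B(x)$ and consider the total spaces $T_{=} \equiv \sum_{g} (f = g)$ and $T_{\sim} \equiv \sum_{g} (f \sim g)$. The first is a based path space, hence contractible. For the second, step one provides an equivalence $T_{\sim} \simeq (x:A) \to \sum_{y : B(x)} (f(x) = y)$; each summand on the right is a contractible based path space, so weak function extensionality forces $T_{\sim}$ to be contractible. The map $(g, p) \mapsto (g, \mathtt{happly}(p))$ sends one contractible type to the other preserving the distinguished point $(f, \mathtt{refl}_{f})$, hence is an equivalence; as it is fibered over $g$, it induces an equivalence on each fibre $(f = g) \to (f \sim g)$, which is exactly $\mathtt{happly}$. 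The main obstacle is step one, where one must verify that transport along $\mathtt{ua}(e)$ in the function-type family genuinely computes to post-composition with $e$; this relies on the univalence computation rule, which is not made explicit in the preceding material.
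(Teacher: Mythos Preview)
The paper does not actually prove this theorem: it simply remarks, just before the statement, that it ``can be proved with univalence'' and cites \cite[Section 4.9]{hott}. Your three-step outline (post-composition with an equivalence is an equivalence; weak function extensionality; total-space argument for \(\mathtt{happly}\)) is exactly the argument given in that reference, so your approach coincides with what the paper defers to.

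One small technical correction in your second step: the fibre of post-composition by \(\pr_1\) over \(\operatorname{id}_A\) is not \emph{literally} the dependent function type \((x:A)\to P(x)\) without already having some form of function extensionality to identify a map \(g:A\to\sum_{x:A}P(x)\) with its component functions. The standard argument instead exhibits \((x:A)\to P(x)\) as a \emph{retract} of that fibre (the section is \(f\mapsto(\lambda x.(x,f(x)),\mathtt{refl})\), the retraction is \((g,p)\mapsto \lambda x.\,p_\ast(\pr_2(g(x)))\)), and retracts of contractible types are contractible. This is a minor repair and does not affect the soundness of your strategy.
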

   This implies that there is
a function
\[\mathtt{funext}:\left( (a:A) \rightarrow \left( f(a) =_{B(a)}^{}g(a) \right) \right) \rightarrow \left( f =_{(a:A) \rightarrow B(a)}^{}g \right),\]
which forms a quasi-inverse of \(\mathtt{happly}\). In practice, to
give an equality between two functions \(f\) and \(g\), it suffices to
give a family of pointwise equalities.

\section{Higher inductive types}\label{sec-hit}

In classical dependent type theory, any constructor for an inductive
type \(A\) should produce a \emph{term} of \(A\). This is not the case
for \textbf{higher inductive types} (\textbf{HIT}), where \textbf{path
constructors} are allowed which produce \emph{paths}, or `customised
equalities', between points of \(A\). We now introduce two HITs, the
circle and pushouts.

\subsection{Circle}\label{sec-circle}

The circle \({\mathbb{S}}^{1}\) is the HIT \emph{generated} by a
\emph{(point) constructor} \(\mathtt{base}:{\mathbb{S}}^{1}\) and a
\emph{path constructor}
\(\mathtt{loop}:\mathtt{base} =_{{\mathbb{S}}^{1}}^{}\mathtt{base}\).
The wording here is `generated', in the sense that by coherence
operations, from the constructor \(\mathtt{loop}\) we automatically
obtain paths such as \(\mathtt{loop}^{- 1}\) and
\(\mathtt{loop}\ct\mathtt{loop}\ct\mathtt{loop}\).
Note that \({\mathbb{S}}^{1}\) is non-trivial, in the sense that
\(\mathtt{loop} \neq \mathtt{refl}_{\mathtt{base}}\) \footnote{Note that this means the type \((\mathtt{loop} = \mathtt{refl}_\mathtt{base}) \to \mathbb{0}\) is inhabited.}\protect{\cite[Lemma 6.4.1]{hott}}.

The induction principle states that given

\begin{itemize}
\item
  a dependent type \(P:{\mathbb{S}}^{1} \rightarrow \Type\),
\item
  a term \(b:P\left( \mathtt{base} \right)\) and
\item
  a dependent path \(\ell:b =_{\mathtt{loop}}^{P}b\),
\end{itemize}
there is a dependent function
\(f:\left( x:{\mathbb{S}}^{1} \right) \rightarrow P(x)\) such that
\(f\left( \mathtt{base} \right): \equiv b\) and
\(\mathtt{apd}_{f}\left( \mathtt{loop} \right) := \ell\) \footnote{Here
  the computation rule for the path is a propositional equality instead
  of a judgemental one; see \protect{\cite[Section 6.2]{hott}}.}.

\begin{remark}[\protect{\cite[Remark 6.2.4]{hott}}]
 When informally applying
circle induction on \(x:{\mathbb{S}}^{1}\) given
\(P:{\mathbb{S}}^{1} \rightarrow \Type\), we may use expressions
`when \(x\) is \(\mathtt{base}\)' and `when \(x\) varies along
\(\mathtt{loop}\)' to introduce the constructions of \(b\) and
\(\ell\) as above. 
\end{remark}

The recursion principle states that given
\begin{itemize}
\item
  \(B:\Type\),
\item
  \(b:B\) and
\item
  \(p:b =_{B}^{}b\),
\end{itemize}
there is a function \(f:{\mathbb{S}}^{1} \rightarrow B\) such that
\(f\left( \mathtt{base} \right): \equiv b\) and
\(\mathtt{ap}_{f}\left( \mathtt{loop} \right) := p\) \protect{\cite[Lemma 6.2.5]{hott}}.

\subsection{Pushouts}\label{sec-pushout}

Let \(A,B,C:\Type\), \(f:C \rightarrow A\) and
\(g:C \rightarrow B\). Then we get a diagram
\(A\overset{f}{\leftarrow}C\overset{g}{\rightarrow}B\), which we call a
\textbf{span}. The \textbf{pushout} \(A \sqcup^{C}B\) of this span is
the HIT generated by two point constructors
\(\mathtt{inl}:A \rightarrow A \sqcup^{C}B\) and
\(\mathtt{inr}:B \rightarrow A \sqcup^{C}B\) and a path constructor
\[\mathtt{glue}:(c:C) \rightarrow \left( \mathtt{inl}(f(c)) =_{A \sqcup^{C}B}^{}\mathtt{inr}(g(c)) \right),\]
as visualised by \Cref{fig-pushout}.

The induction principle states that given

\begin{itemize}
\item
  a dependent type \(P:A \sqcup^{C}B \rightarrow \Type\),
\item
  two dependent functions
  \(h_{\mathtt{inl}}:(a:A) \rightarrow P\left( \mathtt{inl}(a) \right)\)
  and
\item
  \(h_{\mathtt{inr}}:(b:B) \rightarrow P\left( \mathtt{inr}(b) \right)\),
  and
\item
  a family of dependent paths
  \[h_{\mathtt{glue}}:(c:C) \rightarrow \left( h_{\mathtt{inl}}\left( f(c) \right) =_{\mathtt{glue}(c)}^{P}h_{\mathtt{inr}}\left( g(c) \right) \right),\]
\end{itemize}
there is a dependent function
\(h:\left( x:A \sqcup^{C}B \right) \rightarrow P(x)\) such that
\(h\left( \mathtt{inl}(a) \right): \equiv h_{\mathtt{inl}}(a)\),
\(h\left( \mathtt{inr}(b) \right): \equiv h_{\mathtt{inr}}(b)\) and
\(\mathtt{apd}_{h}\left( \mathtt{glue}(c) \right) := h_{\mathtt{glue}}(c)\).

\begin{figure}
    \centering
    \figPoOne
    \caption{Diagram for the pushout \(A \sqcup^{C}B\).}
    \caption*{The two dashed circles represent
 the `image' of \(A\) and \(B\) under \(\mathtt{inl}\) and
\(\mathtt{inr}\), respectively.}
    \label{fig-pushout}
\end{figure}

By specialising the span, we can obtain many interesting constructions
from the pushout.

\begin{enumerate}
\item
  Let \(A:\Type\). The \textbf{suspension} \(\Sigma A\) of \(A\)
  is the pushout of
  \[\mathbb{1}_{N} \leftarrow A \rightarrow \mathbb{1}_{S}.\]
We denote \(\mathtt{north}: \equiv \mathtt{inl}( \star_{N})\),
\(\mathtt{south}: \equiv \mathtt{inr}( \star_{S})\), and for any
\(a:A\),
\(\mathtt{merid}(a): \equiv \mathtt{glue}(a):\mathtt{north} =_{\Sigma A}^{}\mathtt{south}\).
\(\Sigma A\) can be seen as two points ('north and south poles')
connected by a family of paths ('meridians') indexed by the terms of
\(A\).
\item
  Let \(A,B:\Type\). The \textbf{join} \(A \ast B\) of \(A\) and
  \(B\) is the pushout of
  \[A\overset{\mathtt{fst}}{\leftarrow}A \times B\overset{\mathtt{snd}}{\rightarrow}B.\]

\item
  Let \(\left( A, \star_{A} \right)\) and
  \(\left( B, \star_{B} \right)\) be two pointed types. The
  \textbf{wedge sum} \(A \vee B\) of \(A\) and \(B\) is the pushout of
  \[A\overset{\star_{A}}{\leftarrow}\mathbb{1}\overset{\star_{B}}{\rightarrow}B.\]
\end{enumerate}

With suspension, we define the \textbf{\(n\)-sphere}
\({\mathbb{S}}^{n}\) by induction on \(n \geq 0\):
\({\mathbb{S}}^{0}: \equiv \mathbb{2}\) and
\({\mathbb{S}}^{n + 1}: \equiv \Sigma{\mathbb{S}}^{n}\). Note that
\({\mathbb{S}}^{1} \simeq \Sigma\mathbb{2}\) \protect{\cite[Lemma 6.5.1]{hott}},
where \({\mathbb{S}}^{1}\) is in the sense of \Cref{sec-circle}.

For any type \(X\), we define its \textbf{pointed suspension} as
\(\left( \Sigma X,\mathtt{north} \right).\) This turns
\({\mathbb{S}}^{n}\) into pointed types for \(n \geq 1\). For \(n = 0\),
we view \({\mathbb{S}}^{0} \equiv \mathbb{2}\) as \(\mathbb{1}_{+}\). We
write the basepoint of \({\mathbb{S}}^{n}\) as \(\mathtt{base}\).

\chapter{\texorpdfstring{Homotopy \(n\)-types and \(n\)-connected
types}{Homotopy n-types and n-connected types}}\label{sec-n-types}

\Cref{sec-contr} discusses \emph{contractible types}, `trivial types' that
are equivalent to \(\mathbb{1}\). \Cref{sec-n-type} generalises contractible
types to (\emph{homotopy) \(n\)-types}: types with trivial `homotopy
information' in any dimension greater than \(n\) (as in
\Cref{omega-contractible}). \Cref{sec-trun} discusses how to truncate any given
type into an \(n\)-type. Then our tools at hand will allow us to define
and calculate the homotopy groups of the circle in \Cref{sec-algebra}.
Finally, \Cref{sec-connect} focuses on the `dual' of \(n\)-types,
\emph{\(n\)-connected types}, which have trivial `homotopy information'
in any dimension less than or equal to \(n\).

\section{Contractible types}\label{sec-contr}

\begin{definition} For \(A:\Type\), we define the predicate
\[\mathtt{is\text{-}contr}(A): \equiv \sum_{a:A}\left( (x:A) \rightarrow \left( a =_{A}^{}x \right) \right)\]
which reads `\(A\) is \textbf{contractible}', and we say \(a\) is the
\textbf{centre of contraction}. \end{definition}

Clearly, \(\mathbb{1}\) is a contractible type. In fact, \(A\) is
contractible if and only if \(A \simeq \mathbb{1}\) \protect{\cite[Lemma 3.11.3]{hott}}. Informally, contractibility means having trivial `homotopy
information' in all dimensions (observe that
\(\Omega^{n}\left( \mathbb{1} \right) \simeq \mathbb{1}\) for all
\(n:{\mathbb{N}}\)). We first give a non-example.

\begin{example}\redDiamond \({\mathbb{S}}^{1}\) is not
contractible. Otherwise, taking 	\texttt{base} as the
centre of contraction, we have to prove
\(\left( x:{\mathbb{S}}^{1} \right) \rightarrow \left( \mathtt{base} = x \right)\).
By circle induction, when \(x\) is 	\texttt{base}, we
need \(p:\mathtt{base} = \mathtt{base}\). When \(x\) `varies along'
	\texttt{loop}, we need
\(\mathtt{loop}_{\ast}(p) = p\). By \Cref{transport-path},
\(\mathtt{loop}_{\ast}(p) = p\ct\mathtt{loop}\), so by
coherence operations, we need
\(\mathtt{loop} = \mathtt{refl}_{\mathtt{base}}\), but this would give us absurdity by the non-triviality of \({\mathbb{S}}^{1}\) \protect{\cite[Lemma 6.4.1]{hott}}. \label{s1-not-1} \end{example}

As mentioned earlier, the space of all paths from a fixed point is
contractible:

\begin{proposition} Let \(X:\Type\) and \(a:X\). Then the type
\(\sum_{x:X}\left( a =_{X}^{}x \right)\) is contractible. \label{sigma-id-contractible} \end{proposition}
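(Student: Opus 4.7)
The plan is to exhibit an explicit center of contraction, namely the pair $(a, \mathtt{refl}_a) : \sum_{x:X}(a =_X x)$, and then to construct the required dependent function
\[
\bigl(p : \sum_{x:X}(a =_X x)\bigr) \to \bigl((a, \mathtt{refl}_a) =_{\sum_{x:X}(a =_X x)} p\bigr).
\]
By the induction principle for $\Sigma$-types, it suffices to construct this path for $p$ of the form $(x, q)$ where $x : X$ and $q : a =_X x$ are both free.

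The key observation is that one endpoint of $q$, namely $x$, is free, so path induction applies. I would therefore apply path induction on $q$, reducing to the case $x \equiv a$ and $q \equiv \mathtt{refl}_a$. In that case the goal becomes $(a, \mathtt{refl}_a) = (a, \mathtt{refl}_a)$, inhabited by $\mathtt{refl}_{(a, \mathtt{refl}_a)}$.

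If one prefers to be more explicit about the $\Sigma$-equality, I would instead invoke \Cref{sigma-path}: to produce an element of $(a, \mathtt{refl}_a) = (x, q)$, it is enough to give $r : a =_X x$ together with $\ell : \mathtt{transport}^{\lambda y.(a = y)}(r)(\mathtt{refl}_a) =_{a = x} q$. Under path induction on $q$, take $r :\equiv \mathtt{refl}_a$; by the computation rule $\mathtt{transport}^{P}(\mathtt{refl}_a)$ is the identity on $P(a)$, so $\ell :\equiv \mathtt{refl}_{\mathtt{refl}_a}$ works, and $\mathtt{pair}^{=}(r, \ell)$ gives the required path.

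The only conceptual point to watch is that path induction genuinely requires the other endpoint $x$ to remain free throughout; if one tried to fix $x$ in advance one would in effect be invoking Axiom K. There are no hard calculations, and no appeal to univalence or function extensionality is needed: the whole argument is a single path induction once the correct center of contraction is guessed.
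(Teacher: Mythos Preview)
Your proof is correct and uses the same centre of contraction and the same reduction via \Cref{sigma-path} as the paper. The only difference is cosmetic: where you invoke path induction on $q$ to reduce to the reflexivity case, the paper instead takes $r :\equiv p$ directly and obtains $\ell : p_*(\mathtt{refl}_a) = p$ by citing \Cref{transport-path} (the computation $\mathtt{transport}^{x\mapsto(a=x)}(p)(\mathtt{refl}_a) = \mathtt{refl}_a \ct p = p$), so no explicit path induction is performed at this step.
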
 \begin{proof} The centre of
contraction is given by \(\left( a,\mathtt{refl}_{a} \right)\). For
any \((x,p):\sum_{x:X}\left( a =_{X}^{}x \right)\), we have \(p:a = x\)
and \(\ell:p_{\ast}\left( \mathtt{refl}_{a} \right) = p\) by
\Cref{transport-path}. Then we have
\(\mathtt{pair}\hspace{0pt}^{=}(p,\ell):\left( a,\mathtt{refl}_{a} \right) = (x,p)\)
by \Cref{sigma-path}. \end{proof} \redDiamond This lends us some further
insights into path induction, as in the two following corollaries.
\begin{corollary}[Path induction, strengthened]
  Let
\(A:\Type\), \(a:A\), and
\(P:(x:A) \rightarrow \left( a =_{A}^{}x \right) \rightarrow \Type\).
Then for any \(y:A\), \(q:a =_{A}^{}y\) and \(d:P(y)(q)\), there is a
function
\(f:(x:A) \rightarrow \left( p:\left( a =_{A}^{}x \right) \right) \rightarrow P(x)(p)\).
\label{strong-path-induction} \end{corollary}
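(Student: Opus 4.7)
The plan is to reduce the strengthened statement to ordinary path induction by exploiting the contractibility of $\sum_{x:A}(a =_A x)$ just established in \Cref{sigma-id-contractible}. Intuitively, the ordinary path induction requires input data at the canonical point $(a, \mathtt{refl}_a)$, but since every point of $\sum_{x:A}(a=_A x)$ is equal to this centre, supplying data at any specific pair $(y,q)$ is equally good: we can transport it to $(a, \mathtt{refl}_a)$ and then invoke the usual principle.

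Concretely, I would first uncurry $P$ to obtain a dependent type $\widetilde{P}:\left(\sum_{x:A}(a=_A x)\right) \to \Type$ defined by $\widetilde{P}((x,p)) :\equiv P(x)(p)$. The given datum $d:P(y)(q)$ is then a term of $\widetilde{P}((y,q))$. By \Cref{sigma-id-contractible}, $\sum_{x:A}(a=_A x)$ is contractible with centre $(a,\mathtt{refl}_a)$, so there is a path $\alpha:(a,\mathtt{refl}_a) =_{\sum_{x:A}(a=_A x)} (y,q)$. Applying \Cref{transport} to $\widetilde{P}$ along $\alpha^{-1}$ yields
\[d' :\equiv \mathtt{transport}^{\widetilde{P}}(\alpha^{-1})(d) : \widetilde{P}((a,\mathtt{refl}_a)) \equiv P(a)(\mathtt{refl}_a).\]

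With $d':P(a)(\mathtt{refl}_a)$ in hand, the ordinary path induction $J_P(d')$ immediately produces the desired dependent function $f:(x:A)\to(p:a=_A x)\to P(x)(p)$, completing the proof. There is no serious obstacle here; the only point requiring minor care is the bookkeeping of uncurrying $P$ so that transport in the total space $\sum_{x:A}(a=_A x)$ corresponds to simultaneously varying both $x$ and the path $p$. Note also that, in contrast to the standard path induction, we cannot expect a judgemental computation rule pinning down $f(y)(q)$; the statement asserts only the existence of $f$, which is all that the contractibility-based reduction can guarantee.
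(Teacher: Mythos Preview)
Your proposal is correct and follows essentially the same idea as the paper: uncurry $P$ to a family over the contractible type $\sum_{x:A}(a=_A x)$ and exploit that contractibility to produce the section. The only cosmetic difference is that the paper invokes the induction principle of $\mathbb{1}$ directly on the contractible total space (data at \emph{any} point yields a section), whereas you first transport $d$ to the centre $(a,\mathtt{refl}_a)$ and then apply ordinary $J$; these amount to the same thing.
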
 \begin{proof} Let
\(\hat{P}:\sum_{x:A}\left( a =_{A}^{}x \right) \rightarrow \Type\)
be the uncurried form of \(P\). Then by the induction principle of
\(\mathbb{1}\), it suffices to take any
\((y,q):\sum_{x:A}\left( a =_{A}^{}x \right)\) and
\(d:\hat{P}(y,q) \equiv P(y)(q)\) to obtain
\(\hat{f}:\left( (x,p):\sum_{x:A}\left( a =_{A}^{}x \right) \right) \rightarrow \hat{P}(x,p)\).
The desired \(f\) is the curried form of \(\hat{f}\). \end{proof} \begin{remark} In
the usual path induction, \(y\) and \(q\) should be \(a\) and
\(\mathtt{refl}_{a}\) respectively. \end{remark} We may also use
	\texttt{transport} to express functions defined using
path induction: \begin{corollary} Let \(A:\Type\), \(a,x:A\),
\(p:\left( a =_{A}^{}x \right)\),
\(P:(x:A) \rightarrow \left( a =_{A}^{}x \right) \rightarrow \Type\)
and \(d:P(a)\left( \mathtt{refl}_{a} \right)\). Then
\[J_{P}(d)(x)(p) =_{P(x)(p)}^{}\mathtt{transport}^{\hat{P}}\left( \mathtt{pair}\hspace{0pt}^{=}(p,\ell) \right)(d),\]
where \(\ell:p_{\ast}\left( \mathtt{refl}_{a} \right) = p\) by
\Cref{transport-path}, \(J\) is defined in \Cref{sec-id-type} and
\(\hat{P}:\sum_{x:A}\left( a =_{A}^{}x \right) \rightarrow \Type\)
is the uncurried form of \(P\). \label{transport-path-induction} \end{corollary}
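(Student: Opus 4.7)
The plan is to reduce everything to the reflexivity case by the standard path induction, and then check that the two sides agree judgementally.

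First I would apply path induction on $p:(a =_A x)$, fixing $a$ and letting both $x$ and $p$ vary. By the usual path induction (or equivalently \Cref{strong-path-induction}), it suffices to prove the statement in the single case where $x$ is $a$ and $p$ is $\mathtt{refl}_a$. In this specialised case, the left-hand side is immediate: by the computation rule for $J$ stated in \Cref{sec-id-type}, we have the judgemental equality
\[
J_P(d)(a)(\mathtt{refl}_a) \equiv d.
\]

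For the right-hand side, I would unfold the data. The path $\ell : (\mathtt{refl}_a)_\ast(\mathtt{refl}_a) = \mathtt{refl}_a$ is produced by the first case of \Cref{transport-path}, whose own proof is by path induction on $p$; specialised to $p \equiv \mathtt{refl}_a$ and $q_1 \equiv \mathtt{refl}_a$, this produces $\ell \equiv \mathtt{refl}_{\mathtt{refl}_a}$. Consequently $\mathtt{pair}^{=}(\mathtt{refl}_a, \mathtt{refl}_{\mathtt{refl}_a})$ is, by the analogous computation rule for $\mathtt{pair}^{=}$ in \Cref{sigma-path} (which is also proved by path induction and sends the pair of reflexivities to a reflexivity), equal to $\mathtt{refl}_{(a,\mathtt{refl}_a)}$ in $\sum_{x:A}(a =_A x)$. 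Transporting $d$ along a reflexivity path via $\hat{P}$ then yields $d$ itself, by the computation rule in \Cref{transport}. Hence both sides reduce to $d$, and I can give $\mathtt{refl}_d$ as the required equality.

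The main obstacle is purely bookkeeping: making sure the path $\ell$ and the pair equality $\mathtt{pair}^{=}(p,\ell)$ really do compute to reflexivities when $p \equiv \mathtt{refl}_a$, rather than only to propositional equalities that would still need to be transported across. Since both \Cref{transport-path} and \Cref{sigma-path} are proven by path induction and the corresponding computation rules are judgemental on the $\mathtt{refl}$ case, everything collapses cleanly once the initial path induction on $p$ is performed. No further coherence work is required.
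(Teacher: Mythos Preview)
Your proposal is correct and follows essentially the same approach as the paper: the paper's proof is the single sentence ``By path induction, assume \(x\) is \(a\) and \(p\) is \(\mathtt{refl}_a\), then both sides are \(d\).'' Your version simply unpacks the right-hand side more carefully, and your observation that only a propositional equality is needed (so even if \(\ell\) or \(\mathtt{pair}^{=}\) reduced only propositionally to reflexivities, the argument would still go through) is a nice piece of extra hygiene.
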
 \begin{proof}By path
induction, assume \(x\) is \(a\) and \(p\) is \(\mathtt{refl}_{a}\),
then both sides are \(d\).\end{proof}

\section{\texorpdfstring{Homotopy
\(n\)-types}{Homotopy n-types}}\label{sec-n-type}

\begin{definition} We recursively define the predicate
\[\mathtt{is\text{-}}n\mathtt{\text{-}Type}(A): \equiv \begin{cases}
\mathtt{is\text{-}contr}(A)\quad & \text{if  }n = - 2, \\
(x:A) \rightarrow (y:A) \rightarrow \mathtt{is\text{-}}(n - 1)\mathtt{\text{-}Type}(x =_{A}^{}y)\quad & \text{if  }n \geq - 1,
\end{cases}\] for any \(n \geq - 2\), which reads `\(A\) is an
\textbf{\(n\)-type}'. We also define
\[n\mathtt{\text{-}Type}: \equiv \sum_{X:\Type}\mathtt{is\text{-}}n\mathtt{\text{-}Type}(X)\]
and in particular, we write
\(\mathtt{Prop}: \equiv ( - 1)\mathtt{\text{-}Type}\). \end{definition}

The type \(\mathtt{is\text{-}}n\mathtt{\text{-}Type}(A)\) is a
\(( - 1)\)-type \protect{\cite[Theorem 7.1.10]{hott}}, and the type
\(n\mathtt{\text{-}Type}\) is an \((n + 1)\)-type \protect{\cite[Theorem 7.1.11]{hott}}.

A \(( - 2)\)-type is by definition a contractible type; we also define a
\textbf{mere proposition} as a \(( - 1)\)-type and a \textbf{set} as a
\(0\)-type. They are characterised as follows:

\begin{itemize}
\item
  \(A:\Type\) is contractible, if and only if \(A\) is a pointed
  mere proposition, if and only if \(A \simeq \mathbb{1}\) \protect{\cite[Lemma 3.11.3]{hott}}.
\item
  \(A:\Type\) is a mere proposition if and only if for any
  \(x,y:A\), we have \(x =_{A}^{}y\) \protect{\cite[Definition 3.3.1 and Lemma 3.11.10]{hott}}. This aligns with the `proof-irrelevant' propositions in
  classical logic. For example, \(\mathbb{0}\) is a mere proposition
  \protect{\cite[Proposition 1.10.5]{brunerie}}.
\item
  \(A:\Type\) is a set, if and only if for any \(x,y:A\) and
  \(p,q:x =_{A}^{}y\) we have \(p =_{x =_{A}^{}y}^{}q\), if and only if
  \(A\) satisfies Axiom K, i.e., \(a =_{A}^{}a\) is contractible for any
  \(a:A\) \protect{\cite[Theorem 7.2.1]{hott}}. This aligns with the
  `equality-irrelevant' sets in classical mathematics and allows us to talk about set-theoretic notions such as subsets. The types
  \(\mathbb{2}\), \(\mathbb{N}\) and \(\mathbb{Z}\) are sets
  \protect{\cite[Proposition 1.10.7]{brunerie}}.
\end{itemize}

As a generalisation of Axiom K, we can characterise \(n\)-types in terms
of loop spaces: 
\begin{theorem}[\protect{\cite[Theorem 7.2.9]{hott}}]
  For any \(n \geq - 1\), a type \(A\) is an \(n\)-type if and only if
  \(\Omega^{n + 1}(A,a)\) is contractible for all \(a:A\).
\label{omega-contractible} \end{theorem}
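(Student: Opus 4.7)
The plan is to proceed by induction on $n \geq -1$, using two observations: by definition $A$ is an $n$-type iff its identity types $(x =_A y)$ are $(n-1)$-types, and by unfolding the recursive definition of iterated loop spaces, $\Omega^{n+2}(A, a) \equiv \Omega^{n+1}\bigl((a =_A a), \mathtt{refl}_a\bigr)$ holds judgementally.

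For the base case $n = -1$, I would show that $A$ is a mere proposition iff $\Omega^{0}(A, a) \equiv A$ is contractible for every $a : A$. The forward direction is exactly the characterization already recorded in the excerpt: a pointed mere proposition is contractible. For the reverse direction, given $x, y : A$, applying the hypothesis at $a := x$ yields that $A$ is contractible, whence $x =_A y$ is itself contractible, with center $\gamma(x)^{-1} \ct \gamma(y)$ coming from the contraction $\gamma$, and uniqueness verified by path induction on $p : x =_A y$ reducing to the inverse law.

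For the inductive step, suppose the statement holds for $n$. The forward direction is immediate: if $A$ is an $(n+1)$-type and $a : A$, then $a =_A a$ is an $n$-type, so the inductive hypothesis applied to the pointed type $(a =_A a, \mathtt{refl}_a)$ gives that $\Omega^{n+1}\bigl((a =_A a), \mathtt{refl}_a\bigr) \equiv \Omega^{n+2}(A, a)$ is contractible. For the reverse direction, assume $\Omega^{n+2}(A, a)$ is contractible for every $a : A$ and fix $x, y : A$. By the inductive hypothesis, showing $x =_A y$ is an $n$-type reduces to showing $\Omega^{n+1}\bigl((x =_A y), p\bigr)$ is contractible for every $p : x =_A y$. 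Here I would invoke path induction on $p$, reducing to the case $y \equiv x$ and $p \equiv \mathtt{refl}_x$, at which point the goal becomes contractibility of $\Omega^{n+1}\bigl((x =_A x), \mathtt{refl}_x\bigr) \equiv \Omega^{n+2}(A, x)$, which is supplied directly by the hypothesis.

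The main obstacle is the reverse direction of the inductive step: the inductive hypothesis only delivers contractibility of loop spaces based at canonical $\mathtt{refl}$-basepoints, while the obligation produced by unfolding the $n$-type condition is parameterised by an arbitrary $p : x =_A y$. Path induction is precisely the tool that moves the basepoint back to $\mathtt{refl}_x$, after which the identification $\Omega^{n+1}\bigl((x =_A x), \mathtt{refl}_x\bigr) \equiv \Omega^{n+2}(A, x)$ is judgemental and no further coherence work is required.
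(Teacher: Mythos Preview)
The paper does not supply its own proof of this theorem; it simply cites \cite[Theorem 7.2.9]{hott} and moves on. Your argument is correct and is essentially the standard proof from the HoTT book, with Lemma~7.2.8 there (``if $\Omega(A,a)$ is an $n$-type for every $a:A$ then so is $A$'') inlined into the inductive step rather than stated separately.

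Two small remarks. First, with the paper's convention $\Omega^{k+1}A :\equiv \Omega(\Omega^k A)$, the identification $\Omega^{n+2}(A,a) \equiv \Omega^{n+1}\bigl((a=_A a),\mathtt{refl}_a\bigr)$ is not literally judgemental for a \emph{variable} $n$; it requires a one-line induction on $n$ (both sides unfold to $\Omega$ iterated $n+2$ times). This does not affect the argument. Second, in the reverse direction of the inductive step you write ``fix $x,y:A$'' and then invoke path induction on $p:x=_A y$; strictly, path induction needs one endpoint free. Since the ambient goal is universally quantified over $x$ and $y$, you should reorganise to fix $x$ and prove $(y:A)\to(p:x=_A y)\to \mathtt{is\text{-}contr}\bigl(\Omega^{n+1}((x=_A y),p)\bigr)$, at which point path induction applies as you intend. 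With that adjustment the proof is complete.
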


Noticeably, \(n\)-types have the cumulativity property:

\begin{proposition}[\protect{\cite[Theorem 7.1.7]{hott}}]
 Let
\(m \geq n \geq - 2\). An \(n\)-type is also an \(m\)-type. 
\end{proposition}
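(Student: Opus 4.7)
The plan is to reduce the statement to the incremental claim that every $n$-type is an $(n+1)$-type, and then iterate $m-n$ times. So it suffices to prove: for every $n \geq -2$, if $A$ is an $n$-type then $A$ is an $(n+1)$-type. I will argue by induction on $n$.

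For the base case $n = -2$, suppose $A$ is contractible with centre $a$ and contraction $c:(x:A)\to a =_{A}^{} x$. I need to show $A$ is a mere proposition, i.e.\ that for every $x,y:A$ the identity type $x =_{A}^{} y$ is contractible. I would propose $c(x)^{-1} \ct c(y)$ as the centre of contraction. Given any $p:x =_{A}^{} y$, I need a proof that $c(x)^{-1} \ct c(y) = p$. By path induction on $p$, it suffices to assume $y$ is $x$ and $p$ is $\mathtt{refl}_{x}$, in which case the required equality reduces to $c(x)^{-1} \ct c(x) = \mathtt{refl}_{x}$, which holds by the inverse laws for path concatenation (these are standard coherence operations mentioned after \Cref{path-concat}).

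For the inductive step, suppose the claim holds at level $n$, and assume $A$ is an $(n+1)$-type. By definition this means that for every $x,y:A$ the type $x =_{A}^{} y$ is an $n$-type. Applying the induction hypothesis pointwise, each such $x =_{A}^{} y$ is an $(n+1)$-type. But this is exactly the condition saying $A$ is an $(n+2)$-type. This closes the induction.

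Finally, to conclude the proposition, I would induct on $k := m - n \geq 0$: the case $k=0$ is trivial, and given the claim for $k$, the above shows that an $n$-type is an $(n+k)$-type, hence also an $(n+k+1)$-type by one more application of the incremental statement. I expect the only mildly subtle part to be the base case, since it is the single place where one actually manipulates the contraction data rather than just unfolding the recursive definition of $\mathtt{is\text{-}}n\mathtt{\text{-}Type}$; everything after that is bookkeeping.
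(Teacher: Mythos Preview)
Your argument is correct and is the standard one: reduce to the single-step increment, handle the base case $n=-2$ by building a contraction of each identity type from the given contraction of $A$, and then peel off the recursive definition for the inductive step. The paper itself does not supply a proof of this proposition at all; it merely cites \cite[Theorem 7.1.7]{hott} and moves on, so there is nothing further to compare against.
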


\section{\texorpdfstring{\(n\)-truncations}{n-truncations}}\label{sec-trun}

Any type \(A\) can be turned into an \(n\)-type
\(\left\| A \right\|_{n}\), called the \textbf{\(n\)-truncation} of
\(A\), by a map \(| - |_{n}:A \rightarrow \left\| A \right\|_{n}\). The construction of  \(\left\| A \right\|_{n}\) as an HIT is elaborated in \Cref{app:hub}.

The induction principle (called the \textbf{truncation induction}) for
\(\left\| A \right\|_{n}\) states that given

\begin{itemize}
\item
  a dependent type
  \(P:\left\| A \right\|_{n} \rightarrow n\mathtt{\text{-}Type}\) and
\item
  a dependent function \(g:(a:A) \rightarrow P\left( |a|_{n} \right)\),
\end{itemize}
there is a dependent function
\(f:\left( x:\left\| A \right\|_{n} \right) \rightarrow P(x)\), such
that \(f\left( |a|_{n} \right): \equiv g(a)\) for all \(a:A\)
\protect{\cite[Theorem 7.3.2]{hott}}. The recursion principle states that given

\begin{itemize}
\item
  an \(n\)-type \(E\) and
\item
  a function \(g:A \rightarrow E\),
\end{itemize}
there is a function \(f:\left\| A \right\|_{n} \rightarrow E\) such that
\(f\left( |a|_{n} \right): \equiv g(a)\).

Particularly, the \(( - 1)\)-truncation turns any type into a mere
proposition. This allows us to get rid of the `superfluous' information
carried by the \(\Sigma\)-type and disjoint union, as footnoted in
\Cref{interpretation-table}, and perform logic in the classical sense. For
\(A:\Type\) and \(P:A \rightarrow \Type\), we shall use
the expression `there \textbf{merely} exists \(a:A\) such that
\(P(a)\)' to represent the type
\(\left\| {\sum_{a:A}P(a)} \right\|_{- 1}\) (see \protect{\cite[Sections 3.6, 3.7, 3.10]{hott}}).

The next result shows that truncation operation is a functor:

\begin{proposition} Given \(n \geq - 2\) and \(f:A \rightarrow B\), there
is a map
\(\left\| f \right\|_{n}:\left\| A \right\|_{n} \rightarrow \left\| B \right\|_{n}\)
such that
\(\left\| f \right\|_{n}\left( |a|_{n} \right): \equiv |f(a)|_{n}\). \label{truncation-functor} \end{proposition}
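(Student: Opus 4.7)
The plan is to apply the recursion principle of the $n$-truncation $\|A\|_n$ directly. Recall that this principle requires two pieces of data: an $n$-type $E$ and a function $A \to E$, and it produces a function $\|A\|_n \to E$ that commutes (judgementally) with $|-|_n$ on point constructors.

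First I would take $E := \|B\|_n$, which is an $n$-type by construction of the truncation. Next I would compose the given $f : A \to B$ with the unit $|-|_n : B \to \|B\|_n$ to obtain a function $g := |-|_n \circ f : A \to \|B\|_n$ sending $a$ to $|f(a)|_n$. Applying the recursion principle of $\|A\|_n$ to this data yields the desired map $\|f\|_n : \|A\|_n \to \|B\|_n$, and the computation rule immediately gives $\|f\|_n(|a|_n) \equiv g(a) \equiv |f(a)|_n$ for every $a : A$.

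There is no genuine obstacle here: the statement is essentially the functoriality of truncation on the \emph{object-to-object} level, and the key point is just recognising that $\|B\|_n$ is itself an $n$-type so that the (non-dependent) recursor of $\|A\|_n$ applies. Had the target not been an $n$-type, we would instead need to use the full truncation induction with a dependent $n$-type valued family, but here the situation collapses to a one-line application of the recursion principle.
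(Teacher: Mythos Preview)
Your proposal is correct and essentially identical to the paper's proof: both apply the recursion principle of \(\|A\|_n\) with target the \(n\)-type \(\|B\|_n\) and the map \(a \mapsto |f(a)|_n\), obtaining the required computation rule directly.
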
 \begin{proof} Since
\(\left\| B \right\|_{n}\) is an \(n\)-type, define
\(\left\| f \right\|_{n}:\left\| A \right\|_{n} \rightarrow \left\| B \right\|_{n}\)
by \(\left\| f \right\|_{n}\left( |a|_{n} \right): \equiv |f(a)|_{n}\)
by the recursion principle. \end{proof}

Unsurprisingly, \(n\)-truncating an \(n\)-type gives an equivalence:
\begin{proposition}[\protect{\cite[Corollary 7.3.7]{hott}}]
 Let
\(A:\Type\). \(A\) is an \(n\)-type if and only if the map
\(| - |_{n}:A \rightarrow \left\| A \right\|_{n}\) is an equivalence.

\label{trun-n-type} \end{proposition}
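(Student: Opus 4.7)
The plan is to prove the two directions separately, and the backward direction is the more substantial one, where we will construct an explicit quasi-inverse to $|-|_n$.

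For the forward direction, assume $|-|_n : A \to \|A\|_n$ is an equivalence. Since $\|A\|_n$ is an $n$-type by construction (this is the defining property guaranteed by the induction principle landing in $n\mathtt{\text{-}Type}$), it suffices to observe that being an $n$-type is preserved under equivalence. This preservation follows by induction on $n$: the base case $n = -2$ amounts to transferring a centre of contraction across an equivalence, and the inductive step uses that an equivalence $f : A \simeq B$ induces equivalences $(x =_A y) \simeq (f(x) =_B f(y))$ on identity types, so the inductive hypothesis propagates from $B$ to $A$.

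For the backward direction, assume $A$ is an $n$-type. The plan is to build a quasi-inverse $g : \|A\|_n \to A$ of $|-|_n$. Since $A$ is an $n$-type, the recursion principle of $\|A\|_n$ applied with $E := A$ and the function $\mathrm{id}_A : A \to A$ produces a function $g : \|A\|_n \to A$ satisfying the computation rule $g(|a|_n) \equiv a$. The homotopy $g \circ |-|_n \sim \mathrm{id}_A$ then holds judgementally: at any $a : A$, $g(|a|_n) \equiv a$.

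The remaining and more delicate step is to show $|-|_n \circ g \sim \mathrm{id}_{\|A\|_n}$, i.e.\ for every $x : \|A\|_n$, $|g(x)|_n = x$. I would establish this by truncation induction on $x$, taking the dependent type
\[ P(x) :\equiv \bigl(|g(x)|_n =_{\|A\|_n} x\bigr). \]
The key check is that $P(x)$ lands in $n\mathtt{\text{-}Type}$: since $\|A\|_n$ is an $n$-type, each identity type $u =_{\|A\|_n} v$ is an $(n-1)$-type, hence an $n$-type by cumulativity. Once this is in place, truncation induction reduces the goal to the case $x \equiv |a|_n$, where $|g(|a|_n)|_n \equiv |a|_n$ by the computation rule, and the equality holds by $\mathrm{refl}_{|a|_n}$.

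The main obstacle is the bookkeeping in this last step, namely verifying that the family $P$ genuinely takes values in $n$-types so that truncation induction is applicable; this in turn rests on the cumulativity of $n$-types and on the fact that the identity types of an $n$-type are $(n-1)$-types, both of which are available from \Cref{sec-n-type}. The case $n = -1$ deserves a brief mention: here $(n-1) = -2$ means identity types of a mere proposition are contractible, which is immediate from the characterisation of mere propositions, so the argument goes through uniformly for all $n \geq -2$.
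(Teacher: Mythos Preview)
Your proposal is correct and follows essentially the same approach as the paper: construct the quasi-inverse via the recursion principle (using that $A$ is an $n$-type), verify one round-trip judgementally and the other by truncation induction on the family $x \mapsto (|g(x)|_n = x)$, which lands in $n$-types by cumulativity; for the other implication, transfer the $n$-type property across the equivalence from $\|A\|_n$. The only cosmetic difference is that you have the labels ``forward'' and ``backward'' swapped relative to the order in the statement.
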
 \begin{proof} Suppose \(A\) is an
\(n\)-type. Then we are allowed to define
\(f:\left\| A \right\|_{n} \rightarrow A\) by
\(f\left( |a|_{n} \right): \equiv a\). By definition,
\(f\circ| - |_{n} \equiv \operatorname{id}\limits_{A}\). Then we would
like to define a dependent function
\(\left( x:\left\| A \right\|_{n} \right) \rightarrow \left( \left| {f(x)} \right|_{n} =_{\left\| A \right\|_{n}}^{}x \right)\).
Notice that
\(\left( \left| {f(x)} \right|_{n} =_{\left\| A \right\|_{n}}^{}x \right)\)
is an \(n\)-type (since it is by definition an \((n - 1)\)-type), so by
its induction principle it suffices to give a dependent function
\((a:A) \rightarrow \left( \left| {f\left( |a|_{n} \right)} \right|_{n} =_{\left\| A \right\|_{n}}^{}|a|_{n} \right)\),
which can be given by \(\lambda a.\mathtt{refl}_{|a|_{n}}\) since
\(f\left( |a|_{n} \right): \equiv a\). Therefore, \(f\) forms a
quasi-inverse for \(| - |_{n}\) and \(| - |_{n}\) is an equivalence.

Conversely, if \(| - |_{n}\) is an equivalence, then since
\(\left\| A \right\|_{n}\) is an \(n\)-type, so is \(A\). \end{proof}

The following two results deal with \(n\)-truncations of identity types
and loop spaces:

\begin{proposition}[\protect{\cite[Theorem 7.3.12]{hott}}]
 Let \(n \geq - 2\),
\(A:\Type\) and \(x,y:A\). Then
\[\left\| {x =_{A}^{}y} \right\|_{n} \simeq \left( |x|_{n + 1} =_{\left\| A \right\|_{n + 1}}^{}|y|_{n + 1} \right).\]

\label{truncation-identity} \end{proposition}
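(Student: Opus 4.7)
The plan is to use an \emph{encode--decode} argument. First, I would define a family
\[C : \| A \|_{n+1} \to \| A \|_{n+1} \to n\mathtt{\text{-}Type}\]
by double truncation recursion, setting $C(|x|_{n+1}, |y|_{n+1}) \equiv \| x =_A y \|_n$. This is well-typed because $n\mathtt{\text{-}Type}$ is itself an $(n+1)$-type, so recursion out of $\| A \|_{n+1}$ into it is admissible. It then suffices to prove the stronger statement that $C(u, v) \simeq (u =_{\| A \|_{n+1}} v)$ for all $u, v : \| A \|_{n+1}$, since the desired equivalence is the specialisation $u := |x|_{n+1}$, $v := |y|_{n+1}$.

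Next, I would construct a decode map $d_{u, v} : C(u, v) \to (u = v)$ by truncation induction on $u$ and $v$. The identity type $\bigl(|x|_{n+1} =_{\| A \|_{n+1}} |y|_{n+1}\bigr)$ is an $n$-type since $\| A \|_{n+1}$ is an $(n+1)$-type, and function types into $n$-types are $n$-types, so both inductions are legitimate. It then remains to define $d_{|x|_{n+1}, |y|_{n+1}} : \| x =_A y \|_n \to (|x|_{n+1} = |y|_{n+1})$, which by truncation recursion sends $|p|_n \mapsto \mathtt{ap}_{|-|_{n+1}}(p)$. For the encode direction, I would first build a basepoint $c_u : C(u, u)$ for each $u$ by truncation induction, taking $c_{|x|_{n+1}} := |\mathtt{refl}_x|_n$ (the family $u \mapsto C(u, u)$ is valued in $n\mathtt{\text{-}Type}$, hence the induction is permitted). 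Then by path induction on $p : u = v$ starting from $c_u$ at the reflexivity case, I obtain $e_{u, v}(p) : C(u, v)$.

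Finally, I would verify that $d$ and $e$ are mutually inverse. For $d \circ e \sim \mathrm{id}$, path induction on $p$ followed by truncation induction on $u$ reduces the claim to $\mathtt{ap}_{|-|_{n+1}}(\mathtt{refl}_x) = \mathtt{refl}_{|x|_{n+1}}$, which holds by \Cref{ap}. For $e \circ d \sim \mathrm{id}$, the target identity type lies inside the $n$-type $C(u, v)$ and is therefore itself an $n$-type, so truncation inductions on $u$, $v$, and the element of $C(u, v)$ reduce matters to checking the case $|p|_n$ for some $p : x =_A y$; a further path induction on $p$ then collapses everything to reflexivity. I expect the main obstacle to be the careful bookkeeping of truncation levels: each induction must be applied to a family whose codomain is truncated at the appropriate level, so at every step one has to verify that the relevant goal type is an $n$-type (or $(n+1)$-type) before invoking the corresponding induction principle. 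This is especially delicate when defining $C$ and the basepoint $c_u$, where the subtle distinction between $n\mathtt{\text{-}Type}$ and $(n+1)\mathtt{\text{-}Type}$ governs which recursion is available.
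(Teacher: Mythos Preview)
The paper does not supply its own proof of this proposition; it simply cites \cite[Theorem 7.3.12]{hott}. Your encode--decode argument is correct and is precisely the standard proof given there: define the code family $C$ by double truncation recursion into $n\mathtt{\text{-}Type}$ (using that $n\mathtt{\text{-}Type}$ is an $(n+1)$-type), build decode by truncation induction and $\mathtt{ap}_{|-|_{n+1}}$, build encode by transporting a reflexivity basepoint, and verify the round-trips by the chain of inductions you describe.
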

\begin{proposition}[\protect{\cite[Corollary 7.3.14]{hott}}]
 Let \(n \geq - 2\),
\(k \geq 0\), and \(A\) be a pointed type. Then
\(\left\| {\Omega^{k}(A)} \right\|_{n} \simeq \Omega^{k}\left( \left\| A \right\|_{n + k} \right).\)

\label{truncation-omega-commute} \end{proposition}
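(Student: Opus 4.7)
The plan is to proceed by induction on $k$, with the statement being universal in $n$ (that is, the induction hypothesis is that for the fixed $k$, the equivalence holds for every $n \geq -2$).

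For the base case $k = 0$, we have $\Omega^{0}(A) \equiv A$ and $\left\| A \right\|_{n+0} \equiv \left\| A \right\|_{n}$ by the definitions of $\Omega^{0}$ and of addition with $0$, so the two sides are judgementally equal and the equivalence is witnessed by the identity function (which is trivially an equivalence via $\lambda x.x$ as its own quasi-inverse).

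For the inductive step, suppose the equivalence $\left\| \Omega^{k}(B) \right\|_{m} \simeq \Omega^{k}(\left\| B \right\|_{m+k})$ holds for every pointed $B$ and every $m \geq -2$. I would compute, for a given pointed type $A$ and given $n \geq -2$:
\begin{align*}
\left\| \Omega^{k+1}(A) \right\|_{n}
&\equiv \left\| \Omega(\Omega^{k}(A)) \right\|_{n} \\
&\equiv \left\| \star_{\Omega^{k}(A)} =_{\Omega^{k}(A)} \star_{\Omega^{k}(A)} \right\|_{n} \\
&\simeq \left( |\star|_{n+1} =_{\left\| \Omega^{k}(A) \right\|_{n+1}} |\star|_{n+1} \right) \\
&\equiv \Omega\!\left( \left\| \Omega^{k}(A) \right\|_{n+1} \right),
\end{align*}
where the key step is the third line, which is an instance of \Cref{truncation-identity} at truncation level $n$ applied to the ambient type $\Omega^{k}(A)$ with $x = y = \star_{\Omega^{k}(A)}$. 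Now I would invoke the inductive hypothesis with $m := n+1$ to get the equivalence $\left\| \Omega^{k}(A) \right\|_{n+1} \simeq \Omega^{k}(\left\| A \right\|_{n+k+1})$, and then apply $\Omega$ to both sides, yielding $\Omega(\left\| \Omega^{k}(A) \right\|_{n+1}) \simeq \Omega^{k+1}(\left\| A \right\|_{n+k+1})$. Composing the two equivalences gives the desired result.

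There are two small points to justify along the way, neither of which I expect to be a serious obstacle. First, I need that $\Omega$ takes pointed equivalences to equivalences; this follows because an equivalence $f \colon X \simeq Y$ between pointed types (respecting basepoints) induces a map on loop spaces via $\mathtt{ap}_{f}$, whose quasi-inverse comes from $\mathtt{ap}_{f^{-1}}$ together with the coherences from the homotopies $f \circ f^{-1} \sim \mathrm{id}$ and $f^{-1} \circ f \sim \mathrm{id}$. Second, the basepoint bookkeeping: the basepoint of $\Omega^{k}(A)$ is (iteratively) $\mathtt{refl}$, and under the equivalence of \Cref{truncation-identity}, $|\mathtt{refl}|$ corresponds to $\mathtt{refl}$ on the truncated side, so the pointed structure is preserved and the application of $\Omega$ at the end is well defined. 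The main conceptual point — and the potentially confusing one — is remembering that the induction must be universal in $n$, because the step for $k+1$ uses the hypothesis at level $n+1$ rather than at level $n$, which is exactly why the truncation index on the right-hand side shifts as $k$ grows.
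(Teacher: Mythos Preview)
Your proof is correct and is precisely the standard argument: induction on $k$ (universal in $n$), using \Cref{truncation-identity} for the inductive step and the functoriality of $\Omega$ on pointed equivalences. The paper itself does not give a proof of this proposition but simply cites \cite[Corollary 7.3.14]{hott}, whose proof is exactly the one you wrote; so there is nothing to compare beyond noting that you have reproduced the intended argument.
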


Finally, \(n\)-truncations have the cumulativity property:

\begin{lemma}[\protect{\cite[Lemma 7.3.15]{hott}}]
 For \(A:\Type\) and
\(- 2 \leq m \leq n\),
\(\left\| \left\| A \right\|_{n} \right\|_{m} \simeq \left\| A \right\|_{m}.\)

\label{trun-trun} \end{lemma}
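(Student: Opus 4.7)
The plan is to construct maps in both directions using the recursion and induction principles of truncations, and then verify that they form a quasi-inverse pair. The essential ingredient throughout will be the cumulativity of $n$-types (the proposition just before \Cref{sec-trun}): since $m \leq n$, every $m$-type is an $n$-type.

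First, I would build the forward map $f:\|\|A\|_n\|_m \to \|A\|_m$. Because $\|A\|_m$ is an $m$-type and hence, by cumulativity, an $n$-type, applying the recursion principle of $\|\cdot\|_n$ to the function $|\cdot|_m : A \to \|A\|_m$ yields a map $g:\|A\|_n \to \|A\|_m$ with $g(|a|_n) \equiv |a|_m$. Then, using the recursion principle of $\|\cdot\|_m$ on $\|A\|_n$ with the $m$-type $\|A\|_m$ and the map $g$, I obtain $f:\|\|A\|_n\|_m \to \|A\|_m$ satisfying $f\bigl(||a|_n|_m\bigr) \equiv |a|_m$. The inverse candidate $h:\|A\|_m \to \|\|A\|_n\|_m$ is obtained directly from the recursion principle of $\|\cdot\|_m$ applied to the map $|\cdot|_m \circ |\cdot|_n : A \to \|\|A\|_n\|_m$, noting that the target is already an $m$-type; this gives $h(|a|_m) \equiv ||a|_n|_m$.

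Next, I would verify the two homotopies by nested truncation inductions. For $f \circ h \sim \operatorname{id}_{\|A\|_m}$, I note that for each $x:\|A\|_m$ the identity type $f(h(x)) =_{\|A\|_m} x$ is an $(m-1)$-type, hence an $m$-type, so truncation induction reduces the task to the case $x \equiv |a|_m$, where both sides compute to $|a|_m$. For $h \circ f \sim \operatorname{id}_{\|\|A\|_n\|_m}$, truncation induction on $y:\|\|A\|_n\|_m$ (whose identity types are $m$-types) reduces to checking $h(f(|z|_m)) = |z|_m$ for $z:\|A\|_n$; this identity lives in an $m$-type and thus, by cumulativity, is an $n$-type, so a second truncation induction reduces to $z \equiv |a|_n$, where the two sides again compute to $||a|_n|_m$.

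The main obstacle is the bookkeeping in the second homotopy: one must apply truncation induction twice, at levels $m$ and $n$ respectively, and each application requires first establishing that the relevant identity type lies in the appropriate truncation level. This is the only place where both $m$ and $n$ genuinely interact, and it is where the hypothesis $m \leq n$ (via cumulativity) is indispensable. The rest of the argument is the usual pattern of constructing maps from truncations by their universal properties and checking computation rules on the constructors $|\cdot|_n$ and $|\cdot|_m$.
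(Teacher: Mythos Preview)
Your proposal is correct, but note that the paper does not actually supply a proof of this lemma: it is stated with a bare citation to \cite[Lemma 7.3.15]{hott} and no \texttt{proof} environment follows. Your argument---constructing both maps via the recursion principles and verifying the two round-trip homotopies by nested truncation induction, using cumulativity of $n$-types at each step to justify that the goal families land in the required truncation level---is the standard one and is essentially what appears in the cited reference.
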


\section{Interlude: homotopy groups of the circle}\label{sec-algebra}

In this interlude, we first take groups as an example to demonstrate how
to construct algebraic structures. Then we define \emph{homotopy groups}
and calculate them for \({\mathbb{S}}^{1}\).

\begin{definition} A \textbf{group} is a \emph{set} \(G\) with functions
\(m:G \rightarrow G \rightarrow G\) (called the \emph{multiplication}
function), \(i:G \rightarrow G\) (called the \emph{inversion} function)
and a term \(e:G\) (called the \emph{neutral element}), such that the
equalities hold for all \(x,y,z:G\):

\begin{itemize}
\item
  \(m\left( m(x)(y) \right)(z) =_{G}^{}m(x)\left( m(y)(z) \right)\);
\item
  \(m(x)\left( i(x) \right) =_{G}^{}e\);
\item
  \(m\left( i(x) \right)(x) =_{G}^{}e\);
\item
  \(m(x)(e) =_{G}^{}x\);
\item
  \(m(e)(x) =_{G}^{}x\).
\end{itemize}

The group is \textbf{abelian} if further \(m(x)(y) =_{G}^{}m(y)(x)\)
hold for all \(x,y:G\). \end{definition}

\begin{remark} Since a group is by definition a set, any of these
equalities has at most one inhabitant. Hence, the predicate
`\((G,m,i,e)\) is a group' is a mere proposition. This allows the
groups we define to behave identically as in classical mathematics. \end{remark}

\begin{definition} Let \((G,m,i,e)\) and
\((G^{\prime},m^{\prime},i^{\prime},e^{\prime})\) be two groups. Then a function
\(f:G \rightarrow G^{\prime}\) is called a \textbf{group homomorphism} if
\(m^{\prime}\left( f(x) \right)\left( f(y) \right) =_{G^{\prime}}^{}f\left( m(x) \right)\left( m(y) \right)\)
for all \(x,y:G\). A group homomorphism \(f\) is a \textbf{group
isomorphism} if it is an equivalence. \end{definition}

\begin{example} \(\mathbb{1}\) is trivially a group, written as \(0\). \end{example}

Importantly, set-truncation enables us to turn a loop space into a
group: \begin{definition} Let \(A\) be a pointed type and \(n \geq 1\).
Define the \textbf{\(n\)-th homotopy group} of \(A\) as
\(\pi_{n}(A): \equiv \left\| {\Omega^{n}(A)} \right\|_{0}\). \end{definition}

When \(n \geq 1\), the set \(\pi_{n}(A)\) is a group with path
concatenation, path inversion and reflexivity. When \(n \geq 2\), by the
Eckmann-Hilton argument \protect{\cite[Theorem 2.1.6]{hott}}, \(\pi_{n}(A)\) is an
abelian group. We may also write
\(\pi_{0}(A) \equiv \left\| A \right\|_{0}\), although it is not a group
in general. Finally, \(\pi_{n}\) is a functor, as truncation and loop
space are both functors by \Cref{loop-functor} and \Cref{truncation-functor}.

We know that \({\mathbb{S}}^{1}\) is not contractible by \Cref{s1-not-1}. For
the rest of this section, we briefly sketch the calculation of
\(\pi_{k}\left( {\mathbb{S}}^{1} \right)\) for all \(k \geq 1\). We
proceed by first showing
\(\Omega^{1}\left( {\mathbb{S}}^{1} \right) \simeq {\mathbb{Z}}\) with
the \emph{encode-decode method} based on \protect{\cite[Section 8.1.4]{hott}}. This
technique will be used again in \Cref{sec-blakers}.

The initial idea would be to give two functions
\(\Omega^{1}\left( {\mathbb{S}}^{1} \right) \leftrightarrows {\mathbb{Z}}\)
and show that they are quasi-inverses. The function
\({\mathbb{Z}} \rightarrow \Omega^{1}\left( {\mathbb{S}}^{1} \right)\)
is given by \(\varphi: \equiv \lambda n.\mathtt{loop}^{n}\), where
\(\mathtt{loop}^{n}\) denotes the \(|n|\)-time concatenation of
\(\mathtt{loop}\) if \(n > 0\) or \(\mathtt{loop}^{- 1}\) if
\(n < 0\) and \(\mathtt{refl}_{\mathtt{base}}\) if \(n = 0\).
However, we cannot easily define a function
\(\left( \mathtt{base} =_{{\mathbb{S}}^{1}}^{}\mathtt{base} \right) \rightarrow {\mathbb{Z}}\),
as path induction is unavailable when the two endpoints are both fixed.
We thus generalise the claim
\(\Omega^{1}\left( {\mathbb{S}}^{1} \right) \simeq {\mathbb{Z}}\) to the
following.

\begin{lemma}  There exists a dependent type
\(\mathtt{code}:{\mathbb{S}}^{1} \rightarrow \Type\), such
that
\[\left( x:{\mathbb{S}}^{1} \right) \rightarrow \left( \left( \mathtt{base} =_{{\mathbb{S}}^{1}}^{}x \right) \simeq \mathtt{code}(x) \right)\]
and \(\mathtt{code}(\mathtt{base}): \equiv {\mathbb{Z}}\). \label{s1-lemma} \end{lemma}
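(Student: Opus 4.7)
The plan is to use the standard encode-decode method. First, I would define $\mathtt{code}:{\mathbb{S}}^{1}\to\mathtt{Type}$ by the circle recursion principle, sending $\mathtt{base}$ to ${\mathbb{Z}}$ and $\mathtt{loop}$ to the path $\mathtt{ua}(\mathtt{succ})$, where $\mathtt{succ}:{\mathbb{Z}}\simeq{\mathbb{Z}}$ is the successor equivalence (and so $\mathtt{succ}^{-1}$ is predecessor). By the computation rule together with the definition of $\mathtt{idtoeqv}$, transport along $\mathtt{loop}$ in $\mathtt{code}$ is the successor function, and transport along $\mathtt{loop}^{-1}$ is the predecessor; by \Cref{transport-functorality} iterated transport along $\mathtt{loop}^n$ computes to $n$-fold application of $\mathtt{succ}$.

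Next, for each $x:{\mathbb{S}}^{1}$, I would introduce two maps in opposite directions. Define
\[\mathtt{encode}_x(p)\,:\equiv\,\mathtt{transport}^{\mathtt{code}}(p)(0):\mathtt{code}(x)\]
for $p:(\mathtt{base}=x)$, and define $\mathtt{decode}_x:\mathtt{code}(x)\to(\mathtt{base}=x)$ by circle induction on $x$. When $x$ is $\mathtt{base}$, set $\mathtt{decode}_{\mathtt{base}}(n):\equiv\mathtt{loop}^n$, in the sense of $\varphi$ defined in the preceding text. When $x$ varies along $\mathtt{loop}$, the required dependent path lives in the function type $\mathtt{code}(x)\to(\mathtt{base}=x)$, so I would apply \Cref{function-identity} and \Cref{transport-path} to reduce the coherence to showing, for each $n:{\mathbb{Z}}$, the equality $\mathtt{loop}^{n-1}\ct\mathtt{loop}=\mathtt{loop}^n$. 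This last identity follows by a straightforward case split on the sign of $n$ and coherence operations on paths.

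Finally, I would verify that $\mathtt{encode}_x$ and $\mathtt{decode}_x$ are quasi-inverses for every $x$. The direction $\mathtt{decode}_x(\mathtt{encode}_x(p))=p$ goes by path induction on $p:(\mathtt{base}=x)$: taking $x$ to be $\mathtt{base}$ and $p$ to be $\mathtt{refl}_{\mathtt{base}}$, both sides compute judgementally to $\mathtt{loop}^0\equiv\mathtt{refl}_{\mathtt{base}}$. The other direction $\mathtt{encode}_x(\mathtt{decode}_x(c))=c$ proceeds by circle induction on $x$; because the codomain of the equality is a set (${\mathbb{Z}}$ after transport), the path-constructor case is automatic, and the point case reduces to showing $\mathtt{encode}_{\mathtt{base}}(\mathtt{loop}^n)=n$ for all $n:{\mathbb{Z}}$, which follows by induction on $n$ using \Cref{transport-functorality} together with the fact that transport along $\mathtt{loop}$ is $\mathtt{succ}$.

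The main obstacle is the circle-induction step constructing $\mathtt{decode}_x$: correctly unfolding the transport of $\mathtt{decode}_{\mathtt{base}}$ across $\mathtt{loop}$ requires combining the two separate transport lemmas for function types and for identity types, and the bookkeeping must match the sign conventions so that the integer identity $\mathtt{loop}^{n-1}\ct\mathtt{loop}=\mathtt{loop}^n$ drops out cleanly; everything else is then routine.
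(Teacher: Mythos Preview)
Your proposal is correct and follows essentially the same encode--decode argument as the paper: define \(\mathtt{code}\) by circle recursion via \(\mathtt{ua}(\mathtt{succ})\), set \(\mathtt{encode}_x(p):\equiv p_*(0)\) and \(\mathtt{decode}_{\mathtt{base}}(n):\equiv\mathtt{loop}^n\), then verify the two round-trips by path induction and circle induction respectively. You in fact supply more detail than the paper's sketch (notably the observation that the \(\mathtt{loop}\)-case of the second round-trip is automatic because the goal lives in a mere proposition over the set \(\mathbb{Z}\)), but the strategy is identical.
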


\begin{proof}[Proof sketch]
To finish the definition of
\(\mathtt{code}\), as \(\mathtt{succ} :{\mathbb{Z}} \rightarrow {\mathbb{Z}}\)
is an equivalence, \(\mathtt{ua}( \mathtt{succ} )\) is a path
\({\mathbb{Z}} = {\mathbb{Z}}\). We then set
\(\mathtt{ap}_{\mathtt{code}}\left( \mathtt{loop} \right) := \mathtt{ua}( \mathtt{succ} )\).
	\texttt{code} represents the universal cover of the
circle: the fibre over \(\mathtt{base}\) is \(\mathbb{Z}\) and going
around 	\texttt{loop} once corresponds to going up by
one in \(\mathbb{Z}\).

From this definition, we can prove that
\begin{equation}
  \mathtt{transport}^{\mathtt{code}}\left( \mathtt{loop}^{c} \right)(x) = (x + c)
\label{tr-code-loop-3} \end{equation} for any \(c:{\mathbb{Z}}\) (by
\protect{\cite[Lemma 8.1.2]{hott}} and \Cref{transport-functorality}).

It then remains to define four functions of the types:

\begin{enumerate}
\item
  \(\mathtt{encode}:\left( x:{\mathbb{S}}^{1} \right) \rightarrow \left( \mathtt{base} =_{{\mathbb{S}}^{1}}^{}x \right) \rightarrow \mathtt{code}(x)\),
\item
  \(\mathtt{decode}:\left( x:{\mathbb{S}}^{1} \right) \rightarrow \mathtt{code}(x) \rightarrow \left( \mathtt{base} =_{{\mathbb{S}}^{1}}^{}x \right)\),
\item
  \(\left( x:{\mathbb{S}}^{1} \right) \rightarrow \left( p:\mathtt{base} =_{{\mathbb{S}}^{1}}^{}x \right) \rightarrow \left( \mathtt{decode}(x)(\mathtt{encode}(x)(p)) = p \right)\),
  and
\item
  \(\left( x:{\mathbb{S}}^{1} \right) \rightarrow \left( c:\mathtt{code}(x) \right) \rightarrow \left( \mathtt{encode}(x)(\mathtt{decode}(x)(c)) = c \right)\),
\end{enumerate}
where (3) and (4) state that for each \(x:{\mathbb{S}}^{1}\),
\(\mathtt{encode}(x)\) and \(\mathtt{decode}(x)\) are
quasi-inverses.

To proceed, (1) is given by
\[\mathtt{encode}(x)(p): \equiv \mathtt{transport}^{\mathtt{code}}(p)(0);\]
(2) uses circle induction, where \(\mathtt{decode}(\mathtt{base})\)
is given by the function
\(\varphi:{\mathbb{Z}} \rightarrow \left( \mathtt{base} = \mathtt{base} \right)\)
defined earlier; (3) uses path induction and (4) circle induction, and
they both apply Equality~\ref{tr-code-loop-3}. We omit the details. 
\end{proof}

Now setting \(x\) to \(\mathtt{base}\) in \Cref{s1-lemma} yields
\(\Omega^{1}\left( {\mathbb{S}}^{1} \right) \simeq {\mathbb{Z}}\).
\begin{corollary}
\(\pi_{1}\left( {\mathbb{S}}^{1} \right) \simeq {\mathbb{Z}}\) and
\(\pi_{k}\left( {\mathbb{S}}^{1} \right) \simeq 0\) for \(k \geq 2\).
\label{pk-s1} \end{corollary}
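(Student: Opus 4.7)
The plan is to deduce both statements quickly from \Cref{s1-lemma} together with the behaviour of $0$-truncation on sets and on contractible types.

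First I would specialise \Cref{s1-lemma} at $x := \mathtt{base}$ to obtain an equivalence $\Omega^1(\mathbb{S}^1) \simeq \mathtt{code}(\mathtt{base}) \equiv \mathbb{Z}$. Moreover, this is a \emph{pointed} equivalence: the map $\mathtt{encode}(\mathtt{base})$ sends $\mathtt{refl}_{\mathtt{base}}$ to $\mathtt{transport}^{\mathtt{code}}(\mathtt{refl}_{\mathtt{base}})(0) \equiv 0$ by \Cref{transport}, and $0$ is the natural basepoint of $\mathbb{Z}$. Since $\mathbb{Z}$ is a set, \Cref{trun-n-type} gives $\|\mathbb{Z}\|_0 \simeq \mathbb{Z}$, so
\[\pi_1(\mathbb{S}^1) \equiv \|\Omega^1(\mathbb{S}^1)\|_0 \simeq \|\mathbb{Z}\|_0 \simeq \mathbb{Z}.\]

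For $k \geq 2$, I would iterate the pointed equivalence above to obtain $\Omega^k(\mathbb{S}^1) \simeq \Omega^{k-1}(\mathbb{Z}, 0)$. Because $\mathbb{Z}$ is a $0$-type, \Cref{omega-contractible} (applied with $n = 0$) says $\Omega^1(\mathbb{Z}, 0)$ is contractible. A short induction on $k - 1 \geq 1$ then shows $\Omega^{k-1}(\mathbb{Z}, 0)$ is contractible for every $k \geq 2$, since the loop space of a contractible pointed type is contractible (it is a $0$-type, so \Cref{omega-contractible} applies again). A contractible type is a $0$-type by cumulativity, so \Cref{trun-n-type} gives $\pi_k(\mathbb{S}^1) \equiv \|\Omega^{k-1}(\mathbb{Z}, 0)\|_0 \simeq \Omega^{k-1}(\mathbb{Z}, 0)$, which is contractible and hence equivalent to $\mathbb{1} \equiv 0$.

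The only delicate point is verifying that the equivalence of \Cref{s1-lemma} respects basepoints, so that iterated loop spaces line up; this is immediate once one notes that $\mathtt{encode}(\mathtt{base})$ sends $\mathtt{refl}_{\mathtt{base}}$ to $0$. Beyond this, the whole argument reduces to routine bookkeeping with $0$-types, contractibility, and truncation, using \Cref{trun-n-type} and \Cref{omega-contractible}.
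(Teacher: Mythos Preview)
Your proposal is correct and follows essentially the same route as the paper: specialise \Cref{s1-lemma} at $\mathtt{base}$, use that $\mathbb{Z}$ is a set to handle $\pi_1$, and use that sets have trivial higher loop spaces to handle $\pi_k$ for $k\geq 2$.

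The one organisational difference is that the paper avoids your basepoint bookkeeping entirely. Rather than transferring along a pointed equivalence to $\Omega^{k-1}(\mathbb{Z},0)$, the paper simply observes that being a $0$-type is invariant under (unpointed) equivalence, so $\Omega^1(\mathbb{S}^1)$ is itself a set; then $\Omega^k(\mathbb{S}^1)\equiv\Omega^{k-1}(\Omega^1(\mathbb{S}^1))$ is contractible directly, with no need to check where $\mathtt{refl}_{\mathtt{base}}$ lands. Your version is not wrong, just slightly more laborious than necessary.
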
 \begin{proof} Since \(\mathbb{Z}\) is a
set, \(\left\| {\mathbb{Z}} \right\|_{0} = {\mathbb{Z}}\). Thus,
\(\pi_{1}\left( {\mathbb{S}}^{1} \right) \equiv \left\| {\Omega^{1}\left( {\mathbb{S}}^{1} \right)} \right\|_{0} \simeq \left\| {\mathbb{Z}} \right\|_{0} = {\mathbb{Z}}\).
Further, since \(\mathbb{Z}\) is a set,
\(\Omega^{1}\left( {\mathbb{S}}^{1} \right)\) is also a set, which
implies that \(\Omega^{k}\left( {\mathbb{S}}^{1} \right)\) is trivial
for any \(k \geq 2\), and therefore so is
\(\pi_{k}\left( {\mathbb{S}}^{1} \right)\). \end{proof}

\section{\texorpdfstring{\(n\)-connected
types}{n-connected types}}\label{sec-connect}

\begin{definition} Let \(A,B:\Type\) and \(f:A \rightarrow B\). The
\textbf{fibre} of \(f\) over \(y:B\) is defined as the type
\[\mathtt{fib}_{f}(y): \equiv \sum_{x:A}\left( f(x) =_{B}^{}y \right).\]
\label{fibre} \end{definition}

\begin{definition} A type \(A\) is \textbf{\(n\)-connected} if
\(\left\| A \right\|_{n}\) is contractible, and a function
\(f:A \rightarrow B\) is \textbf{\(n\)-connected} if for all \(y:B\),
\(\left\| {\mathtt{fib}_{f}(y)} \right\|_{n}\) is contractible. In
particular, a function \(f:A \rightarrow B\) is \textbf{surjective},
\textbf{connected}, or \textbf{simply-connected} if it is \(( - 1)\)-,
\(0\)-, or \(1\)-connected, respectively. \end{definition} \begin{remark} An
\(n\)-connected function \(f\) can be thought of as `being surjective up
to dimension \((n + 1)\)'. In particular, \(f\) preserves `homotopy
information' up to dimension \(n\). This is formalised in
\Cref{connected-induce-equivalence} and \Cref{connect-surjection}. We also remark
that different conventions in defining the \(n\)-connectedness of
functions exist. \end{remark}

Easily seen from the definitions are:

\begin{enumerate}
\item
  A type \(A\) is \(n\)-connected if and only if the unique function
  \(A \rightarrow \mathbb{1}\) is \(n\)-connected;
\item
  Every type is \(( - 2)\)-connected and so is every function;
\item
  Every pointed type is \(( - 1)\)-connected.
\end{enumerate}

\begin{example}[\redDiamond  Inspired by \protect{\cite[Figure 3.4]{kbh}}]
Consider an HIT \(G\) generated by a non-empty
graph \(\overline{G}\): each vertex \( \overline{v} \in \overline{G}\) gives
a term \(v:G\) and each edge \(\overline{e}\) connecting vertices
\(\overline{v_{1}}\) and \(\overline{v_{2}}\) gives a path
\(e:v_{1} =_{G}^{}v_{2}\), where the order of \(v_{1}\) and \(v_{2}\) is
arbitrary thanks to path inversion.

If \(\overline{G}\) has only one connected component, then \(G\) is
\(0\)-connected. Indeed, as \(\left\| G \right\|_{0}\) is pointed, it
suffices to show that \(x^{\prime} = y^{\prime}\) is contractible for any
\(x^{\prime},y^{\prime}:\left\| G \right\|_{0}\). The goal is a mere
proposition, so we assume \(x^{\prime}\) is \(|x|_{0}\) and \(y^{\prime}\) is
\(|y|_{0}\) for \(x,y:G\). By \Cref{truncation-identity},
\(\left( |x|_{0} = |y|_{0} \right) \simeq \left\| {x = y} \right\|_{- 1}\),
but \(\left\| {x = y} \right\|_{- 1}\) is pointed by assumption and is
thus contractible.

If further \(\overline{G}\) is a tree, then \(G\) is \(1\)-connected.
Indeed, using the same reasoning twice, it suffices to show that
\(\left\| {p = q} \right\|_{- 1}\) is contractible for any \(x,y:G\) and
\(p,q:x = y\), which holds since any two vertices on a tree has a unique
path between them up to homotopy. 
\label{graph-example} \end{example}

\begin{example}\({\mathbb{S}}^{0} \equiv \mathbb{2}\) is pointed and is
\(( - 1)\)-connected. \Cref{graph-example} indicates that \({\mathbb{S}}^{1}\)
is \(0\)-connected. In fact, for all \(n:{\mathbb{N}}\), the sphere
\({\mathbb{S}}^{n}\) is \((n - 1)\)-connected \protect{\cite[Corollary
8.2.2]{hott}}. \label{s-n-connected} \end{example}

We observe that \(n\)-connected types and functions have the (downward)
cumulativity property:

\begin{proposition}[\protect{\cite[Proposition 2.3.7]{brunerie}}]
 Let
\(- 2 \leq m \leq n\). Any \(n\)-connected type (resp. function) is also
an \(m\)-connected type (resp. function). 
\label{connected-below} \end{proposition}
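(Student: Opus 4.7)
The plan is to reduce the function case to the type case by applying the argument fibrewise, and for the type case to chase the equivalence $\bigl\|\|A\|_n\bigr\|_m \simeq \|A\|_m$ from \Cref{trun-trun} combined with the cumulativity of $n$-types.

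First I would handle the type case. Assume $A$ is $n$-connected, i.e., $\|A\|_n$ is contractible. Since every contractible type is a $(-2)$-type, and $(-2)$-types are $k$-types for every $k \geq -2$ by the cumulativity property of $n$-types, we conclude that $\|A\|_n$ is an $m$-type for our chosen $m$ with $-2 \leq m \leq n$. Then by \Cref{trun-n-type}, the truncation map $|-|_m : \|A\|_n \to \bigl\|\|A\|_n\bigr\|_m$ is an equivalence, giving $\bigl\|\|A\|_n\bigr\|_m \simeq \|A\|_n$. Composing with the equivalence $\bigl\|\|A\|_n\bigr\|_m \simeq \|A\|_m$ from \Cref{trun-trun} yields $\|A\|_m \simeq \|A\|_n$. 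Since contractibility transfers along equivalences and $\|A\|_n$ is contractible, $\|A\|_m$ is contractible, so $A$ is $m$-connected.

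For the function case, suppose $f : A \to B$ is $n$-connected, meaning $\|\mathtt{fib}_f(y)\|_n$ is contractible for every $y:B$. Fix an arbitrary $y:B$ and apply the type case to $\mathtt{fib}_f(y)$: since this fibre is $n$-connected, it is also $m$-connected, i.e., $\|\mathtt{fib}_f(y)\|_m$ is contractible. Since $y:B$ was arbitrary, we conclude that $f$ is $m$-connected.

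There isn't really a hard step here; the whole argument is just assembling \Cref{trun-trun}, the cumulativity of $n$-types, and \Cref{trun-n-type}. The only place one could slip is in remembering that the hypothesis $-2 \leq m \leq n$ is exactly what is needed so that (a) \Cref{trun-trun} applies, and (b) the $(-2)$-type $\|A\|_n$ is upgraded (not downgraded) to an $m$-type by cumulativity.
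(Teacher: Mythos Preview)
Your proof is correct and follows essentially the same route as the paper: use \Cref{trun-trun} to get $\|A\|_m \simeq \bigl\|\|A\|_n\bigr\|_m$, then \Cref{trun-n-type} (with cumulativity of $n$-types) to get $\bigl\|\|A\|_n\bigr\|_m \simeq \|A\|_n$, and conclude contractibility; the function case is handled fibrewise exactly as you do.
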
 \begin{proof} Let
\(A:\Type\) be \(n\)-connected. Then
\(\left\| A \right\|_{m} \simeq \left\| \left\| A \right\|_{n} \right\|_{m}\)
by \Cref{trun-trun}, and
\(\left\| \left\| A \right\|_{n} \right\|_{m} \simeq \left\| A \right\|_{n}\)
by \Cref{trun-n-type} since \(\left\| A \right\|_{n}\) is contractible and
\emph{a fortiori} an \(m\)-type. So \(\left\| A \right\|_{m}\) is
contractible. The case for \(n\)-connected functions is similar. \end{proof}

We can also say something about the lower homotopy groups of an
\(n\)-connected type:

\begin{lemma}[\protect{\cite[Lemma 8.3.2]{hott}}]
 If \(A\) is a pointed
\(n\)-connected type, then \(\pi_{k}(A) \simeq 0\) for all \(k \leq n\).

\label{n-connected-pi-k-1} \end{lemma}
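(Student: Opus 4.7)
The plan is to reduce $\pi_k(A) \equiv \|\Omega^k(A)\|_0$ to a loop space of a contractible type by commuting truncation past $\Omega^k$, then observe that loop spaces of contractible types are contractible.

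First I would apply \Cref{truncation-omega-commute} with truncation level $0$ to rewrite
\[\pi_k(A) \equiv \|\Omega^k(A)\|_0 \simeq \Omega^k(\|A\|_{0+k}) \equiv \Omega^k(\|A\|_k).\]
This reduces the problem to showing that $\Omega^k(\|A\|_k)$ is contractible (equivalently, $\simeq 0 \equiv \mathbb{1}$).

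Next I would show that $\|A\|_k$ is contractible whenever $k \leq n$. Since $A$ is $n$-connected, $\|A\|_n$ is contractible by definition. A contractible type is a $(-2)$-type and hence, by the cumulativity of $n$-types, a $k$-type for every $k \geq -2$. Therefore $|-|_k : \|A\|_n \to \|\|A\|_n\|_k$ is an equivalence by \Cref{trun-n-type}, so $\|\|A\|_n\|_k$ is contractible. Combining this with \Cref{trun-trun}, which gives $\|A\|_k \simeq \|\|A\|_n\|_k$ (since $k \leq n$), we conclude that $\|A\|_k$ is contractible.

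Finally, since $\|A\|_k$ is a contractible pointed type (pointedness is inherited from $A$ via $|\star_A|_k$), it is equivalent to $\mathbb{1}$ as pointed types, and a straightforward induction on $k$ shows that $\Omega^k(\mathbb{1})$ is contractible: the base case $k = 0$ is immediate, and the inductive step uses that the identity type at the unique point of a contractible type is again contractible. Transporting this along the equivalence yields $\Omega^k(\|A\|_k) \simeq \mathbb{1}$, and hence $\pi_k(A) \simeq 0$, as desired.

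The only mild obstacle is bookkeeping: one must be careful that the equivalence $\|A\|_k \simeq \|A\|_n$ is one of \emph{pointed} types so that the induced equivalence on $\Omega^k$ makes sense; but this is automatic since the equivalences produced by $|-|_k$ and by \Cref{trun-trun} send basepoints to basepoints by construction. No deep new idea is needed — everything follows mechanically from the commutation of truncation with loop spaces together with cumulativity of both truncations and $n$-types.
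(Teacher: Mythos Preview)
Your proposal is correct and follows essentially the same route as the paper: commute truncation past $\Omega^{k}$ via \Cref{truncation-omega-commute}, then use \Cref{trun-trun} together with the contractibility of $\|A\|_{n}$ to conclude that $\|A\|_{k}$ is contractible, whence $\Omega^{k}(\|A\|_{k}) \simeq \mathbb{1}$. The paper compresses this into a single chain of equivalences, but the logical content is identical.
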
 \begin{proof}We calculate
\[\pi_{k}(A) \equiv \left\| {\Omega^{k}(A)} \right\|_{0} \simeq \Omega^{k}\left( \left\| A \right\|_{k} \right) \simeq \Omega^{k}\left( \left\| \left\| A \right\|_{n} \right\|_{k} \right) \simeq \Omega^{k}\left( \left\| \mathbb{1} \right\|_{k} \right) \simeq \Omega^{k}\left( \mathbb{1} \right) \simeq \mathbb{1},\]
by \Cref{truncation-omega-commute} and \Cref{trun-trun}. \end{proof}

\begin{corollary} \(\pi_{k}\left( {\mathbb{S}}^{n} \right) \simeq 0\) for
\(k < n\). \label{pk-sn-k-less} \end{corollary}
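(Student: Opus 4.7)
The plan is to combine the two results that immediately precede the corollary. By \Cref{s-n-connected}, the sphere $\mathbb{S}^n$ is a pointed $(n-1)$-connected type for every $n:\mathbb{N}$. Then by \Cref{n-connected-pi-k-1} applied with the connectedness level $n-1$ in place of $n$, we obtain $\pi_k(\mathbb{S}^n) \simeq 0$ for all $k \leq n-1$, which is exactly the range $k < n$.

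Concretely, I would first invoke \Cref{s-n-connected} to get a term witnessing that $\|\mathbb{S}^n\|_{n-1}$ is contractible, and I would point out that $\mathbb{S}^n$ is pointed (the basepoint \texttt{base} was fixed at the end of \Cref{sec-pushout}), so the hypotheses of \Cref{n-connected-pi-k-1} are satisfied. Then I would apply that lemma directly to conclude $\pi_k(\mathbb{S}^n) \simeq 0$ whenever $k \leq n-1$.

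There is essentially no obstacle here: the corollary is just a specialisation of the previous lemma to the sphere, using the fact that spheres are one level less connected than their dimension. The only minor bookkeeping is the off-by-one shift from ``$n$-connected'' to ``$(n-1)$-connected'', which translates the bound $k \leq n$ in \Cref{n-connected-pi-k-1} into the bound $k < n$ of the corollary.
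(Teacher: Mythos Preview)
Your proposal is correct and matches the paper's proof exactly: the paper simply cites \Cref{s-n-connected} and \Cref{n-connected-pi-k-1}. Your added remarks about pointedness and the off-by-one shift are accurate but go slightly beyond the paper's one-line justification.
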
 \begin{proof} By
\Cref{s-n-connected} and \Cref{n-connected-pi-k-1}. \end{proof}

We have `induction principles' for \(n\)-connected types and functions:
\begin{proposition}[Induction principle for \(n\)-connected types]
  Suppose \(n \geq - 2\) and \(A\) is an \((n + 1)\)-connected type. Given
  
  \begin{itemize}
    \item
    a dependent type \(P:A \rightarrow n\mathtt{\text{-}Type}\),
    \item
    \(a_{0}:A\) and
    \item
    \(p:P\left( a_{0} \right)\),
  \end{itemize}
  there is a dependent function \(f:(x:A) \rightarrow P(x)\) such that
  \(f\left( a_{0} \right): \equiv p\).
\label{induction-connected} \end{proposition}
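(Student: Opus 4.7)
The plan is to factor the dependent type $P$ through the truncation $\|A\|_{n+1}$, exploit the contractibility of $\|A\|_{n+1}$, and then define $f$ by transport.

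First, I would apply the recursion principle of $\|A\|_{n+1}$ (see \Cref{sec-trun}) to the map $P : A \to n\mathtt{\text{-}Type}$. This is legitimate because the excerpt states that $n\mathtt{\text{-}Type}$ is itself an $(n+1)$-type, so it serves as a valid target for the recursor out of the $(n+1)$-truncation. This yields a map $\bar{P} : \|A\|_{n+1} \to n\mathtt{\text{-}Type}$ together with the judgemental equation $\bar{P}(|x|_{n+1}) \equiv P(x)$ for every $x : A$. In particular, $p : P(a_0) \equiv \bar{P}(|a_0|_{n+1})$.

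Next, since $A$ is $(n+1)$-connected by hypothesis, $\|A\|_{n+1}$ is contractible. Unfolding the definition of contractibility, I obtain a centre $c : \|A\|_{n+1}$ and a contraction $\alpha : (y : \|A\|_{n+1}) \to c = y$. To make the computation at $a_0$ transparent, I replace $\alpha$ by the normalised contraction $\gamma(y) :\equiv \alpha(|a_0|_{n+1})^{-1} \ct \alpha(y)$; this reroutes the centre to $|a_0|_{n+1}$ and, by the inverse law for concatenation, gives a propositional equality $\gamma(|a_0|_{n+1}) = \mathtt{refl}_{|a_0|_{n+1}}$. I then define
\[f(x) :\equiv \mathtt{transport}^{\bar{P}}\bigl(\gamma(|x|_{n+1})\bigr)(p),\]
which lives in $\bar{P}(|x|_{n+1}) \equiv P(x)$, as required.

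For the computation rule, $f(a_0) \equiv \mathtt{transport}^{\bar{P}}\bigl(\gamma(|a_0|_{n+1})\bigr)(p)$, and applying $\mathtt{ap}$ of $\lambda q. \mathtt{transport}^{\bar{P}}(q)(p)$ to the propositional equality $\gamma(|a_0|_{n+1}) = \mathtt{refl}$, together with $(\mathtt{refl})_* \equiv \mathrm{id}$ from \Cref{transport}, yields $f(a_0) = p$. The main obstacle I anticipate is bookkeeping around the computation rule: as stated, the proposition asserts the \emph{judgemental} equality $f(a_0) \equiv p$, whereas the construction above only gives a propositional one, since the inverse law $\alpha(|a_0|_{n+1})^{-1} \ct \alpha(|a_0|_{n+1}) = \mathtt{refl}$ is propositional. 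The intended reading is therefore the propositional equality (as is standard for induction principles derived from contractibility, cf.\ \Cref{strong-path-induction} and \Cref{transport-path-induction}); alternatively one may absorb the mismatch by redefining $f(a_0)$ to be $p$ and modifying $f$ on the rest of $A$ accordingly, which only changes $f$ up to propositional equality.
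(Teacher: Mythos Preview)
Your argument is correct and follows essentially the same route as the paper: factor \(P\) through \(\lVert A\rVert_{n+1}\) via the recursion principle (using that \(n\mathtt{\text{-}Type}\) is an \((n+1)\)-type), then use contractibility of \(\lVert A\rVert_{n+1}\) to build a section, and precompose with \(|{-}|_{n+1}\). The paper compresses your transport-along-the-contraction step into the single clause ``since \(\lVert A\rVert_{n+1}\) is contractible, there is \(f':(x:\lVert A\rVert_{n+1})\to P'(x)\) with \(f'(|a_0|_{n+1}):\equiv p\)'', which is exactly what you unpack; your observation that the computation rule is only propositional is a fair caveat that the paper's use of \(:\equiv\) glosses over.
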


\begin{proof}\redDiamond Since \(n\mathtt{\text{-}Type}\) is an
\((n + 1)\)-type, by the recursion principle of
\(\left\| A \right\|_{n + 1}\), there is a dependent type
\(P^{\prime}:\left\| A \right\|_{n + 1} \rightarrow n\mathtt{\text{-}Type}\)
such that \(P^{\prime}\left( |a|_{n + 1} \right): \equiv P(a)\). Since
\(\left\| A \right\|_{n + 1}\) is contractible, there is a function
\(f^{\prime}:\left( x:\left\| A \right\|_{n + 1} \right) \rightarrow P^{\prime}(x)\)
such that
\(f^{\prime}\left( \left| a_{0} \right|_{n + 1} \right): \equiv p:P\left( a_{0} \right) \equiv P^{\prime}\left( \left| a_{0} \right|_{n + 1} \right)\).
Therefore, we obtain
\(f: \equiv \left( f^{\prime}\circ| - |_{n + 1} \right):(a:A) \rightarrow P^{\prime}\left( |a|_{n + 1} \right) \equiv P(a)\).
\end{proof}

\begin{proposition}[`Induction principle' for \(n\)-connected maps, \protect{\cite[Lemma 7.5.7]{hott}}]
  Suppose \(n \geq - 2\),
  \(A,B:\Type\), and \(f:A \rightarrow B\). Then \(f\) is
  \(n\)-connected, if and only if for any
  \(P:B \rightarrow n\mathtt{\text{-}Type}\), the map
  \[\lambda s.s\circ f:\left( (b:B) \rightarrow P(b) \right) \rightarrow \left( (a:A) \rightarrow P\left( f(a) \right) \right)\]
  is an equivalence. 
\label{connected-map-converse} \end{proposition}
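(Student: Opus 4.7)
The plan is to prove the two directions separately. The forward direction ($n$-connected implies the precomposition map is an equivalence) is handled by constructing an explicit quasi-inverse using contractibility of the truncated fibre; the backward direction is handled by applying the hypothesis to a cleverly chosen family and then exploiting path induction.

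For the forward direction, assume $f$ is $n$-connected. Given $P : B \to n\mathtt{\text{-}Type}$ and $d : (a:A) \to P(f(a))$, I would construct $\Psi(d) : (b:B) \to P(b)$ as follows. For each $b : B$, define $\phi_b : \mathtt{fib}_f(b) \to P(b)$ by $\phi_b(x, p) :\equiv \mathtt{transport}^{P}(p)(d(x))$; since $P(b)$ is an $n$-type, this factors through the $n$-truncation as $\tilde{\phi}_b : \left\| \mathtt{fib}_f(b) \right\|_n \to P(b)$ via the recursion principle. Applying $\tilde{\phi}_b$ to the centre $c_b$ of the contractible type $\left\| \mathtt{fib}_f(b) \right\|_n$ gives $\Psi(d)(b)$. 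To verify $\Phi \circ \Psi \sim \mathrm{id}$ (where $\Phi :\equiv \lambda s.\,s \circ f$), note that by contractibility $c_{f(a)} = \left| (a, \mathtt{refl}_{f(a)}) \right|_n$, whence $\Psi(d)(f(a)) = \phi_{f(a)}(a, \mathtt{refl}) = d(a)$. To verify $\Psi \circ \Phi \sim \mathrm{id}$, given $s : (b:B) \to P(b)$, let $\psi_b(x,p) :\equiv \mathtt{transport}^P(p)(s(f(x)))$; the target equality $\tilde{\psi}_b(u) = s(b)$ varies in an $(n-1)$-type as $u : \left\| \mathtt{fib}_f(b) \right\|_n$ varies, so truncation induction reduces it to $p_*(s(f(x))) = s(b)$, which is supplied by $\mathtt{apd}_s(p)$. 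Setting $u :\equiv c_b$ and applying function extensionality yields the desired homotopy.

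For the backward direction, apply the hypothesis to the family $P(b) :\equiv \left\| \mathtt{fib}_f(b) \right\|_n$, which is an $n$-type. The canonical dependent function $d(a) :\equiv \left| (a, \mathtt{refl}_{f(a)}) \right|_n$ has, under the inverse of the precomposition equivalence, a preimage $s : (b:B) \to \left\| \mathtt{fib}_f(b) \right\|_n$ with $s(f(a)) = \left| (a, \mathtt{refl}_{f(a)}) \right|_n$. I claim $s(b)$ is the centre of contraction of $\left\| \mathtt{fib}_f(b) \right\|_n$; it remains to prove $(b:B) \to (y : \left\| \mathtt{fib}_f(b) \right\|_n) \to (y = s(b))$. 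Truncation induction on $y$ (the inner goal is an identity type in an $n$-type, hence an $(n-1)$-type) reduces this to $(b : B) \to ((x,p) : \mathtt{fib}_f(b)) \to (\left| (x,p) \right|_n = s(b))$. Uncurrying and reordering yields $(x:A) \to (b:B) \to (p : f(x) = b) \to (\left| (x,p) \right|_n = s(b))$. With $x$ fixed, the endpoint $b$ of $p$ is free, so path induction on $p$ reduces the task to $\left| (x, \mathtt{refl}_{f(x)}) \right|_n = s(f(x))$, which is exactly the defining property of $s$.

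The main obstacle I anticipate is the backward direction: the crucial manoeuvre is to \emph{uncurry} the goal so that the endpoint $b$ of $p$ remains free before invoking path induction; a direct attempt with both endpoints pinned (at $f(x)$ and $b$) would fail. Related bookkeeping of truncation levels—noting that identity types of an $n$-type are $(n-1)$-types and hence still $n$-types by cumulativity (\Cref{connected-below} style reasoning via the definitional recursion)—is required at several places to ensure that truncation induction is legitimate.
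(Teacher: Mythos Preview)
The paper does not actually supply a proof of this proposition; it is stated with a citation to \cite[Lemma 7.5.7]{hott} and immediately followed by an informal explanatory paragraph, so there is no in-paper argument to compare against. Your proof is correct and is essentially the standard argument from the cited source: the forward direction via factoring through the truncated fibre and evaluating at its centre, and the backward direction via instantiating \(P(b):\equiv\left\|\mathtt{fib}_f(b)\right\|_n\), extracting a section \(s\), and then collapsing the contraction goal by truncation induction followed by path induction after reordering so that the endpoint \(b\) is free. One small bibliographic slip: the cumulativity you invoke for \(n\)-types is the paper's citation of \cite[Theorem 7.1.7]{hott}, not \Cref{connected-below}, which concerns \(n\)-connected types rather than \(n\)-types.
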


Intuitively, \Cref{induction-connected} says that an \((n + 1)\)-connected
type `looks like' a contractible type to any family of \(n\)-types, and
\Cref{connected-map-converse} says that in the presence of an \(n\)-connected
map \(f:A \rightarrow B\), a family of \(n\)-types over \(B\) sees \(B\)
as being `generated' by \(A\) \protect{\cite[Section 2]{blakers}}.

Finally, we mention that an \(n\)-connected function induces an
equivalence on \(n\)-truncations:

\begin{lemma}[\protect{\cite[Lemma 7.5.14]{hott}}]
 If \(f:A \rightarrow B\) is
\(n\)-connected, then it induces an equivalence
\(\left\| A \right\|_{n} \simeq \left\| B \right\|_{n}\). 
\label{connected-induce-equivalence} \end{lemma}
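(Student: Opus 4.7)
The plan is to construct a quasi-inverse of \(\|f\|_{n}:\|A\|_{n}\to\|B\|_{n}\) by invoking the induction principle for \(n\)-connected maps (\Cref{connected-map-converse}) twice: once to build the inverse, and once to verify one of the two homotopies.

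First, I apply \Cref{connected-map-converse} to the constant family \(P:\equiv\lambda b.\|A\|_{n}:B\to n\mathtt{\text{-}Type}\) (which is well-defined since \(\|A\|_n\) is an \(n\)-type). Since \(f\) is \(n\)-connected, precomposition with \(f\) yields an equivalence \((B\to\|A\|_n)\simeq(A\to\|A\|_n)\), so the map \(|-|_{n}^{A}:A\to\|A\|_{n}\) has a preimage \(g:B\to\|A\|_{n}\) with \(g\circ f\sim|-|_{n}^{A}\). Next, since \(\|A\|_{n}\) is an \(n\)-type, the recursion principle of \(\|B\|_{n}\) lifts \(g\) to \(\tilde g:\|B\|_{n}\to\|A\|_{n}\) satisfying \(\tilde g(|b|_{n}):\equiv g(b)\). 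This \(\tilde g\) is the candidate inverse of \(\|f\|_{n}\).

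For the homotopy \(\tilde g\circ\|f\|_{n}\sim\operatorname{id}_{\|A\|_{n}}\), I would use truncation induction: the goal \(\tilde g(\|f\|_{n}(x))=x\) lives in an \((n-1)\)-type (identity in an \(n\)-type), hence in an \(n\)-type by cumulativity, so it suffices to treat \(x\) of the form \(|a|_{n}\). Then \(\tilde g(\|f\|_{n}(|a|_{n}))=\tilde g(|f(a)|_{n})=g(f(a))=|a|_{n}\), using the computation rules and the chosen homotopy \(g\circ f\sim|-|_{n}^{A}\).

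For the other homotopy \(\|f\|_{n}\circ\tilde g\sim\operatorname{id}_{\|B\|_{n}}\), truncation induction on \(\|B\|_{n}\) reduces the task to exhibiting \(h\sim|-|_{n}^{B}\) as maps \(B\to\|B\|_{n}\), where \(h(b):\equiv\|f\|_{n}(g(b))\). The key step I expect to be the main subtlety is here: I would apply \Cref{connected-map-converse} a second time, now with the constant family \(\lambda b.\|B\|_{n}\), so that precomposition with \(f\) is an equivalence \((B\to\|B\|_{n})\simeq(A\to\|B\|_{n})\). Both \(h\circ f\) and \(|-|_{n}^{B}\circ f\) compute, by unfolding, to \(a\mapsto|f(a)|_{n}\) (using \(g\circ f\sim|-|_{n}^{A}\) and the computation rule for \(\|f\|_n\)); injectivity of the equivalence then forces \(h\sim|-|_{n}^{B}\), which closes the argument. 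The main conceptual obstacle is recognising that one cannot directly verify \(\|f\|_{n}(g(b))=|b|_{n}\) pointwise in \(b\) without going through \(f\), which is precisely what the second invocation of \Cref{connected-map-converse} supplies.
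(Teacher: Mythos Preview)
The paper does not supply its own proof of this lemma; it merely states it and cites the HoTT book. Your argument is correct and is the natural one given the tools developed in the paper: the two invocations of \Cref{connected-map-converse} (once to construct the candidate inverse \(\tilde g\), once to close the homotopy \(\|f\|_{n}\circ\tilde g\sim\operatorname{id}\)) do exactly what is needed, and the computations you outline go through.
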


\chapter{Hopf fibration}\label{sec-homotopy}

The centre of this chapter is the following definition:
\begin{definition}Let \(X\), \(Y\) be pointed types. Define the type of
\textbf{pointed maps} from \(X\) to \(Y\) as
\[\mathtt{Map}\hspace{0pt}_{\star}(X,Y): \equiv \sum_{f:X \rightarrow Y}\left( f\left( \star_{X} \right) =_{Y}^{} \star_{Y} \right).\]
We may say that \(f:X \rightarrow_{\star}Y\) is a pointed map
\emph{witnessed} by path
\(f_{\star}:f\left( \star_{X} \right) =_{Y}^{} \star_{Y}\). \end{definition}

For any pointed map \(f:X \rightarrow_{\star}Y\), the homotopy groups of
\(F \equiv \mathtt{fib}_{f}\left( \star_{Y} \right)\), \(X\) and \(Y\)
are related by a \emph{long exact sequence}, obtained from a \emph{fibre
sequence}, shown in \Cref{sec-fib-seq}. In particular, given a `pointed
fibration' (in the sense of \Cref{fib-fibration}), \(F\), \(X\), \(Y\)
correspond respectively to the fibre, total space and base space of the
fibration. In \Cref{sec-hopf-cons}, we work to build the \emph{Hopf
construction}, a fibration based on \emph{H-spaces}. In \Cref{sec-hopf-fib},
we demonstrate an H-space structure on \({\mathbb{S}}^{1}\) and derive
the \emph{Hopf fibration}, whose long exact sequence fascinatingly
connects the homotopy groups of \({\mathbb{S}}^{1}\),
\({\mathbb{S}}^{2}\) and \({\mathbb{S}}^{3}\).

\section{Fibre sequences}\label{sec-fib-seq}

We begin with a simple yet powerful observation.

\begin{lemma}  Let \(X\) and \(Y\) be pointed types and let
\(f:X \rightarrow_{\star}Y\) be witnessed by \(f_{\star}\). Let
\(F: \equiv \mathtt{fib}_{f}\left( \star_{Y} \right)\) and
\(i \equiv \pr_{1}:F \rightarrow X\). Then we have pointed
types and pointed maps:
\[F\overset{i}{\rightarrow}X\overset{f}{\rightarrow}Y.\] \label{fib-pointed-type} \end{lemma}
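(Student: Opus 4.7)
The plan is to unpack the definition of $F$ and exhibit a canonical basepoint, then check that the first projection preserves this basepoint judgementally.

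First I would recall from \Cref{fibre} that
\[F \equiv \mathtt{fib}_{f}(\star_{Y}) \equiv \sum_{x:X}\left( f(x) =_{Y}^{} \star_{Y} \right).\]
Since $f$ is pointed with witness $f_{\star} : f(\star_{X}) =_{Y}^{} \star_{Y}$, the pair $(\star_{X}, f_{\star})$ is a term of $F$. I would take this pair as the basepoint $\star_{F} : \equiv (\star_{X}, f_{\star})$, so that $(F, \star_{F})$ is a pointed type.

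Next, $i \equiv \pr_{1} : F \to X$ is defined by $\pr_{1}(x, p) : \equiv x$ from the induction principle of the $\Sigma$-type. Evaluating at the basepoint gives $i(\star_{F}) \equiv \pr_{1}(\star_{X}, f_{\star}) \equiv \star_{X}$, a judgemental equality by the computation rule for $\Sigma$-types. Thus $i$ is a pointed map witnessed by $\mathtt{refl}_{\star_{X}} : i(\star_{F}) =_{X}^{} \star_{X}$, giving a term of $\mathtt{Map}\hspace{0pt}_{\star}(F, X)$. Finally, $f : X \to_{\star} Y$ is pointed by assumption, witnessed by $f_{\star}$.

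There is essentially no obstacle here: the only creative choice is recognising that the pointedness witness $f_{\star}$ is exactly the data needed to construct the basepoint of $F$, after which all required equalities are judgemental. The lemma is mainly a bookkeeping observation that the fibre of a pointed map is canonically pointed and that the projection out of the fibre is automatically pointed — a setup which will be used in \Cref{sec-fib-seq} to iterate the construction and obtain the fibre sequence.
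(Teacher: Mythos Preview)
Your proof is correct and follows exactly the same approach as the paper: take $(\star_X, f_\star)$ as the basepoint of $F$, observe that $i(\star_X, f_\star) \equiv \star_X$ judgementally, and witness $i$ by $\mathtt{refl}_{\star_X}$. The paper's proof is simply a one-line version of what you wrote.
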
 
\begin{proof} \(F\) is pointed by
\(\left( \star_{X},f_{\star} \right)\), and since
\(i\left( \star_{X},f_{\star} \right) \equiv \star_{X}\), \(i\) is a
pointed map witnessed by \(\mathtt{refl}_{\star_{X}}\). \end{proof}

A special case is the following, a fibration over \(Y\) with fibre \(F\)
and total space \(X\).

\begin{corollary} Let \(Y\) be a pointed type and
\(P:Y \rightarrow \Type\) be a dependent type such that
\(P\left( \star_{Y} \right)\) is a pointed type. Then we have pointed
types and pointed maps:
\[P\left( \star_{Y} \right)\overset{i}{\rightarrow}\sum_{y:Y}P(y)\overset{f}{\rightarrow}Y,\]
called the \textbf{fibration sequence}, where
\(f \equiv \pr_{1}\) and
\(i \equiv \lambda z.\left( \star_{Y},z \right)\). \label{fib-fibration} \end{corollary}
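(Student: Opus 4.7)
The plan is to deduce this corollary as a direct instance of \Cref{fib-pointed-type}, specialised to $X :\equiv \sum_{y:Y} P(y)$ and $f :\equiv \pr_1$. First, I would equip the total space with basepoint $(\star_Y, \star_{P(\star_Y)})$, which is well-defined precisely because $P(\star_Y)$ is assumed pointed. Then $f \equiv \pr_1$ sends this basepoint to $\star_Y$ judgementally by the $\Sigma$-computation rule, so $f$ is a pointed map witnessed by $\mathtt{refl}_{\star_Y}$. Applying \Cref{fib-pointed-type} to this pointed $f$ immediately produces a pointed sequence $\mathtt{fib}_f(\star_Y) \overset{\pr_1}{\rightarrow} \sum_{y:Y} P(y) \overset{f}{\rightarrow} Y$.

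What remains is to identify $\mathtt{fib}_f(\star_Y)$ with $P(\star_Y)$ and to check that the first projection appearing above agrees with $i \equiv \lambda z.(\star_Y, z)$. Unfolding the fibre gives
\[\mathtt{fib}_f(\star_Y) \equiv \sum_{(y,z):\sum_{y:Y} P(y)} (y =_Y \star_Y).\]
By the standard reassociation equivalence for $\Sigma$-types (provable by iterated $\Sigma$-induction), this is equivalent to $\sum_{(y,p) : \sum_{y:Y} (y =_Y \star_Y)} P(y)$. The base of the outer $\Sigma$ is contractible --- a mirror image of \Cref{sigma-id-contractible}, given by the same path induction or by composing with path inversion --- with centre $(\star_Y, \mathtt{refl}_{\star_Y})$, so the whole expression is equivalent to $P(\star_Y)$. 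Tracing $z:P(\star_Y)$ backwards along this chain yields $((\star_Y, z), \mathtt{refl}_{\star_Y}):\mathtt{fib}_f(\star_Y)$, which under $\pr_1$ maps to $(\star_Y, z) \equiv i(z)$, exactly as required.

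I expect no real obstacle: the argument is essentially bookkeeping built on $\Sigma$-reassociation and the contractibility of a based path space. The only small care required is to verify that the equivalence $\mathtt{fib}_f(\star_Y) \simeq P(\star_Y)$ is one of \emph{pointed} types; indeed the basepoint $(\star_X, f_\star)$ of the fibre supplied by \Cref{fib-pointed-type} is in our case $((\star_Y, \star_{P(\star_Y)}), \mathtt{refl}_{\star_Y})$, which corresponds to $\star_{P(\star_Y)}$ under the equivalence. Hence the pointedness of $\pr_1$ in \Cref{fib-pointed-type} genuinely transports across to make $i$ a pointed map, witnessed by $\mathtt{refl}_{(\star_Y, \star_{P(\star_Y)})}$.
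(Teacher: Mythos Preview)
Your proposal is correct and follows essentially the same route as the paper: equip $\sum_{y:Y}P(y)$ with basepoint $(\star_Y,\star_{P(\star_Y)})$, apply \Cref{fib-pointed-type} to $f\equiv\pr_1$, and then identify $\mathtt{fib}_f(\star_Y)\simeq P(\star_Y)$ via contractibility of the based path space (the paper cites \protect{\cite[Lemma 4.8.1]{hott}} for this, which is precisely your $\Sigma$-reassociation plus \Cref{sigma-id-contractible}). Your additional check that the equivalence is pointed and that $i$ is the transported map is a welcome elaboration of what the paper leaves implicit.
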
 \begin{proof}\redDiamond
\(X \equiv \sum_{y:Y}P(y)\) has basepoint
\(\left( \star_{Y}, \star_{P\left( \star_{Y} \right)} \right)\) and
\(f\) is thus a pointed map. Then we can apply \Cref{fib-pointed-type}, but
\(F \equiv \mathtt{fib}_{f}\left( \star_{Y} \right) \simeq P\left( \star_{Y} \right)\)
by \protect{\cite[Lemma 4.8.1]{hott}} (which uses \Cref{sigma-id-contractible}). The map
\(i\) is as above under this equivalence. \end{proof}

The rest of this section is based on the more general \Cref{fib-pointed-type},
although later in practice we only use the case \Cref{fib-fibration}. Since
\(i:F \rightarrow X\) is again a pointed map, we apply \Cref{fib-pointed-type}
recursively to yield the following construction:

\begin{definition} Let \(X\) and \(Y\) be pointed types. The \textbf{fibre
sequence} of a pointed map \(f:X \rightarrow_{\star}Y\) witnessed by
\(f_{\star}\) is the infinite sequence of pointed types and pointed maps
\[\ldots\overset{f^{(n)}}{\rightarrow}X^{(n)}\overset{f^{(n - 1)}}{\rightarrow}X^{(n - 1)}\overset{f^{(n - 2)}}{\rightarrow}X^{(n - 2)} \rightarrow \ldots\overset{f^{(0)}}{\rightarrow}X^{(0)}.\]

We denote \(\star^{(n)}:X^{(n)}\) as the basepoint of \(X^{(n)}\) and
\(f_{\star}^{(n)}:f^{(n)}\left( \star^{(n + 1)} \right) =_{X^{(n)}}^{} \star^{(n)}\)
as the witnessing path of \(f^{(n)}\). The sequence is then defined
recursively by

\begin{itemize}
\item
  \(X^{(0)}: \equiv Y\), \(X^{(1)}: \equiv X\), \(f^{(0)}: \equiv f\),
  \(\star^{(0)}: \equiv \star_{Y}\), \(\star^{(1)}: \equiv \star_{X}\),
  \(f_{\star}^{(0)}: \equiv f_{\star}\);
\item
  If \(n \geq 2\), \[\begin{aligned}
  X^{(n)}: \equiv \mathtt{fib}_{f^{(n - 2)}}\left( \star^{(n - 2)} \right) & ,\quad f^{(n - 1)}: \equiv \pr_{1}:X^{(n)} \rightarrow X^{(n - 1)}, \\
   \star^{(n)}: \equiv \left( \star^{(n - 1)},f_{\star}^{(n - 2)} \right) & ,\quad f_{\star}^{(n - 1)}: \equiv \mathtt{refl}_{\star^{(n - 1)}}.
  \end{aligned}\]
\end{itemize}

We denote
\(F: \equiv \mathtt{fib}_{f}\left( \star_{Y} \right) \equiv X^{(2)}\).
\end{definition}

This fibre sequence can be (magically) turned into an equivalent form
where the loop spaces of \(F\), \(X\) and \(Y\) show up in a periodic
fashion. The following two ingredients are needed before we move on.

\begin{definition} If \(f:A \rightarrow_{\star}B\) is a type equivalence,
then \(f\) is a \textbf{pointed equivalence} and in this case \(A\) and
\(B\) are \textbf{pointed equivalent}. \end{definition} \begin{lemma}  The
\textbf{looping} of \(f:A \rightarrow_{\star}B\) defined as
\(\Omega f: \equiv \left( \lambda q.f_{\star}^{- 1}\ct\mathtt{ap}_{f}(q)\ct f_{\star} \right):\Omega A \rightarrow \Omega B\)
is also a pointed map. \label{loop-functor} \end{lemma}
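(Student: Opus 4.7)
The plan is to exhibit a witnessing path $(\Omega f)_\star : \Omega f(\mathtt{refl}_{\star_A}) =_{\Omega B} \mathtt{refl}_{\star_B}$, since $\Omega A$ and $\Omega B$ are already pointed types with basepoints $\mathtt{refl}_{\star_A}$ and $\mathtt{refl}_{\star_B}$ respectively. The fact that $\Omega f$ is a well-defined function $\Omega A \to \Omega B$ is immediate from its lambda definition, so the only content is producing this basepoint-preservation witness.

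First I would unfold the definition on the basepoint: $\Omega f(\mathtt{refl}_{\star_A}) \equiv f_\star^{-1} \ct \mathtt{ap}_f(\mathtt{refl}_{\star_A}) \ct f_\star$. By \Cref{ap}, $\mathtt{ap}_f(\mathtt{refl}_{\star_A}) \equiv \mathtt{refl}_{f(\star_A)}$ judgementally, so the expression reduces to $f_\star^{-1} \ct \mathtt{refl}_{f(\star_A)} \ct f_\star$. Next I would invoke the right identity law for path concatenation (a standard coherence obtained via path induction, cf. \cite[Lemma 2.1.4]{hott}) to obtain a path to $f_\star^{-1} \ct f_\star$, and then the left inverse law to obtain a further path to $\mathtt{refl}_{\star_B}$. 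Concatenating these two coherence paths yields the desired witness.

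There is essentially no obstacle here: the entire proof is a triple application of judgemental reduction and the groupoid coherence laws already proved in \Cref{sec-inf}. The only thing to be mildly careful about is that the identity and inverse laws for concatenation are propositional equalities rather than judgemental ones (they are not definitionally true for an arbitrary path $f_\star$, only when $f_\star$ is $\mathtt{refl}$), so the witness $(\Omega f)_\star$ is constructed by concatenating coherence 2-paths rather than appealing to definitional equality. With that caveat, the construction goes through and shows $\Omega f$ lifts to a term of $\mathtt{Map}_\star(\Omega A, \Omega B)$.
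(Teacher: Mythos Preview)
Your proposal is correct and follows essentially the same approach as the paper: the paper's proof is the one-line computation \((\Omega f)(\mathtt{refl}_{\star_A}) \equiv f_\star^{-1} \ct \mathtt{refl}_{f(\star_A)} \ct f_\star = \mathtt{refl}_{\star_B}\), and you have simply spelled out in more detail which coherence laws (identity and inverse for concatenation) realise that propositional equality.
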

\begin{proof}
\((\Omega f)\left( \mathtt{refl}_{\star_{A}} \right) \equiv f_{\star}^{- 1}\ct\mathtt{refl}_{f\left( \star_{A} \right)}\ct f_{\star} = \mathtt{refl}_{\star_{B}}\).
\end{proof}

We now work to reveal the first step of the alleged equivalence.

\begin{lemma}[\protect{\cite[Lemma 8.4.4]{hott}}]
 Let \(X\) and \(Y\) be pointed
types and let \(f:X \rightarrow_{\star}Y\). Then
\(X^{(3)} \simeq \Omega Y\) in the fibre sequence of \(f\). Further,
under the equivalence, \(f^{(2)}\) is identified with
\[\partial: \equiv \lambda r.\left( \star_{X},f_{\star}\ct r \right):\Omega Y \rightarrow F.\]

\label{X3-omega-Y} \end{lemma}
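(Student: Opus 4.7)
The plan is to unfold the definition of $X^{(3)}$, re-order the nested $\Sigma$-types, and invoke the contractibility of the based path space to collapse $X^{(3)}$ down to a single identity type, which is in turn equivalent to $\Omega Y$ via concatenation with $f_\star$.

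By definition, $X^{(3)} \equiv \mathtt{fib}_{f^{(1)}}\left(\star^{(1)}\right)$, where $f^{(1)} \equiv \pr_{1} : F \to X$ and $\star^{(1)} \equiv \star_{X}$. Unfolding $F \equiv \sum_{x:X}\left(f(x) =_{Y} \star_{Y}\right)$, we obtain
\[ X^{(3)} \simeq \sum_{x:X}\sum_{p:f(x) =_{Y} \star_{Y}}\left( x =_{X} \star_{X} \right). \]
Since the two inner factors do not depend on each other, I would reorder this to
\[ X^{(3)} \simeq \sum_{(x,q):\sum_{x:X}\left(x =_{X} \star_{X}\right)}\left( f(x) =_{Y} \star_{Y} \right). \]
By \Cref{sigma-id-contractible}, the indexing $\Sigma$-type is contractible with centre $\left(\star_{X}, \mathtt{refl}_{\star_{X}}\right)$. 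Using the standard fact that a $\Sigma$-type over a contractible base is equivalent to its fibre over the centre (which follows from the singleton-induction property of contractible types), this reduces to
\[ X^{(3)} \simeq \left( f(\star_{X}) =_{Y} \star_{Y} \right). \]
Finally, left concatenation with $f_\star^{-1}$ furnishes an equivalence $\left( f(\star_{X}) =_{Y} \star_{Y} \right) \simeq \Omega Y$, whose inverse sends $r : \Omega Y$ to $f_\star \ct r$; that this is an equivalence follows from \Cref{transport-functorality} applied to path concatenation, or directly by path induction on $f_\star$.

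Composing the chain, the resulting equivalence $e : \Omega Y \to X^{(3)}$ sends $r$ to $\left(\left(\star_{X}, f_\star \ct r\right), \mathtt{refl}_{\star_{X}}\right)$. Tracking basepoints, $e\left(\mathtt{refl}_{\star_{Y}}\right) = \left(\left(\star_{X}, f_\star\right), \mathtt{refl}_{\star_{X}}\right) \equiv \star^{(3)}$, so $e$ is a pointed equivalence. For the identification of $f^{(2)}$ with $\partial$, observe that $f^{(2)} \equiv \pr_{1} : X^{(3)} \to F$, so
\[ \left(f^{(2)} \circ e\right)(r) \equiv \pr_{1}\left(\left(\star_{X}, f_\star \ct r\right), \mathtt{refl}_{\star_{X}}\right) \equiv \left(\star_{X}, f_\star \ct r\right) \equiv \partial(r), \]
which holds judgementally, and by function extensionality gives $f^{(2)} \circ e = \partial$ as pointed maps. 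The main obstacle is not conceptual but notational: the careful bookkeeping of nested $\Sigma$-types and the verification that each equivalence in the chain respects basepoints. In a proof assistant these steps would require explicit transport computations, but informally they are all routine.
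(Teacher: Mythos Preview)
Your proof is correct and arrives at exactly the same explicit equivalence as the paper: your map $e$ coincides with the paper's $\psi\left(r\right) \equiv \left(\left(\star_{X}, f_{\star}\ct r\right), \mathtt{refl}_{\star_{X}}\right)$, and the identification $f^{(2)}\circ e \equiv \partial$ is the same computation the paper performs.

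The route differs in presentation. The paper proceeds concretely: it writes down $\varphi:X^{(3)}\to\Omega Y$ by $\left((x,p),q\right)\mapsto f_{\star}^{-1}\ct\mathtt{ap}_{f}\left(q^{-1}\right)\ct p$, writes down the inverse $\psi$, and checks $\psi\circ\varphi\sim\operatorname{id}$ and $\varphi\circ\psi\sim\operatorname{id}$ by hand (using \Cref{sigma-path} and \Cref{identity-path-1}). You instead reorder the $\Sigma$-type and invoke the contractibility of the based path space (\Cref{sigma-id-contractible}) together with the general fact that a $\Sigma$ over a contractible base collapses to its fibre at the centre, so the quasi-inverse verification is absorbed into these lemmas. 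Your approach is more structural and scales better (it is the standard ``$\Sigma$-type massage'' idiom); the paper's approach has the virtue of making the map $\varphi$ in the other direction explicit, which can be useful if one later needs to compute with it. One small remark: your claim that $e\left(\mathtt{refl}_{\star_{Y}}\right) = \star^{(3)}$ is propositional rather than judgemental, since $f_{\star}\ct\mathtt{refl}_{\star_{Y}} = f_{\star}$ is only a path in general; this does not affect the argument but is worth being precise about.
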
 

\begin{figure}[h]
    \centering
    \figFourOneSeven
    \caption{Diagram for \Cref{X3-omega-Y}.}
    \label{fig-417}
\end{figure}

\begin{proof}See \Cref{fig-417}. Using the
definition of fibres twice, a term of \(X^{(3)}\) is of the form
\(\left( (x,p),q \right)\), where \(x:X\), \(p:f(x) =_{Y}^{} \star_{Y}\)
and \(q:x =_{X}^{} \star_{X}\). We first define a function
\(\varphi:X^{(3)} \rightarrow \Omega(Y)\). Take any
\(\left( (x,p),q \right):X^{(3)}\). Then
\(\mathtt{ap}_{f}\left( q^{- 1} \right):f\left( \star_{X} \right) =_{Y}^{}f(x)\).
Thus since \(f_{\star}:f\left( \star_{X} \right) =_{Y}^{} \star_{Y}\),
we can produce
\(f_{\star}^{- 1}\ct\mathtt{ap}_{f}\left( q^{- 1} \right)\ct p:\Omega(Y)\).
Conversely, we define \(\psi:\Omega(Y) \rightarrow X^{(3)}\) by
\[\lambda r.\left( \left( \star_{X},f_{\star}\ct r \right),\mathtt{refl}_{\star_{X}} \right).\]
Now take any \(\left( (x,p),q \right):X^{(3)}\), then \[\begin{aligned}
\psi(\varphi((x,p),q)) & \equiv \psi\left( f_{\star}^{- 1}\ct\mathtt{ap}_{f}\left( q^{- 1} \right)\ct p \right) \\
 & = \left( \left( \star_{X},\mathtt{ap}_{f}\left( q^{- 1} \right)\ct p \right),\mathtt{refl}_{\star_{X}} \right) \\
 & = \left( (x,p),q \right),
\end{aligned}\] where the last equality is by \Cref{sigma-path} and
\Cref{identity-path-1}. Conversely, for any \(r:\Omega(Y, \star_{Y})\),
\[\varphi(\psi(r)) \equiv \varphi(\left( \star_{X},f_{\star}\ct r \right),\mathtt{refl}_{\star_{X}}) = f_{\star}^{- 1}\ct\mathtt{ap}_{f}\left( \mathtt{refl}_{\star_{X}}^{- 1} \right)\ct f_{\star}\ct r = r.\]
Thus \(\psi\) forms a quasi-inverse of \(\varphi\). Also \(\psi\) and
\(\varphi\) are both pointed maps, so \(X^{(3)}\) and \(\Omega Y\) are
pointed equivalent. We therefore identify
\(f^{(2)}:X^{(3)} \rightarrow F\) with
\(\partial: \equiv f^{(2)}\circ\psi:\Omega Y \rightarrow F\), which can
be simplified as
\(\lambda r.\left( \star_{X},f_{\star}\ct r \right)\), easily a
pointed map. \end{proof}

By applying \Cref{X3-omega-Y} recursively to the fibre sequence, we have:

\begin{corollary}[\protect{\cite[Lemma 8.4.4]{hott}}]
 The fibre sequence of
\(f:X \rightarrow_{\star}Y\) can be equivalently written as
\[\rightarrow \Omega^{2}X\overset{\Omega^{2}f}{\rightarrow}\Omega^{2}Y\overset{- \Omega\partial}{\rightarrow}\Omega F\overset{- \Omega i}{\rightarrow}\Omega X\overset{- \Omega f}{\rightarrow}\Omega Y\overset{\partial}{\rightarrow}F\overset{i}{\rightarrow}X\overset{f}{\rightarrow}Y.\]
where \(i: \equiv \pr_{1}\) and the minus sign represents
composition with path inversion. 
\label{fibre-sequence-2} \end{corollary}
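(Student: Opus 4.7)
The plan is to proceed by induction on $n$, iterating \Cref{X3-omega-Y} to identify $X^{(n+3)} \simeq \Omega X^{(n)}$ and to match each $f^{(n+2)}$ with the signed looping of the appropriate map from the three-fold cycle $(f, i, \partial)$.

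For the base case, $X^{(0)} = Y$, $X^{(1)} = X$, $X^{(2)} = F$, $f^{(0)} = f$, $f^{(1)} = i$, and \Cref{X3-omega-Y} gives $X^{(3)} \simeq \Omega Y$ with $f^{(2)}$ identified as $\partial$. For the inductive step, suppose that for all indices up to $3k + 2$ we have identified $X^{(3k + j)}$ with $\Omega^{k}$ applied to the appropriate member of $(Y, X, F)$, and $f^{(3k + j)}$ with $(-1)^{k} \Omega^{k} g_{j}$ where $(g_{0}, g_{1}, g_{2}) = (f, i, \partial)$. Since the fibre-sequence construction at stage $n + 1$ depends only on the pointed map $f^{(n)}$, we may re-apply \Cref{X3-omega-Y} to the sub-sequence rooted at $X^{(3k)}$, obtaining $X^{(3k+3)} \simeq \Omega X^{(3k)} \simeq \Omega^{k+1} Y$ and an identification of $f^{(3k+2)}$ with the $\partial$-map attached to $f^{(3k)}$ under its current identification. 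Three consecutive such applications advance $k$ to $k + 1$ and complete the induction.

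The main obstacle is showing that the $\partial$-map of $f^{(3k)}$ (under the identification $f^{(3k)} = (-1)^{k} \Omega^{k} f$) agrees with the signed looping claimed in the target sequence. This reduces to a calculation with the formula $\partial g = \lambda r.\, (\star, g_{\star} \ct r)$: when $g$ is itself a signed looping $\pm \Omega h$, the witnessing path $g_{\star}$ carries a sign coming from \Cref{loop-functor}, and prepending it to $r$ together with the inversion built into $\pm \Omega h$ introduces exactly one extra sign flip relative to $\Omega$ of the $\partial$-map of $h$. Hence each application of \Cref{X3-omega-Y} flips the overall sign once, so across three applications the global sign pattern advances from $(-1)^{k}$ to $(-1)^{k+1}$, matching the stated period-three structure $\Omega^{k} f, \Omega^{k} i, \Omega^{k} \partial$ with sign $(-1)^{k}$.

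Finally, each equivalence produced by \Cref{X3-omega-Y} is pointed and commutes up to pointed homotopy with the maps on either side, so assembling the resulting ladder yields a pointed equivalence between the fibre sequence of $f$ and the target infinite sequence.
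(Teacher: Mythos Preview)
Your overall strategy---iterate \Cref{X3-omega-Y} at every stage---matches the paper, which simply says ``By applying \Cref{X3-omega-Y} recursively to the fibre sequence.'' So on the level of approach you are aligned.

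The gap is in your third paragraph, where you try to locate the origin of the minus sign. Your claim that ``the witnessing path $g_\star$ carries a sign coming from \Cref{loop-functor}'' and that this, together with the inversion in $\pm\Omega h$, produces the flip is too vague to be a proof, and it misidentifies the mechanism. The sign does not come from the pointed witness of a looped map. Rather, once \Cref{X3-omega-Y} supplies equivalences $\varphi_n : X^{(n+3)} \xrightarrow{\sim} \Omega X^{(n)}$ for every $n$, the map $f^{(n+3)} \equiv \pr_1 : X^{(n+4)} \to X^{(n+3)}$ must be transported through \emph{two} such equivalences: you compute $\varphi_n \circ \pr_1 \circ \varphi_{n+1}^{-1} : \Omega X^{(n+1)} \to \Omega X^{(n)}$ directly using the explicit formulas $\varphi((x,p),q) = f_\star^{-1} \ct \mathtt{ap}_f(q^{-1}) \ct p$ and $\psi(r) = ((\star, f_\star \ct r), \mathtt{refl})$ from the proof of \Cref{X3-omega-Y}. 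The $q^{-1}$ in $\varphi$ is what introduces the inversion, yielding exactly $-\Omega f^{(n)}$. Iterating this relation gives the claimed pattern $f^{(3k+j)} = (-1)^k \Omega^k g_j$.

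Your framing in terms of ``the $\partial$-map of $(-1)^k\Omega^k f$'' runs into a related difficulty: that map lands in $\mathtt{fib}_{(-1)^k\Omega^k f}(\star)$, not in $\Omega^k F$, so comparing it with $(-1)^k\Omega^k\partial$ presupposes an identification of fibres that is itself part of what you are trying to establish, and whose effect on the sign you have not tracked. The cleaner route is the direct transport calculation above, which is what the HoTT book (cited by the paper) actually does.
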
 We will then show that this
fibre sequence, when set-truncated, becomes an \emph{exact sequence},
defined below. \begin{definition} Let \(A,B\) be sets and let
\(f:A \rightarrow B\), then the \textbf{image} of \(f\) is defined as
\(\operatorname{im}(f): \equiv \sum_{b:B}\left\| {\mathtt{fib}_{f}(b)} \right\|_{- 1}.\)
If \(A,B\) are pointed sets and let \(f:A \rightarrow_{\star}B\), then
the \textbf{kernel} of \(f\) is defined as
\(\ker(f): \equiv \mathtt{fib}_{f}\left( \star_{B} \right)\). \end{definition}

\begin{definition} An \textbf{exact sequence of pointed sets} is a sequence
of pointed sets and pointed maps:
\[\ldots \rightarrow A^{(n + 1)}\overset{f^{(n)}}{\rightarrow}A^{(n)}\overset{f^{(n - 1)}}{\rightarrow}A^{(n - 1)} \rightarrow \ldots\]
such that \(\operatorname{im}(f^{(n)}) = \ker(f^{(n - 1)})\) for every
\(n\). The sequence is an \textbf{exact sequence of groups} if further
each \(A^{(n)}\) is a group and each \(f^{(n)}\) is a group
homomorphism. \end{definition}

\begin{lemma}[\protect{\cite[Theorem 8.4.6]{hott}} \footnote{\redDiamond A few typos in \protect{\cite[Theorem 8.4.6]{hott}} are fixed in this proof.}]
For
any \(F\overset{i}{\rightarrow}X\overset{f}{\rightarrow}Y\) given by
\Cref{fib-pointed-type},
\[\left\| F \right\|_{0}\overset{\left\| i \right\|_{0}}{\rightarrow}\left\| X \right\|_{0}\overset{\left\| f \right\|_{0}}{\rightarrow}\left\| Y \right\|_{0}\]
is an exact sequence of pointed sets. 
\label{les-lemma} \end{lemma}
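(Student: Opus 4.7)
The plan is to unfold the definition of exactness into a pointwise claim about subtypes of $\left\| X \right\|_{0}$. By definition, $\operatorname{im}\left( \left\| i \right\|_{0} \right) \equiv \sum_{x^{\prime} : \left\| X \right\|_{0}} \left\| \mathtt{fib}_{\left\| i \right\|_{0}}(x^{\prime}) \right\|_{- 1}$ and $\ker\left( \left\| f \right\|_{0} \right) \equiv \sum_{x^{\prime} : \left\| X \right\|_{0}} \left( \left\| f \right\|_{0}(x^{\prime}) =_{\left\| Y \right\|_{0}}^{} \star_{\left\| Y \right\|_{0}} \right)$, and in both cases the predicate on $x^{\prime}$ is a mere proposition (the left by construction, the right because $\left\| Y \right\|_{0}$ is a set). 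By function extensionality and univalence, the desired equality of subtypes reduces to a pointwise logical equivalence, so for each $x^{\prime}$ I need only provide maps between these two mere propositions in both directions.

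For the direction $\operatorname{im} \Rightarrow \ker$, fix $x^{\prime}$ and assume a term of $\left\| \mathtt{fib}_{\left\| i \right\|_{0}}(x^{\prime}) \right\|_{- 1}$. Since the target $\left\| f \right\|_{0}(x^{\prime}) =_{\left\| Y \right\|_{0}}^{} \star_{\left\| Y \right\|_{0}}$ is a mere proposition, I strip the $(- 1)$-truncation to obtain $(f^{\prime}, p)$ with $p : \left\| i \right\|_{0}(f^{\prime}) = x^{\prime}$, then apply truncation induction on $f^{\prime}$ (the goal still being a mere proposition) to assume $f^{\prime} \equiv |(x, q)|_{0}$ for $x : X$ and $q : f(x) =_{Y}^{} \star_{Y}$. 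Using $p$ to replace $x^{\prime}$ by $\left\| i \right\|_{0}(|(x, q)|_{0}) \equiv |x|_{0}$, the computation rule for $\left\| f \right\|_{0}$ reduces the goal to $|f(x)|_{0} =_{\left\| Y \right\|_{0}}^{} |\star_{Y}|_{0}$, which is witnessed by $\mathtt{ap}_{|-|_{0}}(q)$.

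For the direction $\ker \Rightarrow \operatorname{im}$, fix $x^{\prime}$ and a path $\beta : \left\| f \right\|_{0}(x^{\prime}) = \star_{\left\| Y \right\|_{0}}$. The goal is a mere proposition, so I truncation-induct on $x^{\prime}$ to assume $x^{\prime} \equiv |x|_{0}$, at which point $\beta$ has type $|f(x)|_{0} =_{\left\| Y \right\|_{0}}^{} |\star_{Y}|_{0}$. I then invoke \Cref{truncation-identity} to identify this with the mere proposition $\left\| f(x) =_{Y}^{} \star_{Y} \right\|_{- 1}$ and strip this truncation (the goal is still a mere proposition) to obtain $q : f(x) =_{Y}^{} \star_{Y}$. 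Then $(x, q) : F$, and the pair $(|(x, q)|_{0}, \mathtt{refl}_{|x|_{0}})$ inhabits $\mathtt{fib}_{\left\| i \right\|_{0}}(|x|_{0})$; applying $|-|_{- 1}$ yields the desired element of $\operatorname{im}\left( \left\| i \right\|_{0} \right)$.

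The routine verifications that $\left\| i \right\|_{0}$ and $\left\| f \right\|_{0}$ are pointed (using the paths $f_{\star}$ and $\mathtt{refl}_{\star_{X}}$ inherited from \Cref{fib-pointed-type}) are immediate from the computation rules of $\left\| \cdot \right\|_{0}$. The main subtlety throughout is bookkeeping: every time a truncation is stripped one must first confirm that the current goal is itself a mere proposition, and one must not confuse the $0$-truncation $\left\| \cdot \right\|_{0}$ with the $(- 1)$-truncations appearing in $\operatorname{im}$ and in \Cref{truncation-identity}. The sole non-formal step is the appeal to \Cref{truncation-identity}, which is precisely what converts an equality of classes in $\left\| Y \right\|_{0}$ into a merely-existing equality in $Y$ from which a genuine preimage in $F$ can be extracted.
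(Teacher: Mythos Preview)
Your proof is correct and follows essentially the same approach as the paper: both directions reduce to truncation inductions together with the key appeal to \Cref{truncation-identity} for the $\ker \Rightarrow \operatorname{im}$ direction. The only cosmetic difference is that in the $\operatorname{im} \Rightarrow \ker$ direction the paper argues slightly more directly, observing that $f\bigl(i(x,p)\bigr) \equiv f(x)$ is already identified with $\star_{Y}$ by $p$ and then invoking functoriality of $\left\|-\right\|_{0}$, rather than stripping truncations elementwise; the content is the same.
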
 \begin{proof}
\(\operatorname{im}(\left\| i \right\|_{0}) \subset \ker(\left\| f \right\|_{0})\).
Take any \((x,p):F\), then
\(f\left( i(x,p) \right) \equiv f(x) =_{Y}^{} \star_{Y}\) as witnessed
by \(p\). Thus \(\left\| {f\circ i} \right\|_{0}\) sends everything to
\(\left| \star_{Y} \right|_{0}\), and so does
\(\left\| f \right\|_{0}\circ\left\| i \right\|_{0}\) by the
functoriality of \(\left\| - \right\|_{0}\).

\(\ker(\left\| f \right\|_{0}) \subset \operatorname{im}(\left\| i \right\|_{0})\).
For any \(x^{\prime}:\left\| X \right\|_{0}\) and
\(p^{\prime}:\left\| f \right\|_{0}(x^{\prime}) =_{\left\| Y \right\|_{0}}^{}\left| \star_{Y} \right|_{0}\),
we claim that there merely exists \(t:F\) such that
\(\left| {i(t)} \right|_{0} =_{\left\| X \right\|_{0}}^{}x^{\prime}\). As
the goal is a mere proposition and \emph{a fortiori} a \(0\)-type, by
truncation induction, it suffices to assume \(x^{\prime}\) is \(|x|_{0}\)
for some \(x:X\). Then since
\(\left\| f \right\|_{0}\left( |x|_{0} \right) \equiv \left| {f(x)} \right|_{0}\),
we have
\(p^{\prime}:\left| {f(x)} \right|_{0} =_{\left\| Y \right\|_{0}}^{}\left| \star_{Y} \right|_{0}\),
which by \Cref{truncation-identity}, corresponds to some
\(p^{\prime \prime}:\left\| {f(x) =_{Y}^{} \star_{Y}} \right\|_{- 1}\).
Again by truncation induction, it suffices to assume \(p^{\prime \prime}\)
is \(|p|_{- 1}\) for some \(p:f(x) =_{Y}^{} \star_{Y}\). Thus we take
\(t: \equiv (x,p):F\), and then
\(\left| {i(t)} \right|_{0} \equiv |x|_{0} \equiv x^{\prime}\). \end{proof}

\begin{theorem}[\protect{\cite[Theorem 8.4.6]{hott}}]
   Let \(X,Y\) be pointed
types and let \(f:X \rightarrow_{\star}Y\). Let
\(F: \equiv \mathtt{fib}_{f}\left( \star_{Y} \right)\). Then we have
an exact sequence of pointed sets \[\begin{aligned}
\ldots \rightarrow & \pi_{k}(F) \rightarrow \pi_{k}(X) \rightarrow \pi_{k}(Y) \rightarrow \\
\ldots \rightarrow & \pi_{2}(F) \rightarrow \pi_{2}(X) \rightarrow \pi_{2}(Y) \\
 \rightarrow & \pi_{1}(F) \rightarrow \pi_{1}(X) \rightarrow \pi_{1}(Y) \\
 \rightarrow & \pi_{0}(F) \rightarrow \pi_{0}(X) \rightarrow \pi_{0}(Y).
\end{aligned}\] This is further an exact sequence of groups except the
\(\pi_{0}\) terms, and an exact sequence of abelian groups except the
\(\pi_{0}\) and \(\pi_{1}\) terms.   \label{fibre-les} \end{theorem}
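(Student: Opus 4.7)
The plan is to apply Lemma \ref{les-lemma} repeatedly along the infinite fibre sequence of $f$ given in Corollary \ref{fibre-sequence-2}, which already has the right shape
\[\ldots \to \Omega^2 X \xrightarrow{\Omega^2 f} \Omega^2 Y \xrightarrow{-\Omega \partial} \Omega F \xrightarrow{-\Omega i} \Omega X \xrightarrow{-\Omega f} \Omega Y \xrightarrow{\partial} F \xrightarrow{i} X \xrightarrow{f} Y.\]
For every three consecutive terms $A \to B \to C$ in this sequence, Lemma \ref{les-lemma} applies (the underlying data is precisely a pointed map together with its fibre inclusion, which is what the sequence is built out of), yielding an exact triple $\|A\|_0 \to \|B\|_0 \to \|C\|_0$ of pointed sets. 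Concatenating these exact triples along the full sequence gives the desired long exact sequence.

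Next I would make the identifications explicit. By the very definition $\pi_k(A) :\equiv \|\Omega^k A\|_0$, each term of the truncated sequence is literally $\pi_k$ of $F$, $X$, or $Y$; and by the functoriality of $\|-\|_0$ established in Proposition \ref{truncation-functor}, the set-truncations of $\Omega^k f$, $\Omega^k i$, and $\Omega^{k-1}\partial$ are exactly the maps on homotopy groups functorially induced by $f$, $i$, and $\partial$. The minus signs in Corollary \ref{fibre-sequence-2} denote post-composition with path inversion, which is a pointed self-equivalence of each loop space; since pre- or post-composition with an equivalence preserves images and kernels, the exactness of the truncated sequence is unaffected by signs, and one may cite the unsigned maps in the statement.

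For the algebraic structure, when $k \geq 1$ the path-concatenation operation turns $\pi_k(A)$ into a group, and each of $\Omega^k f$, $\Omega^k i$, $\Omega^{k-1}\partial$ respects concatenation up to homotopy (since $\mathtt{ap}_g(p \ct q) = \mathtt{ap}_g(p) \ct \mathtt{ap}_g(q)$ by a direct path induction, and the looping construction of Lemma \ref{loop-functor} is built on top of this), so the set-truncated maps are group homomorphisms. For $k \geq 2$, the Eckmann-Hilton argument cited earlier upgrades these groups to abelian groups with no additional work. The main obstacle I anticipate is purely bookkeeping: verifying that iteratively chasing the pointed equivalence $X^{(n+3)} \simeq \Omega X^{(n)}$ of Lemma \ref{X3-omega-Y} really does identify each connecting map $f^{(n+2)}$ with the signed looping of $f$, $i$, or $\partial$, so that the maps occurring in the long sequence are indeed the expected ones. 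Once that identification is organised, each instance of exactness is immediate from Lemma \ref{les-lemma}.
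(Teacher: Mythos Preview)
Your proposal is correct and follows essentially the same approach as the paper: recursively apply Lemma~\ref{les-lemma} to the fibre sequence of $f$ and then identify the truncated terms with the homotopy groups via Corollary~\ref{fibre-sequence-2}. In fact you supply more detail than the paper, which explicitly omits the treatment of the path-inversion signs and the verification of the group structure (``The rest of the proof, including some technicalities involving path inversion, is omitted'').
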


\begin{proof} By recursively applying \Cref{les-lemma} to the fibre sequence of
\(f\), the above is an exact sequence of pointed sets. The rest of the
proof, including some technicalities involving path inversion, is
omitted. \end{proof}

The following lemma is commonly known in classical algebraic topology.

\begin{lemma}[\protect{\cite[Lemma 8.4.7]{hott}}]
 Let
\(C \rightarrow A\overset{f}{\rightarrow}B \rightarrow D\) be an exact
sequence of abelian groups. Then (1) \(f\) is injective if \(C\) is
\(0\); (2) \(f\) is surjective if \(D\) is \(0\); (3) \(f\) is an
isomorphism if \(C\) and \(D\) are both \(0\). 
\label{exact-iso} \end{lemma}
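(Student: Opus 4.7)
Denote the exact sequence as $C \xrightarrow{g} A \xrightarrow{f} B \xrightarrow{h} D$, so exactness at $A$ amounts to a logical equivalence between $\|\mathtt{fib}_g(a)\|_{-1}$ and $(f(a) =_B e_B)$ for every $a:A$, and similarly at $B$. The plan is to follow the classical algebraic argument, being careful about the propositional truncations implicit in the definition of $\operatorname{im}$, and concluding with a HoTT-specific argument that "injective plus surjective plus set-valued" upgrades to an equivalence.

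For (1), assume $C \simeq 0$. Since $C$ is contractible, every $c:C$ equals $e_C$, so $g(c) = g(e_C) = e_A$; hence $\|\mathtt{fib}_g(a)\|_{-1}$ is inhabited precisely when $a =_A e_A$. Exactness at $A$ then gives that $f(a) =_B e_B$ implies $a =_A e_A$, i.e.\ $\ker(f)$ is trivial. Injectivity of $f$ follows by the standard abelian group manipulation: for $a, a':A$ with $f(a) = f(a')$, the homomorphism property gives $f(a \cdot a'^{-1}) = e_B$, so $a \cdot a'^{-1} = e_A$ and hence $a = a'$.

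For (2), assume $D \simeq 0$. Then $h(b) = e_D$ holds for every $b:B$, so $\ker(h) = B$. Exactness at $B$ yields $\|\mathtt{fib}_f(b)\|_{-1}$ for every $b:B$, which is exactly the HoTT statement that $f$ is surjective (i.e.\ $(-1)$-connected). For (3), combining (1) and (2) gives $f$ both injective and surjective; it remains to promote this to an equivalence. I will show each fibre $\mathtt{fib}_f(b) \equiv \sum_{a:A}(f(a) =_B b)$ is contractible. Since $A,B$ are sets, the fibre is a subtype of $A$; injectivity forces it to be a mere proposition, because any two inhabitants $(a,p), (a',p')$ satisfy $f(a) = b = f(a')$, hence $a = a'$ by injectivity, and the equality of the second components is automatic as $B$ is a set. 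Surjectivity then supplies $\|\mathtt{fib}_f(b)\|_{-1}$, which untruncates into the fibre itself since the fibre is already a $(-1)$-type (by \Cref{trun-n-type}), so the fibre is inhabited and propositional, hence contractible. Thus $f$ is an equivalence and, being a group homomorphism, a group isomorphism.

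The main obstacle will be the last step of (3): carefully marshalling the fact that injectivity collapses the fibre to a mere proposition so that the truncated inhabitation produced by surjectivity can be untruncated, yielding genuine contractibility rather than merely a weak form of bijection. The earlier implications (1) and (2) are routine once the subobject interpretation of exactness in terms of logically equivalent mere propositions is made explicit.
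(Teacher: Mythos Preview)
The paper does not actually prove this lemma; it simply introduces it as ``commonly known in classical algebraic topology'' and cites the HoTT book without further argument, so there is nothing to compare your approach against. Your proof is correct and complete: parts (1) and (2) are the standard algebraic arguments, translated carefully into the language of mere propositions and the paper's definitions of $\operatorname{im}$ and $\ker$, and your handling of (3) properly addresses the HoTT-specific point that ``injective and surjective'' must be upgraded to an equivalence by showing each fibre is a mere proposition (using injectivity together with $A$ and $B$ being sets) and then untruncating the surjectivity witness via \Cref{trun-n-type}.
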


From the next result, which characterises an \(n\)-connected function in
terms of homotopy groups, we formalise the idea that an \(n\)-connected
function is a generalisation of a surjective function. \begin{corollary}
Let \(X,Y\) be pointed types and let \(f:X \rightarrow_{\star}Y\) be
\(n\)-connected.

\begin{enumerate}
\item
  If \(k \leq n\), then \(\pi_{k}(f):\pi_{k}(X) \rightarrow \pi_{k}(Y)\)
  is an isomorphism;
\item
  If \(k = n + 1\), then
  \(\pi_{k}(f):\pi_{k}(X) \rightarrow \pi_{k}(Y)\) is surjective.
\end{enumerate}

\label{connect-surjection} \end{corollary}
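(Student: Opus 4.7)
The plan is to extract the relevant five-term segment from the long exact sequence (\Cref{fibre-les}) and apply \Cref{exact-iso} after showing the fibre's low-dimensional homotopy groups vanish.

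First I would observe that because $f:X\to_\star Y$ is $n$-connected, the fibre $F \equiv \mathtt{fib}_f(\star_Y)$ is by definition an $n$-connected type, and it is pointed by $(\star_X, f_\star)$ thanks to \Cref{fib-pointed-type}. Therefore \Cref{n-connected-pi-k-1} applies and yields $\pi_j(F)\simeq 0$ for every $0 \leq j \leq n$.

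Next I would write down the relevant segment of the long exact sequence around level $k$ given by \Cref{fibre-les}:
\[
\pi_{k+1}(Y) \to \pi_k(F) \to \pi_k(X) \xrightarrow{\pi_k(f)} \pi_k(Y) \to \pi_{k-1}(F).
\]
For part~(1) with $1 \leq k \leq n$, both flanking terms $\pi_k(F)$ and $\pi_{k-1}(F)$ are trivial (using $k \leq n$ and $k-1\leq n$ respectively), so \Cref{exact-iso}(3) immediately gives that $\pi_k(f)$ is an isomorphism. For part~(2) with $k = n+1 \geq 1$, only the right-hand term $\pi_{k-1}(F) = \pi_n(F) \simeq 0$ is needed, and \Cref{exact-iso}(2) delivers surjectivity of $\pi_k(f)$.

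The main obstacle I anticipate is the edge case $k=0$ in part~(1) (when $n \geq 0$) and $k=0$ in part~(2) (when $n = -1$), because $\pi_0$ has no group structure, no term $\pi_{-1}(F)$ appears on the right of the sequence, and \Cref{exact-iso} is phrased for abelian groups. For these cases I would argue separately: when $n \geq 0$, \Cref{connected-induce-equivalence} applied at level $0$ states that $\|f\|_0 \equiv \pi_0(f)$ is already an equivalence, handling part~(1) at $k=0$; and when $n = -1$, the $n$-connectedness of $f$ means every fibre is merely inhabited, from which a short truncation-induction argument on $\|Y\|_0$ shows $\pi_0(f)$ is surjective, handling part~(2). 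With these boundary adjustments, the proof reduces in the bulk of cases to a one-line application of \Cref{exact-iso} to the segment above.
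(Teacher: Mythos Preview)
Your proposal is correct and follows essentially the same route as the paper: extract the four-term segment of \Cref{fibre-les} around $\pi_k(f)$, show the flanking terms $\pi_k(F)$ and $\pi_{k-1}(F)$ vanish, and invoke \Cref{exact-iso}. The only cosmetic difference is that the paper re-derives $\pi_k(F)\simeq 0$ via \Cref{connected-below} and \Cref{truncation-omega-commute} rather than quoting \Cref{n-connected-pi-k-1}, and it simply omits the $k=0$ edge cases that you handle explicitly.
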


\begin{proof}We omit the case when \(k = 0\) and suppose \(k > 0\). We have
an exact sequence
\[\pi_{k}(F) \rightarrow \pi_{k}(X) \rightarrow \pi_{k}(Y) \rightarrow \pi_{k - 1}(F).\]
Since \(f\) is \(n\)-connected, by \Cref{connected-below}, \(f\) is also
\(k\)-connected when \(k \leq n\). Thus
\(\left\| F \right\|_{k} \equiv \left\| {\mathtt{fib}_{f}\left( \star_{Y} \right)} \right\|_{k}\)
is contractible, and
\(\pi_{k}(F) \equiv \left\| {\Omega^{k}(F)} \right\|_{0} \simeq \Omega^{k}\left( \left\| F \right\|_{k} \right)\)
(by \Cref{truncation-omega-commute}) is also contractible for \(k \leq n\).
Applying \Cref{exact-iso} yields the result. \end{proof}

\begin{remark} Set \(Y\) to \(\mathbb{1}\) and \Cref{connect-surjection} would
imply \Cref{n-connected-pi-k-1}. \end{remark}

\section{Hopf construction}
\label{sec-hopf-cons} To trigger all the machinery in
the previous section, we only need a fibration sequence as in
\Cref{fib-fibration}. We now build the \emph{Hopf construction}, a fibration
sequence derived from a \(0\)-connected \emph{H-space}.

\begin{definition} A type \(A\) is an \textbf{H-space} if it is equipped
with a base point \(e:A\), a binary operation
\(\mu:A \times A \rightarrow A\) and equalities \(\mu(e,a) = a\) and
\(\mu(a,e) = a\) for every \(a:A\). \end{definition}

\begin{lemma}  Let \(A\) be a \(0\)-connected H-space. Then
\(\mu(a, - ):A \rightarrow A\) and \(\mu( - ,a):A \rightarrow A\) are
equivalences for every \(a:A\). \end{lemma}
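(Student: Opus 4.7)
The plan is to invoke the induction principle for $n$-connected types (\Cref{induction-connected}) with $n = -1$, since $A$ is $0$-connected, reducing the problem to the single case where $a$ is the basepoint $e$.

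More concretely, I would define the dependent type $P : A \to (-1)\mathtt{\text{-}Type}$ by $P(a) :\equiv \mathtt{isequiv}(\mu(a,-))$; this is indeed valued in mere propositions because $\mathtt{isequiv}(f)$ is a $(-1)$-type for every $f$ (as stated after the definition of $\mathtt{isequiv}$ in \Cref{sec-homo}). To supply the data at the basepoint $e : A$, I note that the equality $\mu(e,a) = a$ for all $a$ given by the H-space structure gives, via function extensionality, a path $\mu(e,-) = \mathrm{id}_A$. Since $\mathrm{id}_A$ is trivially an equivalence (with itself as quasi-inverse), transporting this along the above path yields $\mathtt{isequiv}(\mu(e,-))$, i.e.\ a term of $P(e)$. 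Applying \Cref{induction-connected} with $a_0 :\equiv e$ produces a dependent function $(a:A) \to P(a)$, proving that $\mu(a,-)$ is an equivalence for every $a:A$.

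The argument for $\mu(-,a)$ is entirely symmetric: define $P'(a) :\equiv \mathtt{isequiv}(\mu(-,a))$, use the right unit law $\mu(a,e) = a$ to obtain $\mu(-,e) = \mathrm{id}_A$ and hence $P'(e)$, and conclude by \Cref{induction-connected} again.

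I do not expect any serious obstacle. The only things to check carefully are that $\mathtt{isequiv}$ is indeed a mere proposition (so that $P$ lands in $(-1)\mathtt{\text{-}Type}$ and \Cref{induction-connected} applies) and that we have enough to produce a \emph{path} $\mu(e,-) = \mathrm{id}_A$ rather than merely a pointwise equality — both handled by function extensionality together with the H-space laws. After that, the induction principle for $0$-connected types does all the work, which is a clean illustration of the slogan that an $(n+1)$-connected type looks like a contractible one to any family of $n$-types.
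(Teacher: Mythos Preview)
Your proposal is correct and follows essentially the same approach as the paper: define $P(a):\equiv\mathtt{isequiv}(\mu(a,-))$ as a family of mere propositions and apply \Cref{induction-connected} to reduce to the basepoint case $P(e)$. The paper is more terse (it simply says $P(e)$ ``holds by definition''), whereas you spell out the use of function extensionality and transport to obtain $\mathtt{isequiv}(\mu(e,-))$ from $\mathtt{isequiv}(\mathrm{id}_A)$, but the structure of the argument is identical.
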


\begin{proof} Let \(P:A \rightarrow \mathtt{Prop}\) be the dependent
proposition which sends \(a:A\) to the predicate that \(\mu(a, - )\)
(resp. \(\mu( - ,a)\)) is an equivalence. By \Cref{induction-connected}, it
suffices to prove \(P(e)\), which holds by definition. \end{proof}

The \emph{flattening lemma}, used in the next proof, is a result
concerning the total space of a fibration over an HIT constructed via
univalence \protect{\cite[Section 6.12]{hott}}.

\begin{theorem}[\protect{\cite[Lemma 8.5.7]{hott}}]
Let \(A\) be a
\(0\)-connected \(H\)-space. Then there is a dependent type (fibration)
\(P:\Sigma A \rightarrow \Type\) with fibre \(A\) and total
space \(\sum_{x:\Sigma A}P(x) \simeq A \ast A\), called the \textbf{Hopf
construction}. \label{hopf-construction} \end{theorem}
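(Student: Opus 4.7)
The plan is to construct $P$ by suspension recursion, compute the total space with the flattening lemma, and then match the resulting pushout span with the join span.

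First I would define the fibration $P : \Sigma A \to \Type$ by recursion on the suspension. I send both $\mathtt{north}$ and $\mathtt{south}$ to $A$, and for each meridian $\mathtt{merid}(a) : \mathtt{north} =_{\Sigma A} \mathtt{south}$ I need a path $A =_{\Type} A$. Since $A$ is a $0$-connected H-space, the preceding lemma ensures that $\mu(a,-) : A \to A$ is an equivalence, so I may take $\mathtt{ap}_P(\mathtt{merid}(a)) := \mathtt{ua}(\mu(a,-))$. By construction the fibre over $\mathtt{north}$ is $A$ (and similarly over $\mathtt{south}$), which gives the "fibre $A$" part.

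Next I would compute the total space $\sum_{x:\Sigma A} P(x)$ using the flattening lemma. Recall $\Sigma A$ is the pushout of the span $\mathbb{1}_N \xleftarrow{} A \xrightarrow{} \mathbb{1}_S$, with the constant families $a \mapsto A$ on each apex and the equivalences $\mu(a,-)$ over the legs. The flattening lemma then identifies the total space with the pushout of the span
\[
A \;\xleftarrow{\;\mathtt{snd}\;}\; A \times A \;\xrightarrow{\;\mu\;}\; A,
\]
where the left map records the second component (coming from $\sum_{a:A} A \simeq A \times A$ with projection to the "fibre" coordinate) and the right map uses the equivalence supplied by $P$ on meridians.

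Finally, I would compare this pushout with $A \ast A$, which by definition is the pushout of $A \xleftarrow{\mathtt{fst}} A \times A \xrightarrow{\mathtt{snd}} A$. It suffices to exhibit an equivalence of spans, leaving the outer $A$'s unchanged and defining
\[
\phi : A \times A \to A \times A, \qquad \phi(a,b) := (b,\, \mu(a,b)),
\]
so that $\mathtt{fst} \circ \phi = \mathtt{snd}$ and $\mathtt{snd} \circ \phi = \mu$. The map $\phi$ is an equivalence because for each fixed $b$ the map $\mu(-,b)$ is an equivalence by the preceding lemma, so $\phi$ has inverse $(b,c) \mapsto (\mu(-,b)^{-1}(c),\, b)$; functoriality of the pushout then yields $\sum_{x:\Sigma A} P(x) \simeq A \ast A$.

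The main obstacle is the correct invocation of the flattening lemma: one must verify that the data $(A, A, \mu(a,-))$ genuinely assembles into a fibration over the pushout $\Sigma A$ in the exact form the lemma expects, and that the induced "glueing" map on total spaces is $\mu$ rather than some twisted variant. Once that bookkeeping is right, the span equivalence via $\phi$ is purely formal.
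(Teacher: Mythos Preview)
Your proposal is correct and follows essentially the same route as the paper: define $P$ by suspension recursion using $\mathtt{ua}(\mu(a,-))$ on meridians, invoke the flattening lemma to identify the total space with the pushout of $A \xleftarrow{\mathtt{snd}} A\times A \xrightarrow{\mu} A$, and then exhibit an equivalence of spans to the join span. The only cosmetic difference is that the paper uses $\alpha(x,y):\equiv(\mu(x,y),y)$ to compare against the span $A \xleftarrow{\mathtt{snd}} A\times A \xrightarrow{\mathtt{fst}} A$, whereas your $\phi(a,b):\equiv(b,\mu(a,b))$ targets the span $A \xleftarrow{\mathtt{fst}} A\times A \xrightarrow{\mathtt{snd}} A$; these differ by a coordinate swap and both present $A\ast A$.
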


\begin{proof} We define a dependent type
\(P:\Sigma A \rightarrow \Type\) by
\(P\left( \mathtt{north} \right): \equiv A\),
\(P\left( \mathtt{south} \right): \equiv A\), and
\(\mathtt{ap}_{P}\left( \mathtt{merid}(a) \right):= \mathtt{ua}(\mu(a, - ))\)
since each \(\mu(a, - )\) is an equivalence \(A \rightarrow A\). By the
flattening lemma (see \protect{\cite[Lemma 8.5.3]{hott}} or \protect{\cite[Proposition 1.9.2]{brunerie}}), 
\(\sum_{x:\Sigma A}P(x)\) is equivalent to the pushout of the
span
\(A\overset{\mathtt{snd}}{\leftarrow}A \times A\overset{\mu}{\rightarrow}A.\)
We define a function \(\alpha:A \times A \rightarrow A \times A\) by
\(\alpha(x,y): \equiv \left( \mu(x,y),y \right)\), and \(\alpha\) is an
equivalence because it has quasi-inverse
\(\lambda(u,v).\left( \left( \mu( - ,v) \right)^{- 1}(u),v \right)\).
Therefore, we have an equivalence of spans 
\[\begin{tikzcd}
	A & {A \times A} & A \\
	A & {A \times A} & A,
	\arrow["{\mathtt{snd}}"', from=1-2, to=1-1]
	\arrow["\mu", from=1-2, to=1-3]
	\arrow["{\mathtt{snd}}", from=2-2, to=2-1]
	\arrow["{\mathtt{fst}}"', from=2-2, to=2-3]
	\arrow["{\mathtt{id}_A}"', from=1-1, to=2-1]
	\arrow["\alpha", from=1-2, to=2-2]
	\arrow["{\mathtt{id}_A}", from=1-3, to=2-3]
\end{tikzcd}\]
since both squares commute and each vertical line is an equivalence.
Thus, the pushout of the two spans are also equivalent, but the pushout
of the lower span is by definition \(A \ast A\). \end{proof}

\section{Hopf fibration}\label{sec-hopf-fib}

We finally present the Hopf construction induced by the H-space
structure on \({\mathbb{S}}^{1}\).

\begin{lemma}[\protect{\cite[Lemma 8.5.8]{hott}}]
 There is an \(H\)-space
structure on \({\mathbb{S}}^{1}\). 
\label{h-space-s1} \end{lemma}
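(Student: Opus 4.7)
The plan is to take $e := \mathtt{base}$ and construct $\mu$ by circle recursion into the function type $\mathbb{S}^1 \to \mathbb{S}^1$. The heart of the construction is a coherent family of self-loops $h:(y:\mathbb{S}^1)\to (y=y)$ which, via function extensionality, promotes the identity function on $\mathbb{S}^1$ to a nontrivial self-path.

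First I would build $h$ by circle induction on $y$. When $y$ is $\mathtt{base}$, set $h(\mathtt{base}):\equiv \mathtt{loop}$. When $y$ varies along $\mathtt{loop}$, the obligation is a dependent path $\mathtt{loop}_*(\mathtt{loop}) = \mathtt{loop}$ in the family $\lambda y.(y=y)$; by the third clause of \Cref{transport-path} this transport equals $\mathtt{loop}^{-1}\ct\mathtt{loop}\ct\mathtt{loop}$, which concatenates to $\mathtt{loop}$ by the coherence laws of \Cref{sec-inf}. Applying $\mathtt{funext}$ to $h$ yields a path $p:\operatorname{id}_{\mathbb{S}^1} = \operatorname{id}_{\mathbb{S}^1}$. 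Now apply the recursion principle of $\mathbb{S}^1$ with $B:\equiv \mathbb{S}^1\to\mathbb{S}^1$, $b:\equiv \operatorname{id}_{\mathbb{S}^1}$ and path $p$ to obtain $\tilde\mu:\mathbb{S}^1\to(\mathbb{S}^1\to\mathbb{S}^1)$, and set $\mu(x,y):\equiv \tilde\mu(x)(y)$.

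For the left unit law, the computation rule gives $\tilde\mu(\mathtt{base})\equiv\operatorname{id}_{\mathbb{S}^1}$, so $\mu(\mathtt{base},a)\equiv a$ holds judgementally, witnessed by $\mathtt{refl}_a$. For the right unit law $\mu(a,\mathtt{base}) = a$, I would use circle induction on $a$ in the family $P(a):\equiv (\mu(a,\mathtt{base})=_{\mathbb{S}^1}a)$. When $a$ is $\mathtt{base}$, $\mu(\mathtt{base},\mathtt{base})\equiv \mathtt{base}$, so take $\mathtt{refl}_{\mathtt{base}}$. When $a$ varies along $\mathtt{loop}$, the family $P$ has the form $f(a)=g(a)$ for $f(a):\equiv \mu(a,\mathtt{base})$ and $g:\equiv \operatorname{id}_{\mathbb{S}^1}$, so by \Cref{identity-path-1} the obligation reduces to
\[\bigl(\mathtt{ap}_f(\mathtt{loop})\bigr)^{-1}\ct\mathtt{refl}_{\mathtt{base}}\ct\mathtt{ap}_g(\mathtt{loop}) = \mathtt{refl}_{\mathtt{base}}.\]
Here $\mathtt{ap}_g(\mathtt{loop}) = \mathtt{loop}$, and $\mathtt{ap}_f(\mathtt{loop})$ equals $\mathtt{happly}(\mathtt{ap}_{\tilde\mu}(\mathtt{loop}))(\mathtt{base})$, which by the recursion computation rule and the funext–happly inverse is exactly $h(\mathtt{base})\equiv\mathtt{loop}$. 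The expression therefore reduces to $\mathtt{loop}^{-1}\ct\mathtt{loop}=\mathtt{refl}_{\mathtt{base}}$ by coherence.

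The main obstacle is the right unit law: keeping track of the computation of $\mathtt{ap}_f(\mathtt{loop})$ through the chain recursion $\to$ funext $\to$ happly, and making sure the self-loop at $\mathtt{base}$ chosen in $h$ is precisely $\mathtt{loop}$ (not $\mathtt{loop}^{\pm 1}\ct\mathtt{loop}$, etc.) so that the cancellation works. The construction of $h$ by circle induction is also delicate because at the loop case one must apply \Cref{transport-path} in the self-identity family, not the more familiar left- or right-endpoint versions.
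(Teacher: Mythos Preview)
Your proposal is correct and follows essentially the same argument as the paper: define $\mu$ by circle recursion with $\mu(\mathtt{base}):\equiv\operatorname{id}_{\mathbb{S}^1}$ and the loop-image given by $\mathtt{funext}$ applied to a family $h(y):y=y$ built by a second circle induction with $h(\mathtt{base}):\equiv\mathtt{loop}$, then verify the right unit law by a third circle induction using the $\mathtt{happly}$/$\mathtt{funext}$ roundtrip. The only cosmetic difference is that the paper invokes \Cref{identity-path-1} (with $f=g=\operatorname{id}$) for the self-loop transport, whereas you cite the third clause of \Cref{transport-path} directly; these are the same statement.
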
 \begin{proof} Take
\(e: \equiv \mathtt{base}:{\mathbb{S}}^{1}\). Now we define
\(\mu:{\mathbb{S}}^{1} \rightarrow {\mathbb{S}}^{1} \rightarrow {\mathbb{S}}^{1}\)
by circle induction on the first argument. We define
\(\mu(\mathtt{base}): \equiv \operatorname{id}\limits_{{\mathbb{S}}^{1}}\);
then we need a path
\(p:\operatorname{id}\limits_{{\mathbb{S}}^{1}} =_{{\mathbb{S}}^{1} \rightarrow {\mathbb{S}}^{1}}^{}\operatorname{id}\limits_{{\mathbb{S}}^{1}}\)
such that \(\mathtt{ap}_{\mu}\left( \mathtt{loop} \right) := p\). By
function extensionality, this is equivalent to a path
\(p_{x}:x =_{{\mathbb{S}}^{1}}^{}x\) for each \(x:{\mathbb{S}}^{1}\). By
induction on \(x:{\mathbb{S}}^{1}\) again, we define
\(p_{\mathtt{base}}: \equiv \mathtt{loop}\). It then remains to give
a dependent path
\(\mathtt{loop} =_{\mathtt{loop}}^{\lambda x.x =_{{\mathbb{S}}^{1}}^{}x}\mathtt{loop}\),
which is equivalent to
\(\mathtt{loop}^{- 1}\ct\mathtt{loop}\ct\mathtt{loop} = \mathtt{loop}\)
by \Cref{identity-path-1}.

Now \(\mu(\mathtt{base})(x) \equiv x\) since
\(\mu(\mathtt{base}) \equiv \operatorname{id}\limits_{{\mathbb{S}}^{1}}\).
We show that \(\mu(x)(\mathtt{base}) =_{{\mathbb{S}}^{1}}^{}x\) by
circle induction on \(x:{\mathbb{S}}^{1}\). When \(x\) is
\(\mathtt{base}\),
\(\mu(\mathtt{base})(\mathtt{base}) \equiv \mathtt{base}\), so we
take \(\mathtt{refl}_{\mathtt{base}}\); when \(x\) `varies along'
	\texttt{loop}, we need a dependent path
\[\mathtt{transport}^{\lambda x.\mu(x)(\mathtt{base}) =_{{\mathbb{S}}^{1}}^{}x}\left( \mathtt{loop} \right)\left( \mathtt{refl}_{\mathtt{base}} \right) = \mathtt{refl}_{\mathtt{base}},\]
which is equivalent to
\(\mathtt{ap}_{ \lambda x.\mu(x)(\mathtt{base}) } \left( \mathtt{loop} \right)^{- 1}\, \ct \, \mathtt{refl}_{\mathtt{base}}\ct\mathtt{loop} = \mathtt{refl}_{\mathtt{base}}\)
by \Cref{identity-path-1}. By function extensionality,
\[\mathtt{ap}_{ \lambda x.\mu(x)(\mathtt{base}) } \left( \mathtt{loop} \right) = \mathtt{happly}(\mathtt{ap}_{\mu}\left( \mathtt{loop} \right))\left( \mathtt{base} \right) = p_{\mathtt{base}} \equiv \mathtt{loop}.\]
Finally, we change \(\mu\) to its uncurried form. \end{proof}

The next result follows from the \(3 \times 3\)-lemma
\protect{\cite[Section 1.8]{brunerie}}. 

\begin{lemma}[\protect{\cite[Proposition 1.8.8]{brunerie}}]
  For every \(m,n:{\mathbb{N}}\),
\({\mathbb{S}}^{m} \ast {\mathbb{S}}^{n} \simeq {\mathbb{S}}^{m + n + 1}\).
\label{join-sphere} \end{lemma}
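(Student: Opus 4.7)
The plan is to induct on $m$ using two auxiliary equivalences: (i) $\mathbb{S}^{0} \ast A \simeq \Sigma A$ for any type $A$, and (ii) the associativity of the join, $(A \ast B) \ast C \simeq A \ast (B \ast C)$. The second of these is precisely where the $3 \times 3$-lemma, cited in the statement, enters the picture.

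Granted (i) and (ii), the argument is essentially mechanical. The base case $m = 0$ is just (i): $\mathbb{S}^{0} \ast \mathbb{S}^{n} \simeq \Sigma \mathbb{S}^{n} \equiv \mathbb{S}^{n+1}$. For the inductive step, assuming the claim holds for $m$ and every $n$, we chain (i) and (ii) to obtain
\[
\mathbb{S}^{m+1} \ast \mathbb{S}^{n} \equiv (\Sigma \mathbb{S}^{m}) \ast \mathbb{S}^{n} \simeq (\mathbb{S}^{0} \ast \mathbb{S}^{m}) \ast \mathbb{S}^{n} \simeq \mathbb{S}^{0} \ast (\mathbb{S}^{m} \ast \mathbb{S}^{n}) \simeq \mathbb{S}^{0} \ast \mathbb{S}^{m+n+1} \simeq \mathbb{S}^{m+n+2},
\]
using (i) to turn suspensions into joins with $\mathbb{S}^{0}$ at both ends, the inductive hypothesis in the fourth step, and (ii) to re-associate in the middle.

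For (i), I would unfold $\mathbb{S}^{0} \ast A$ using $\mathbb{S}^{0} \equiv \mathbb{1} + \mathbb{1}$ together with the distributivity equivalence $\mathbb{S}^{0} \times A \simeq A + A$, presenting it as the pushout of the span $\mathbb{1} + \mathbb{1} \leftarrow A + A \to A$. Applying pushout recursion on both sides then yields mutually quasi-inverse maps between this pushout and $\Sigma A$, which is the pushout of $\mathbb{1} \leftarrow A \to \mathbb{1}$: the two points of $\mathbb{1}+\mathbb{1}$ correspond to $\mathtt{north}$ and $\mathtt{south}$, while the two glues associated to a given $a:A$ concatenate to produce $\mathtt{merid}(a)$.

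The real obstacle is (ii). The idea is to exhibit both $(A \ast B) \ast C$ and $A \ast (B \ast C)$ as iterated pushouts of one and the same $3 \times 3$ diagram of types whose central entry is $A \times B \times C$; associativity of the join then reduces to the row-first/column-first symmetry of pushouts of such a diagram, which is exactly the content of the $3 \times 3$-lemma. Setting up the diagram so that both iterated pushouts genuinely recover the two bracketings, and tracking the higher-dimensional coherences between the glue constructors, is the delicate part; fortunately this bookkeeping is already packaged in the $3 \times 3$-lemma from \cite[Section 1.8]{brunerie}, so the proof reduces to identifying the right spans.
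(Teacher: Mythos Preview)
Your proposal is correct and matches the standard argument from \cite[Proposition~1.8.8]{brunerie}, which is precisely what the paper defers to: the paper gives no proof of its own beyond the remark that the result follows from the $3\times 3$-lemma. Your expansion via the two ingredients $\mathbb{S}^{0}\ast A \simeq \Sigma A$ and join associativity, with the latter supplied by the $3\times 3$-lemma, is exactly the intended route.
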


\begin{theorem}
  There is a fibration, called the \textbf{Hopf fibration},
over \({\mathbb{S}}^{2}\) with fibre \({\mathbb{S}}^{1}\) and total
space \({\mathbb{S}}^{3}\).
\label{hopf-fibration} \end{theorem}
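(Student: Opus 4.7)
The proof should be essentially a direct assembly of the machinery built up in the preceding two sections, so the plan is to verify that $\mathbb{S}^{1}$ satisfies the hypotheses of \Cref{hopf-construction} and then identify the resulting base, fibre, and total space with the appropriate spheres.

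First, I would apply \Cref{hopf-construction} with $A : \equiv \mathbb{S}^{1}$. This requires two inputs: (i) $\mathbb{S}^{1}$ is a $0$-connected type, which is \Cref{s-n-connected} applied with $n = 1$; and (ii) $\mathbb{S}^{1}$ carries an H-space structure, which is exactly \Cref{h-space-s1}. Feeding these into \Cref{hopf-construction} yields a dependent type $P : \Sigma \mathbb{S}^{1} \rightarrow \Type$ whose fibre $P(\mathtt{north})$ is $\mathbb{S}^{1}$ and whose total space satisfies $\sum_{x : \Sigma \mathbb{S}^{1}} P(x) \simeq \mathbb{S}^{1} \ast \mathbb{S}^{1}$.

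Next, I would identify each of the three spaces involved. The base space $\Sigma \mathbb{S}^{1}$ is definitionally equal to $\mathbb{S}^{2}$ by the recursive definition $\mathbb{S}^{n+1} : \equiv \Sigma \mathbb{S}^{n}$ from \Cref{sec-pushout}, and this equality is of pointed types since the basepoint of $\mathbb{S}^{2}$ is $\mathtt{north}$. The fibre is already $\mathbb{S}^{1}$ by construction. Finally, for the total space, I would invoke \Cref{join-sphere} with $m = n = 1$ to obtain $\mathbb{S}^{1} \ast \mathbb{S}^{1} \simeq \mathbb{S}^{1+1+1} = \mathbb{S}^{3}$, and compose this equivalence with the one from \Cref{hopf-construction}.

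Packaging these together via \Cref{fib-fibration} then gives a fibration sequence $\mathbb{S}^{1} \rightarrow \mathbb{S}^{3} \rightarrow \mathbb{S}^{2}$, which is precisely the claim. There is essentially no main obstacle here, since the heavy lifting has already been done in \Cref{h-space-s1}, \Cref{hopf-construction}, and \Cref{join-sphere}; the only minor care needed is to check that the equivalences of total spaces are compatible with the projection map $\pr_{1}$ required by \Cref{fib-fibration}, so that the resulting sequence really is a fibration sequence in the sense of the previous section. This compatibility is transparent because the equivalence in \Cref{hopf-construction} is constructed via the flattening lemma and respects the projection to the base.
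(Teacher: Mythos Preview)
Your proposal is correct and follows essentially the same route as the paper: invoke \Cref{s-n-connected} for $0$-connectedness of $\mathbb{S}^1$, \Cref{h-space-s1} for the H-space structure, feed both into \Cref{hopf-construction}, then identify $\Sigma\mathbb{S}^1 \equiv \mathbb{S}^2$ definitionally and $\mathbb{S}^1 \ast \mathbb{S}^1 \simeq \mathbb{S}^3$ via \Cref{join-sphere}. The paper's proof is slightly terser and does not explicitly mention the packaging through \Cref{fib-fibration} or the projection-compatibility check you flag, but these are harmless elaborations rather than a different argument.
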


\begin{proof} \({\mathbb{S}}^{1}\) is \(0\)-connected by \Cref{s-n-connected}. By
\Cref{h-space-s1} and \Cref{hopf-construction}, there is a fibration over
\(\Sigma{\mathbb{S}}^{1} \equiv {\mathbb{S}}^{2}\) with fibre
\({\mathbb{S}}^{1}\) and total space
\({\mathbb{S}}^{1} \ast {\mathbb{S}}^{1} \simeq {\mathbb{S}}^{3}\) by
\Cref{join-sphere}. \end{proof}

We put all the pieces together by writing the long exact sequence
induced by the Hopf fibration:

\begin{corollary}[\protect{\cite[Corollary 8.5.2]{hott}}]

\(\pi_{2}\left( {\mathbb{S}}^{2} \right) \simeq {\mathbb{Z}}\) and
\(\pi_{k}\left( {\mathbb{S}}^{3} \right) \simeq \pi_{k}\left( {\mathbb{S}}^{2} \right)\)
for \(k \geq 3\). 
\label{pi2-pi3} \end{corollary}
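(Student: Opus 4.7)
The plan is to invoke the long exact sequence (\Cref{fibre-les}) of homotopy groups associated to the Hopf fibration $\mathbb{S}^1 \to \mathbb{S}^3 \to \mathbb{S}^2$ from \Cref{hopf-fibration}, and then read off the two desired isomorphisms from appropriate four-term segments using \Cref{exact-iso}. The key input is that most of the homotopy groups of $\mathbb{S}^1$ and $\mathbb{S}^3$ appearing in the sequence vanish, which isolates the interesting connecting map.

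First, for $\pi_2(\mathbb{S}^2) \simeq \mathbb{Z}$, I would extract the segment
\[\pi_2(\mathbb{S}^3) \to \pi_2(\mathbb{S}^2) \to \pi_1(\mathbb{S}^1) \to \pi_1(\mathbb{S}^3).\]
By \Cref{s-n-connected}, $\mathbb{S}^3$ is $2$-connected, so \Cref{pk-sn-k-less} gives $\pi_1(\mathbb{S}^3) \simeq 0$ and $\pi_2(\mathbb{S}^3) \simeq 0$. Meanwhile, \Cref{pk-s1} tells us $\pi_1(\mathbb{S}^1) \simeq \mathbb{Z}$. The segment thus reads $0 \to \pi_2(\mathbb{S}^2) \to \mathbb{Z} \to 0$, so by \Cref{exact-iso}(3) the middle map is an isomorphism and $\pi_2(\mathbb{S}^2) \simeq \mathbb{Z}$.

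Second, for $\pi_k(\mathbb{S}^3) \simeq \pi_k(\mathbb{S}^2)$ with $k \geq 3$, I would apply the same strategy to the segment
\[\pi_k(\mathbb{S}^1) \to \pi_k(\mathbb{S}^3) \to \pi_k(\mathbb{S}^2) \to \pi_{k-1}(\mathbb{S}^1).\]
Since $k \geq 3$, both $k$ and $k-1$ are $\geq 2$, and by \Cref{pk-s1} we have $\pi_k(\mathbb{S}^1) \simeq 0$ and $\pi_{k-1}(\mathbb{S}^1) \simeq 0$. Applying \Cref{exact-iso}(3) once more yields the desired isomorphism.

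The main subtlety — really the only one — is confirming that \Cref{exact-iso} is applicable, i.e.\ that the relevant segments form exact sequences of abelian groups. By \Cref{fibre-les}, the long exact sequence is an exact sequence of abelian groups except possibly at the $\pi_0$ and $\pi_1$ terms. The second argument lives entirely in degree $\geq 2$, so no care is required. The first argument involves $\pi_1(\mathbb{S}^1)$ and $\pi_1(\mathbb{S}^3)$, but these concrete groups are $\mathbb{Z}$ and $0$, which are abelian, and $\pi_2(\mathbb{S}^2)$ is abelian by the Eckmann--Hilton argument mentioned in \Cref{sec-algebra}; so the segment is indeed exact in abelian groups and the lemma applies.
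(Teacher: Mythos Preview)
Your proof is correct and follows essentially the same approach as the paper: write the long exact sequence of the Hopf fibration, use \Cref{pk-s1} and \Cref{pk-sn-k-less} to zero out the relevant terms, and apply \Cref{exact-iso}. Your extra paragraph checking that the $\pi_1$ terms are abelian so that \Cref{exact-iso} applies is a nice point of care that the paper's proof leaves implicit.
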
 \begin{proof} By
\Cref{fib-fibration}, \Cref{fibre-les} and \Cref{hopf-fibration}, we have an exact
sequence of groups \[\begin{aligned}
\ldots & \rightarrow \pi_{k}\left( {\mathbb{S}}^{1} \right) \rightarrow \pi_{k}\left( {\mathbb{S}}^{3} \right) \rightarrow \pi_{k}\left( {\mathbb{S}}^{2} \right) \rightarrow \\
\ldots & \rightarrow \pi_{2}\left( {\mathbb{S}}^{1} \right) \rightarrow \pi_{2}\left( {\mathbb{S}}^{3} \right) \rightarrow \pi_{2}\left( {\mathbb{S}}^{2} \right) \\
 & \rightarrow \pi_{1}\left( {\mathbb{S}}^{1} \right) \rightarrow \pi_{1}\left( {\mathbb{S}}^{3} \right) \rightarrow \pi_{1}\left( {\mathbb{S}}^{2} \right).
\end{aligned}\] By \Cref{pk-s1} and \Cref{pk-sn-k-less}, this is reduced to \[
  \begin{aligned}
    ... &\to 0 \to \pi_k (\mathbb{S}^3) \to \pi_k (\mathbb{S}^2) \to \\
    ... &\to 0 \to 0 \to \pi_2 (\mathbb{S}^2) \\
        &\to \mathbb{Z} \to 0 \to 0.
  \end{aligned}
\] Applying
\Cref{exact-iso} yields the result. \end{proof}

\chapter{Blakers--Massey theorem}\label{sec-blakers}
 In this chapter, we will
`informalise' the mechanised Agda proof of the Blakers--Massey theorem
in \protect{\cite{blakers}} (or \protect{\cite[Sections 3.4, 4.3]{kbh}}) by combining
\protect{\cite[Section 8.6]{hott}}. For this, a generalised version of pushouts is
first presented in \Cref{sec-gen-po}. Then the theorem is stated, generalised
and relaxed in \Cref{sec-state}. The proof employs a more sophisticated
\emph{encode-decode method}, where we define
	\texttt{code} by the \emph{wedge connectivity lemma} in
\Cref{sec-def-code} and then show the contractibility of
	\texttt{code} in \Cref{sec-contra}. Some consequences,
including the Freudenthal suspension theorem and stability for spheres,
are derived in \Cref{sec-consequence}.

Throughout the chapter, we shall omit proofs on coherence operations too
technical for presentation, and write a square \(\square\) for an
unspecified coherence path.

\section{Generalised pushouts}\label{sec-gen-po}

\begin{lemma}[\protect{\cite[Lemma 4.8.2]{hott}}]
 For any function
\(f:A \rightarrow B\), \(A \simeq \sum_{b:B}\mathtt{fib}_{f}(b)\). 
\end{lemma}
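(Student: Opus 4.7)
The plan is to exhibit a quasi-inverse pair explicitly. Define $\varphi:A \to \sum_{b:B}\mathtt{fib}_f(b)$ by
\[ \varphi(a) :\equiv \bigl(f(a),\,(a,\mathtt{refl}_{f(a)})\bigr), \]
and define $\psi:\sum_{b:B}\mathtt{fib}_f(b) \to A$ by the composition of projections, i.e., $\psi(b,(a,p)) :\equiv a$. The composite $\psi \circ \varphi$ is then judgementally equal to $\mathrm{id}_A$, so this direction requires no work.

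For the other direction, I would need a homotopy $\varphi\circ\psi \sim \mathrm{id}$. Given $(b,(a,p)) : \sum_{b:B}\mathtt{fib}_f(b)$, I would apply path induction on $p : f(a) =_B b$, treating $a$ as the fixed endpoint and $(b,p)$ as the free data — this is exactly the setup required for the standard path induction of \Cref{sec-id-type}. In the induction base case $b \equiv f(a)$, $p \equiv \mathtt{refl}_{f(a)}$, both $\varphi(\psi(f(a),(a,\mathtt{refl}_{f(a)})))$ and $(f(a),(a,\mathtt{refl}_{f(a)}))$ reduce to the same term, so $\mathtt{refl}$ supplies the required path. This produces the needed quasi-inverse, hence the equivalence.

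A more conceptual alternative, which I might present instead, is the following chain of equivalences:
\[ \sum_{b:B}\mathtt{fib}_f(b) \equiv \sum_{b:B}\sum_{x:A}\bigl(f(x)=_B b\bigr) \simeq \sum_{x:A}\sum_{b:B}\bigl(f(x)=_B b\bigr) \simeq \sum_{x:A}\mathbb{1} \simeq A, \]
where the first equivalence is the standard swap of $\Sigma$-types, the second uses \Cref{sigma-id-contractible} to contract each inner $\Sigma$-type $\sum_{b:B}(f(x)=_B b)$ to $\mathbb{1}$ (noting that the centre of contraction is $(f(x),\mathtt{refl}_{f(x)})$), and the third equivalence is immediate.

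No real obstacle is expected: the only subtlety is setting up the path induction so that $a$ is fixed while $b$ varies with $p$, which is precisely how the identity type's induction principle is designed to work. I would likely prefer the explicit quasi-inverse presentation, since the map $\varphi$ and its relation to fibres will be useful later when this lemma is invoked (for instance in results relating total spaces to fibrations, such as in the proof of \Cref{fib-fibration}).
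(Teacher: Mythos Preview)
Your proposal is correct. The paper's proof is the single line ``By \Cref{sigma-id-contractible}'', which is precisely your second, conceptual alternative: expand the fibre, swap the $\Sigma$'s, and contract $\sum_{b:B}(f(x)=b)$. Your explicit quasi-inverse construction is also fine and yields the same equivalence (indeed the map $\varphi$ you write down is the one implicit in the conceptual chain), but the paper opts for the one-line appeal to \Cref{sigma-id-contractible} rather than spelling out the homotopies.
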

\begin{proof}By \Cref{sigma-id-contractible}.\end{proof}

This result shows that we can interchange function types and families of
types, i.e., any function \(f:A \rightarrow B\) is equivalent to the
projection
\(\pr_{1}:\sum_{b:B}\mathtt{fib}_{f}(b) \rightarrow B\).

\redDiamond Recall that \Cref{sec-pushout} defines a pushout for
the span \(X\overset{f}{\leftarrow}C\overset{g}{\rightarrow}Y\). By
rewriting the functions as families of types and expanding the
definition of fibres, this span is equivalent to
\[X\overset{\pr_{X}}{\leftarrow}\sum_{x:X}\sum_{y:Y}\sum_{c:C}\left( \left( g(c) =_{Y}^{}y \right) \times \left( f(c) =_{X}^{}x \right) \right)\overset{\pr_{Y}}{\rightarrow}Y.\]
If we define a dependent type
\(Q:X \rightarrow Y \rightarrow \Type\), then this span can be
generalised as
\[X\overset{\pr_{X}}{\leftarrow}\sum_{x:X}\sum_{y:Y}Q(x)(y)\overset{\pr_{Y}}{\rightarrow}Y.\]

\begin{definition} Let \(X,Y:\Type\) and
\(Q:X \rightarrow Y \rightarrow \Type\). The
\textbf{(generalised) pushout} \(X \sqcup^{Q}Y\) is the HIT generated by
two point constructors \(\mathtt{inl}:X \rightarrow X \sqcup^{Q}Y\)
and \(\mathtt{inr}:Y \rightarrow X \sqcup^{Q}Y\) and a path
constructor
\[\mathtt{glue}:(x:X) \rightarrow (y:Y) \rightarrow Q(x)(y) \rightarrow \left( \mathtt{inl}(x) =_{X \sqcup^{Q}Y}^{}\mathtt{inr}(y) \right),\]
as visualised by \Cref{fig-gen-po}. \end{definition} 

\begin{figure}
    \centering
    \figPoTwo
    \caption{Diagram for the generalised pushout \(X \sqcup^{Q}Y\).}
    \caption*{\small \(X\) and \(Y\) are thought of as two `axes' of a `coordinate system', and each pair \((x,y):X \times Y\) corresponds to a type \(Q(x)(y)\). The two dashed circles represent the `image' of \(X\) and \(Y\) under \(\mathtt{inl}\) and \(\mathtt{inr}\), respectively.}
    \label{fig-gen-po}
\end{figure}

By previous discussions, when
\(Q \equiv \lambda x.\lambda y.\sum_{c:C}\left( \left( g(c) =_{Y}^{}y \right) \times \left( f(c) =_{X}^{}x \right) \right)\),
this definition is equivalent to the one in \Cref{sec-pushout}. Henceforth, we
only use the generalised pushout.

The induction principle states that given

\begin{itemize}
\item
  a dependent type \(P:X \sqcup^{Q}Y \rightarrow \Type\),
\item
  two dependent functions
  \(h_{\mathtt{inl}}:(x:X) \rightarrow P\left( \mathtt{inl}(x) \right)\)
  and
\item
  \(h_{\mathtt{inr}}:(y:Y) \rightarrow P\left( \mathtt{inr}(y) \right)\)
  and
\item
  a family of dependent paths
\end{itemize}
\[h_{\mathtt{glue}}:(x:X) \rightarrow (y:Y) \rightarrow \left( q:Q(x)(y) \right) \rightarrow h_{\mathtt{inl}}(x) =_{\mathtt{glue}_{x,y} (q)}^{P}h_{\mathtt{inr}}(y),\]
there is a dependent function
\(h:\left( z:X \sqcup^{Q}Y \right) \rightarrow P(z)\) such that
\(h\left( \mathtt{inl}(x) \right): \equiv h_{\mathtt{inl}}(x)\),
\(h\left( \mathtt{inr}(y) \right): \equiv h_{\mathtt{inr}}(y)\), and
\(\mathtt{apd}_{h}\left( \mathtt{glue}_{x,y}(q) \right) := h_{\mathtt{glue}}(x)(y)(q)\).

Henceforth, we may omit the first two parameters of \(\mathtt{glue}\)
for conciseness.

\section{Theorem statement}\label{sec-state}

\begin{theorem}[Blakers--Massey theorem]
  Let \(X,Y:\Type\)
  and \(Q:X \rightarrow Y \rightarrow \Type\). Let
  \(m,n \geq - 1\). Suppose the type \(\sum_{y:Y}Q(x)(y)\) is
  \(m\)-connected for each \(x:X\), and the type \(\sum_{x:X}Q(x)(y)\) is
  \(n\)-connected for each \(y:Y\). Then for each \(x:X\) and each
  \(y:Y\), the map
  \[\mathtt{glue}_{x,y}:Q(x)(y) \rightarrow \left( \mathtt{inl}(x) =_{X \sqcup^{Q}Y}^{}\mathtt{inr}(y) \right)\]
  is \((m + n)\)-connected.  \
\label{blakers-thm} \end{theorem}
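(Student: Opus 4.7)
The plan is to adapt the encode-decode method of \Cref{s1-lemma} to this more intricate setting. Unwinding the definitions, it suffices to show that for every $x:X$, $y:Y$ and $p: \mathtt{inl}(x) =_{X\sqcup^{Q}Y} \mathtt{inr}(y)$, the type $\bigl\|\sum_{q:Q(x)(y)}\bigl(\mathtt{glue}_{x,y}(q) = p\bigr)\bigr\|_{m+n}$ is contractible. Following the generalisation strategy used for $\Omega({\mathbb{S}}^{1})\simeq {\mathbb{Z}}$, I would fix $x_{0}:X$ and construct an auxiliary type family $\mathtt{code}: X\sqcup^{Q}Y \to \Type$, valued in $(m+n)$-types, designed so that $\mathtt{code}(\mathtt{inr}(y))$ recovers the $(m+n)$-truncated fibre of $\mathtt{glue}_{x_{0},y}$ over the relevant path.

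The first main step is to define $\mathtt{code}$ by pushout induction on $z:X\sqcup^{Q}Y$. On the point constructors, set $\mathtt{code}(\mathtt{inl}(x))$ and $\mathtt{code}(\mathtt{inr}(y))$ to be explicit $(m+n)$-truncated $\Sigma$-types built from $Q$ (e.g.\ analogues of $\sum_{q_{0}:Q(x_{0})(y)}(\cdots)$). The nontrivial part is the path constructor: for each $x,y$ and $q:Q(x)(y)$, one must exhibit a coherent equivalence $\mathtt{code}(\mathtt{inl}(x)) \simeq \mathtt{code}(\mathtt{inr}(y))$ which depends simultaneously on $x$, $y$ and $q$. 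This is where the \textbf{wedge connectivity lemma} (to be established in Section 4.3) does the heavy lifting: since $\sum_{y:Y}Q(x_{0})(y)$ is $m$-connected and $\sum_{x:X}Q(x)(y_{0})$ is $n$-connected (using $q$ itself as a pointing), wedge connectivity lets us glue together partial constructions --- one where $y$ is held at its basepoint and one where $x$ is --- into a globally coherent one, provided the codomain is an $(m+n)$-type. The truncation level $(m+n)$ appearing in the theorem arises precisely from this constraint.

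With $\mathtt{code}$ in hand, the second step mirrors \Cref{s1-lemma}. Define $\mathtt{encode}(z)(p) := \mathtt{transport}^{\mathtt{code}}(p)(\star)$, where $\star$ is a canonical inhabitant of $\mathtt{code}(\mathtt{inl}(x_{0}))$. Define $\mathtt{decode}:(z:X\sqcup^{Q}Y) \to \mathtt{code}(z) \to \bigl\|\mathtt{inl}(x_{0})=z\bigr\|_{\ldots}$ by pushout induction followed by truncation induction, again appealing to wedge connectivity to handle the $\mathtt{glue}$ case. Show $\mathtt{encode}$ and $\mathtt{decode}$ are mutually inverse by a further round of pushout and path induction; because the target types are $(m+n)$-truncated, most higher coherences are automatic. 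Specialising the resulting equivalence to $z \equiv \mathtt{inr}(y)$ identifies $\mathtt{code}(\mathtt{inr}(y))$ with the relevant truncated fibre of $\mathtt{glue}_{x_{0},y}$, and contractibility of $\mathtt{code}$ (provable since it is generated by a canonical centre) delivers the theorem.

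The main obstacle is the $\mathtt{glue}(q)$ clause of $\mathtt{code}$: one must supply a dependent path of $(m+n)$-types that is compatible with transport in both coordinates and satisfies the coherence hypotheses of the wedge connectivity lemma, so the construction remains well-defined as $(x,y,q)$ vary. A secondary difficulty, already visible in the $\mathbb{S}^{1}$ case, is the combinatorics of coherence manipulations in the encode/decode mutual-inverse verification; I would, like the chapter preamble foreshadows, leave these as unspecified squares $\square$ and refer to the mechanised Agda proof of \protect{\cite{blakers}} for the full bookkeeping.
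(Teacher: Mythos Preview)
Your proposal names the right ingredients --- fixing $x_0$, pushout induction, wedge connectivity for the $\mathtt{glue}$ clause, leaving coherences as $\square$ --- but the overall architecture is confused in a way that would not go through.

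The type signature $\mathtt{code}:X\sqcup^{Q}Y\to\Type$ is too coarse: the object whose contractibility you must establish, $\left\|\mathtt{fib}_{\mathtt{glue}_{x_0,y}}(r)\right\|_{m+n}$, depends on the \emph{path} $r$, not only on its endpoint $\mathtt{inr}(y)$, so it cannot be ``$\mathtt{code}(\mathtt{inr}(y))$'' for any family of that signature. The paper instead takes
\[\mathtt{code}:(p:X\sqcup^{Q}Y)\to(\mathtt{inl}(x_0)=p)\to\Type,\]
sets $\mathtt{code}(\mathtt{inr}(y))(r):\equiv\left\|\mathtt{fib}_{\mathtt{glue}}(r)\right\|_{m+n}$ \emph{by definition}, and then proves every $\mathtt{code}(p)(r)$ contractible. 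There is no $\mathtt{decode}$ and no equivalence with the path type. Your two steps ``show $\mathtt{encode}$ and $\mathtt{decode}$ are mutually inverse'' and ``contractibility of $\mathtt{code}$'' are in fact incompatible: together they would force $\left\|\mathtt{inl}(x_0)=\mathtt{inr}(y)\right\|_{\ldots}$ itself to be contractible, which is far stronger than the theorem and generally false. The ${\mathbb{S}}^{1}$ template computes the path type; here one does not compute it but shows a map \emph{into} it has contractible truncated fibres, and that fibre is what $\mathtt{code}$ must be.

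You are also missing the relaxation step. To define $\mathtt{code}(\mathtt{inl}(x_1))(s)$ one needs a fixed anchor $y_0:Y$, $q_{00}:Q(x_0)(y_0)$; the paper sets
\[\mathtt{code}(\mathtt{inl}(x_1))(s):\equiv\left\|\sum_{q_{10}:Q(x_1)(y_0)}\mathtt{glue}(q_{00})\ct\mathtt{glue}(q_{10})^{-1}=s\right\|_{m+n}.\]
This $(y_0,q_{00})$ is extracted from the $m$-connectedness of $\sum_{y}Q(x_0)(y)$ via truncation induction, using that contractibility is a mere proposition (hence an $m$-type). Without it there is no canonical inhabitant of $\mathtt{code}(\mathtt{inl}(x_0))(\mathtt{refl})$ to serve as centre, and --- more fatally --- the wedge-connectivity application in the $\mathtt{glue}$ clause has no basepoints to wedge against: the pointed types fed to \Cref{wedge-connect-lemma} are $\sum_{x}Q(x)(y_0)$ and $\sum_{y}Q(x_1)(y)$, pointed by $(x_1,q_{10})$ and $(y_0,q_{10})$ respectively, and $q_{00}$ appears explicitly in the target family $C$.
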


Let \(X,Y,Q,m,n\) be fixed. We also fix \(x \equiv x_{0}\). By the
definition of connectedness, the claim is equivalent to
\[(y:Y) \rightarrow \left( r:\left( \mathtt{inl}(x_{0}) =_{X \sqcup^{Q}Y}^{}\mathtt{inr}(y) \right) \right) \rightarrow \mathtt{is\text{-}contr} \left(\left\| \mathtt{fib}_{ \mathtt{glue} } (r) \right\|_{m + n} \right).\]
Similar to \Cref{sec-algebra}, in order to apply path induction on \(r\), we
need to generalise the right-hand side of
\(\left( \mathtt{inl}(x_{0}) = \mathtt{inr}(y) \right)\) to an
arbitrary term \(p\) of \(X \sqcup^{Q}Y\). Therefore, we claim the
following lemma, which would imply \Cref{blakers-thm} by setting \(p\) to
\(\mathtt{inr}(y)\):

\begin{lemma}  There exists a dependent type
\[\mathtt{code}:\left( p:X \sqcup^{Q}Y \right) \rightarrow \left( \mathtt{inl}(x_{0}) =_{X \sqcup^{Q}Y}^{}p \right) \rightarrow \Type\]
such that

\begin{itemize}
\item
  \(\mathtt{code}(\mathtt{inr}(y))(r) \equiv \left\| \mathtt{fib}_{ \mathtt{glue} }  (r) \right\|_{m + n}\),
  and
\item
  for any \(p:X \sqcup^{Q}Y\) and any
  \(r:\left( \mathtt{inl}(x_{0}) =_{X \sqcup^{Q}Y}^{}p \right)\),
  \(\mathtt{code}(p)(r)\) is contractible.
\end{itemize}

\label{blakers-lemma} \end{lemma}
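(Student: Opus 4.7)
The plan is to generalise the encode--decode method of \Cref{sec-algebra} to the pushout $X \sqcup^{Q} Y$, with the \emph{wedge connectivity lemma} taking over here the role that univalence and $\mathtt{succ}$ played for $\mathbb{S}^{1}$. I would construct $\mathtt{code}$ by pushout induction on $p$ and then establish the contractibility clause by a second pushout induction on $p$.

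For the construction, I would set $\mathtt{code}(\mathtt{inr}(y))(r) :\equiv \|\mathtt{fib}_{\mathtt{glue}}(r)\|_{m+n}$ on the $\mathtt{inr}$ branch as dictated by the statement, and on the $\mathtt{inl}$ branch take $\mathtt{code}(\mathtt{inl}(x))(r)$ to be an analogous $(m+n)$-truncated $\Sigma$-type recording decompositions of $r$ through the gluing data relative to the fixed $x_{0}$. On the path constructor $\mathtt{glue}_{x,y}(q)$, \Cref{function-identity} together with univalence reduces the task to producing a family of equivalences between the two codes, indexed by $x, y, q$ and the variable path $r$. Such a family admits a direct definition whenever one of the two coordinates is fixed to a chosen base, and the wedge connectivity lemma then extends these partial definitions to all of $\sum_{x:X}\sum_{y:Y}Q(x)(y)$, using the $m$- and $n$-connectedness of the two fibre projections together with the fact that the target lives at the $(m+n)$-truncated level.

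For the contractibility statement, $\mathtt{is\text{-}contr}(\mathtt{code}(p)(r))$ is a mere proposition and hence an $n$-type for every $n$, so pushout induction on $p$ is available without extra hypotheses. After a further path induction on $r$ on the $\mathtt{inl}$ branch, the goal reduces to exhibiting a canonical centre of contraction (playing the role of $\mathtt{refl}$ in the proof of \Cref{s1-lemma}) and, for each other element of $\mathtt{code}$, a path to this centre; both are again produced by wedge connectivity, which suffices because the codes are $(m+n)$-truncated while the hypotheses supply exactly that amount of connectivity.

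I expect the $\mathtt{glue}$ case of the first pushout induction to be the main obstacle. Beyond simply invoking wedge connectivity for the underlying map, one must match the two partial descriptions of the equivalence on their shared base, and supply enough higher coherence data so that the resulting map is genuinely invertible at the truncated level and compatible with the $\mathtt{apd}$-computation rule for $\mathtt{glue}$. These coherences are precisely what force the mechanised Agda proof of \cite{blakers} to be so intricate, and I anticipate leaning on the suspension-case template of \cite[Section 8.6]{hott} to organise them. A secondary checkpoint is the bookkeeping of truncation indices: the hypotheses give connectedness $m$ and $n$ separately, wedge connectivity combines them additively into $m+n$, and both appearances must be threaded consistently through the definition of $\mathtt{code}$ for the $\mathtt{glue}$ equivalence and the contractibility arguments to typecheck.
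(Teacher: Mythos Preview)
Your sketch of the construction of $\mathtt{code}$ is essentially the paper's: one first performs a relaxation step (which you omit) to fix some $y_0:Y$ and $q_{00}:Q(x_0)(y_0)$ by truncation induction from the $m$-connectedness hypothesis, then sets $\mathtt{code}(\mathtt{inl}(x_1))(s):\equiv \big\|\sum_{q_{10}}\mathtt{glue}(q_{00})\ct\mathtt{glue}(q_{10})^{-1}=s\big\|_{m+n}$, and uses wedge connectivity to build the $\mathtt{glue}$ equivalence.

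The contractibility argument, however, has a genuine gap. After pushout induction on $p$, on the $\mathtt{inl}$ branch you have $r:\mathtt{inl}(x_0)=\mathtt{inl}(x_1)$ with $x_1$ free only in $X$; the endpoint $\mathtt{inl}(x_1)$ is not a free variable of $X\sqcup^{Q}Y$, so path induction on $r$ is unavailable there. More importantly, even if you path-induct on $r$ \emph{before} any pushout induction (with $p$ genuinely free), you land at $\mathtt{code}(\mathtt{inl}(x_0))(\mathtt{refl})$, and the paper explicitly remarks that the contraction step then fails: a general element is $|(q,\ell)|$ with $\ell:\mathtt{glue}(q_{00})\ct\mathtt{glue}(q)^{-1}=\mathtt{refl}$, so you only know $\mathtt{glue}(q_{00})=\mathtt{glue}(q)$, not $q_{00}=q$, and there is no pair of connected variables left for wedge connectivity to act on.

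The paper's fix is to separate centre and contraction. The centre $\mathtt{centre}(p)(r)$ is defined by ordinary path induction as you suggest. For the contraction $(c:\mathtt{code}(p)(r))\to(\mathtt{centre}(p)(r)=c)$, the paper instead invokes the \emph{strengthened} path induction of \Cref{strong-path-induction} to set $p:\equiv\mathtt{inr}(y_0)$ and $r:\equiv\mathtt{glue}(q_{00})$, then \emph{re-generalises} $y_0$ to a fresh $y_1$ so that a further path induction on the witness $\ell_2:\mathtt{glue}(q_{01})=r$ becomes possible. The remaining equality is a computation of $\mathtt{transport}^{\hat{\mathtt{code}}}$ along $\mathtt{glue}(q_{01})$, handled by a dedicated lemma (\Cref{ABC-lemma}) that unwinds the wedge-connectivity data already baked into $\mathtt{code}$; wedge connectivity is not invoked a second time.
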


Additionally, we need the following relaxation:

\begin{lemma}  To prove \Cref{blakers-lemma}, we may assume that there exist some
fixed \(y_{0}:Y\) and
\(q_{00}:Q\left( x_{0} \right)\left( y_{0} \right)\). \end{lemma}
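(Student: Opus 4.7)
The plan is to recognise that the conclusion of \Cref{blakers-lemma}, suitably phrased, is a mere proposition and so can be proved by extracting an honest witness from a merely-inhabited type. Contractibility is always propositional, and the construction of $\mathtt{code}$ is pinned down up to type equivalence by the required judgmental equality at $\mathtt{inr}(y)$; interpreting the outer `there exists' either as a $(-1)$-truncation from the start, or via univalence to upgrade the unique-up-to-equivalence $\mathtt{code}$ into a unique-up-to-path one, makes the full statement propositional.

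Next I would invoke the connectedness hypothesis. Since $m \geq -1$, the $m$-connected type $\sum_{y:Y} Q(x_0)(y)$ is also $(-1)$-connected by \Cref{connected-below}, i.e., its $(-1)$-truncation $\left\|\sum_{y:Y} Q(x_0)(y)\right\|_{-1}$ is contractible. In the language of \Cref{sec-trun}, there merely exists a pair $(y_0, q_{00})$ with $y_0 : Y$ and $q_{00} : Q(x_0)(y_0)$.

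Having established that the goal is a mere proposition and that such a pair merely exists, I would apply the recursion principle of the $(-1)$-truncation, which reduces constructing a term of a mere proposition from a merely-inhabited type to constructing one from an actual inhabitant. Thus the entire proof of \Cref{blakers-lemma} may proceed under a freely chosen $(y_0, q_{00}) : \sum_{y:Y} Q(x_0)(y)$, which is exactly the claimed relaxation.

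The hard part is really just the first paragraph, namely confirming that the top-level existence claim is propositional despite $\Sigma$-types not being propositional in general. The pragmatic workaround is to state \Cref{blakers-lemma} with its outer `exists' as $\|-\|_{-1}$ from the outset; the contractibility conclusion survives this rephrasing because contractibility is itself a mere proposition, so nothing of substance is lost, and the subsequent construction of a specific $\mathtt{code}$ under the assumption of $(y_0, q_{00})$ automatically supplies the required inhabitant.
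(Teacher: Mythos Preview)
Your proposal is correct and follows the same core idea as the paper: the real target is a mere proposition (contractibility), so one may eliminate out of a truncation to obtain an honest witness $(y_{0},q_{00})$.

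Two minor differences are worth noting. First, the paper does not pass through $(-1)$-connectedness: it uses directly that $\left\|\sum_{y:Y}Q(x_{0})(y)\right\|_{m}$ is inhabited and that the goal, being a mere proposition, is \emph{a fortiori} an $m$-type, so $m$-truncation induction applies immediately. Your detour via \Cref{connected-below} to $(-1)$-connectedness is valid but adds a step. Second, your careful worry about the outer $\Sigma$ in \Cref{blakers-lemma} not being propositional is well-founded, but the paper sidesteps it entirely: it applies the relaxation only to the contractibility clause, treating the construction of $\mathtt{code}$ as an auxiliary device built \emph{under} the assumption of $(y_{0},q_{00})$ rather than as part of a propositional goal. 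Your suggested workaround of $(-1)$-truncating the outer `exists' also works, but is unnecessary once one notices that \Cref{blakers-lemma} is only ever invoked through its contractibility consequence.
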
 \begin{proof} By
assumption, \(\sum_{y:Y}Q\left( x_{0} \right)(y)\) is \(m\)-connected,
so \(\left\| {\sum_{y:Y}Q\left( x_{0} \right)(y)} \right\|_{m}\) is
contractible and thus inhabited. The proposition that
\(\mathtt{code}(p)(r)\) is contractible is a mere proposition, and
thus is \emph{a fortiori} an \(m\)-type as \(m \geq - 1\). Thus, by
truncation induction, we may assume the inhabitant of
\(\left\| {\sum_{y:Y}Q\left( x_{0} \right)(y)} \right\|_{m}\) is of the
form \(\left| \left( y_{0},q_{00} \right) \right|_{m}\) for some
\(y_{0}:Y\) and \(q_{00}:Q\left( x_{0} \right)\left( y_{0} \right)\).
\end{proof}

In summary, our goal is to prove \Cref{blakers-lemma} while assuming some
fixed \(y_{0}:Y\) and
\(q_{00}:Q\left( x_{0} \right)\left( y_{0} \right)\). This is broken
into the definition of 	\texttt{code} and its
contractibility in the following two sections.

\section{Definition of
	\texttt{code}}\label{sec-def-code}

We begin with the wedge connectivity lemma. For two pointed types \(A\)
and \(B\), there is a canonical map
\(i:A \vee B \rightarrow A \times B\) given by
\(\lambda a.\left( a,b_{0} \right)\) and
\(\lambda b.\left( a_{0},b \right)\). By \Cref{connected-map-converse} and the
induction principle of wedge sums, the next lemma is an equivalent way
to state that if \(A\) is \(n\)-connected and \(B\) is \(m\)-connected,
then \(i\) is \((m + n)\)-connected.

\begin{lemma}[Wedge connectivity lemma, \protect{\cite[Lemma 8.6.2]{hott}}]
Suppose \(\left( A,a_{0} \right)\) and \(\left( B,b_{0} \right)\) are
\(n\)- and \(m\)-connected pointed types, respectively, with
\(n,m \geq - 1\)\footnote{This lemma works for \(m = - 1\) or
  \(n = - 1\) although \protect{\cite[Lemma 3.1.3]{hott}} requires \(n,m \geq 0\) \protect{\cite{kbh}}.}. Then given

\begin{itemize}
\item
  \(P:A \rightarrow B \rightarrow (n + m)\mathtt{\text{-}Type}\),
\item
  \(f:(a:A) \rightarrow P(a)\left( b_{0} \right)\),
\item
  \(g:(b:B) \rightarrow P\left( a_{0} \right)(b)\) and
\item
  \(p:f\left( a_{0} \right) =_{P\left( a_{0} \right)\left( b_{0} \right)}^{}g\left( b_{0} \right)\),
\end{itemize}
there is \(h:(a:A) \rightarrow (b:B) \rightarrow P(a)(b)\) with
homotopies \(q:h( - )\left( b_{0} \right) \sim f\) and
\(r:h\left( a_{0} \right) \sim g\) such that
\(p = {q\left( a_{0} \right)}^{- 1}\ct r\left( b_{0} \right)\).
\label{wedge-connect-lemma} \end{lemma}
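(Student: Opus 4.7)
The plan is to apply the induction principle for $n$-connected types (Proposition~\ref{induction-connected}) to the base type $A$, with a judiciously chosen family that packages the data of $h$ and $q$ together. Define $D : A \to \mathtt{Type}$ by
\[
D(a) := \sum_{k : (b:B) \to P(a)(b)} \bigl(k(b_0) =_{P(a)(b_0)}^{} f(a)\bigr),
\]
so that $(g,p) : D(a_0)$ serves as a canonical distinguished inhabitant at the basepoint: indeed $g : (b:B) \to P(a_0)(b)$ and $p : g(b_0) = f(a_0)$.

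Supposing we can produce a dependent function $d : (a:A) \to D(a)$ with $d(a_0) \equiv (g,p)$, the lemma follows by setting $h(a) :\equiv \pr_1(d(a))$, $q(a) :\equiv \pr_2(d(a))$, and $r(b) :\equiv \mathtt{refl}_{g(b)}$. The judgemental equality $d(a_0) \equiv (g,p)$ ensures $h(a_0) \equiv g$ and $q(a_0) \equiv p$, so $r$ is well-defined, and the required coherence $p = q(a_0)^{-1} \ct r(b_0)$ reduces to a path identity between $p$ and $p \ct \mathtt{refl}$ that is handled by a coherence $\square$.

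The principal obstacle is that the family $D$ is a priori a family of $(n+m)$-types, not $(n-1)$-types: the function type $(b:B) \to P(a)(b)$ has truncation level $(n+m)$, and so does the fibre of its evaluation. Hence the induction principle for the $n$-connected type $A$ does not apply to $D$ directly. To close this gap, one exploits the $m$-connectedness of $B$ in tandem with the $(n+m)$-truncatedness of $P(a,-)$: view $D(a)$ as the fibre of the evaluation map $\mathtt{ev}_{b_0} : ((b:B) \to P(a)(b)) \to P(a)(b_0)$ over $f(a)$, and argue via Proposition~\ref{connected-map-converse} (applied with the connectivity of $B$) that sections over $B$ into $(n+m)$-type families are rigid enough, when anchored at $b_0$, that the relevant part of $D$ behaves as an $(n-1)$-type family on $A$.

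In practice I expect the cleanest way to implement this is a symmetric double induction, interleaving the induction principle on the $n$-connected $A$ with the induction principle on the $m$-connected $B$: first use $m$-connectedness of $B$ to reduce the data of a section $k$ to its value $f(a)$ at $b_0$ (up to the needed truncation level), then use $n$-connectedness of $A$ to extend $(g,p)$ from $a_0$ to all of $A$. Verifying that the composite construction computes judgementally to $(g,p)$ at $a_0$ -- which is what makes the definition of $r$ and the final coherence go through cleanly -- is the main technical bookkeeping, and is the step I expect to be most delicate.
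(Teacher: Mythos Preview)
The paper does not actually prove this lemma: it is stated with a citation to \cite[Lemma 8.6.2]{hott} and then used as a black box. So there is no paper proof to compare against directly; your outline is essentially the standard argument from the cited source.

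That said, your sketch has a real gap at the decisive step. You correctly identify that the obstacle is showing each $D(a)$ is an $(n-1)$-type, but you then appeal to Proposition~\ref{connected-map-converse}, which is not strong enough on its own. That proposition only tells you that precomposition along an $n$-connected map is an \emph{equivalence} when the target family lands in $n$-types; here the family $P(a)(-)$ lands in $(n+m)$-types, and what you need is that the evaluation map $\mathtt{ev}_{b_0}:\bigl((b:B)\to P(a)(b)\bigr)\to P(a)(b_0)$ is $(n-1)$-\emph{truncated}. This requires the quantitative refinement (Lemma 8.6.1 in \cite{hott}): if $f$ is $\ell$-connected and $P$ is a family of $k$-types, then $(-\circ f)$ is $(k-\ell-2)$-truncated. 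One proves this by induction on $k-\ell$, with Proposition~\ref{connected-map-converse} as the base case. Applied with $f$ the inclusion $b_0:\mathbb{1}\to B$, which is $(m-1)$-connected because $B$ is $m$-connected (via \Cref{truncation-identity}), and $k=n+m$, you get that $\mathtt{ev}_{b_0}$ is $(n-1)$-truncated, as needed. Your phrase ``rigid enough \ldots behaves as an $(n-1)$-type family'' is exactly this statement, but it is a lemma requiring its own inductive proof, not a direct consequence of Proposition~\ref{connected-map-converse}.

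A minor point: with your definition of $D$ the second component must have type $g(b_0)=f(a_0)$, so the distinguished inhabitant is $(g,p^{-1})$ rather than $(g,p)$; the final coherence then becomes $p = (p^{-1})^{-1}\ct\mathtt{refl}$, which is fine.
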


In shorter terms, to define the dependent function
\(h:(a:A) \rightarrow (b:B) \rightarrow P(a)(b)\), it suffices to
consider respectively the cases when \(b\) coincides with \(b_{0}\) by
giving \(f\) and when \(a\) coincides with \(a_{0}\) by giving \(g\),
and then ensure the consistency when both cases happen simultaneously by
giving \(p\).

The following is an application of the wedge connectivity lemma, used in
defining 	\texttt{code}.

\begin{corollary} Let \(X,Y:\Type\) and
\(Q:X \rightarrow Y \rightarrow \Type\). Let \(x_{1}:X\),
\(y_{0}:Y\) and \(q_{10}:Q\left( x_{1} \right)\left( y_{0} \right)\).
Define type
\(A: \equiv \sum_{x_{0}:X}Q\left( x_{0} \right)\left( y_{0} \right)\)
and type
\(B: \equiv \sum_{y_{1}:Y}Q\left( x_{1} \right)\left( y_{1} \right)\).
Assume that \(A\) is \(n\)-connected and \(B\) is \(m\)-connected.
Define \(C:A \rightarrow B \rightarrow (m + n)\mathtt{\text{-}Type}\) by
\[\lambda\left( x_{0},q_{00} \right).\lambda\left( y_{1},q_{11} \right).\left\| {{\mathtt{fib}_{\mathtt{glue}}}_{x_{0},y_{1}}\left( \mathtt{glue}(q_{00})\ct{\mathtt{glue}(q_{10})}^{- 1}\ct\mathtt{glue}(q_{11}) \right)} \right\|_{m + n},\]
where \(q_{00}:Q\left( x_{0} \right)\left( y_{0} \right)\) and
\(q_{11}:Q\left( x_{1} \right)\left( y_{1} \right)\). Then there is a
dependent function
\(h:\left( \left( x_{0},q_{00} \right):A \right) \rightarrow \left( \left( y_{1},q_{11} \right):B \right) \rightarrow C\left( x_{0},q_{00} \right)\left( y_{1},q_{11} \right)\).
\label{sorry-no-name-lemma} \end{corollary}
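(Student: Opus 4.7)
The strategy is to apply the wedge connectivity lemma (\Cref{wedge-connect-lemma}) to the pointed types $(A, (x_1, q_{10}))$ and $(B, (y_0, q_{10}))$, both pointed by the given $q_{10}:Q(x_1)(y_0)$. Their connectedness hypotheses match those required by the lemma and $C$ lands in $(n+m)$-types by construction, so it remains to supply the three ingredients $f$, $g$ and the coherence $p$ demanded by \Cref{wedge-connect-lemma}.

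For the $B$-basepoint side, I construct $f : ((x_0, q_{00}):A) \to C(x_0, q_{00})(y_0, q_{10})$. Unfolding, $C(x_0, q_{00})(y_0, q_{10})$ is $\|\mathtt{fib}_{\mathtt{glue}_{x_0, y_0}}(\mathtt{glue}(q_{00}) \ct \mathtt{glue}(q_{10})^{-1} \ct \mathtt{glue}(q_{10}))\|_{n+m}$, and the concatenated path trivialises to $\mathtt{glue}(q_{00})$ by a coherence square $\alpha_{x_0, q_{00}}$ arising from the inverse law. Hence $(q_{00}, \alpha_{x_0, q_{00}})$ is a legitimate fibre term, and I set $f(x_0, q_{00}) := |(q_{00}, \alpha_{x_0, q_{00}})|_{n+m}$. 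Symmetrically, I define $g(y_1, q_{11}) := |(q_{11}, \beta_{y_1, q_{11}})|_{n+m}$, where $\beta_{y_1, q_{11}}$ is a coherence square trivialising the prefix $\mathtt{glue}(q_{10}) \ct \mathtt{glue}(q_{10})^{-1}$ so that the concatenation reduces to $\mathtt{glue}(q_{11})$.

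For the consistency path $p : f(x_1, q_{10}) = g(y_0, q_{10})$, both sides are truncations of pairs of the form $(q_{10}, -)$ inside the same fibre $\mathtt{fib}_{\mathtt{glue}_{x_1, y_0}}(\mathtt{glue}(q_{10}) \ct \mathtt{glue}(q_{10})^{-1} \ct \mathtt{glue}(q_{10}))$. By \Cref{sigma-path} applied inside $|{-}|_{n+m}$, it suffices to provide a $2$-cell $\square$ between the two coherence paths $\alpha_{x_1, q_{10}}$ and $\beta_{y_0, q_{10}}$, both of which are built from the basic inverse-concatenation laws; such a $\square$ exists by a routine coherence computation. Invoking \Cref{wedge-connect-lemma} with the data $(f, g, p)$ then yields the desired dependent function $h$.

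The main obstacle is purely bookkeeping: the coherence cells $\alpha$, $\beta$ and $\square$ carry no conceptual content, but they must be pinned down explicitly enough that the homotopies $q$ and $r$ produced by \Cref{wedge-connect-lemma} can later be combined with the contractibility argument for $\mathtt{code}$. Conceptually, the corollary is a direct instance of the wedge connectivity lemma specialised to the fibre-type family $C$, packaging the ``diagonal'' data needed to extend the natural basepoint witnesses on each axis to every pair $((x_0, q_{00}), (y_1, q_{11}))$.
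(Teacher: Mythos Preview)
Your proposal is correct and follows essentially the same route as the paper: point \(A\) and \(B\) at \((x_1,q_{10})\) and \((y_0,q_{10})\), apply \Cref{wedge-connect-lemma}, supply \(f\) and \(g\) via the obvious inverse-law coherences, and discharge the compatibility at the common basepoint by a routine \(2\)-cell. Your remark that the coherence data must be tracked carefully for the later contractibility argument is exactly the point the paper makes when it revisits case~(2) after \Cref{ABC-lemma}.
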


\begin{proof} \(A\) is pointed by \(\left( x_{1},q_{10} \right)\) and \(B\)
is pointed by \(\left( y_{0},q_{10} \right)\). By \Cref{wedge-connect-lemma},
to obtain \(h\), it suffices to consider the following cases:

(1) When \(\left( y_{1},q_{11} \right)\) coincides with
\(\left( y_{0},q_{10} \right)\), for each
\(\left( x_{0},q_{00} \right):A\), we need a term of the type
\[C\left( x_{0},q_{00} \right)\left( y_{0},q_{10} \right) \equiv \left\| {\mathtt{fib}_{\mathtt{glue}}\left( \mathtt{glue}(q_{00})\ct{\mathtt{glue}(q_{10})}^{- 1}\ct\mathtt{glue}(q_{10}) \right)} \right\|_{m + n},\]
which is inhabited by
\(\left| \left( q_{00},\square \right) \right|_{m + n}\). (Here,
\(\square\) is the coherence path
\(\mathtt{glue}(q_{00}) = \mathtt{glue}(q_{00})\ct{\mathtt{glue}(q_{10})}^{- 1}\ct\mathtt{glue}(q_{10})\).)
(2) When \(\left( x_{0},q_{00} \right)\) coincides with
\(\left( x_{1},q_{10} \right)\), for each
\(\left( y_{1},q_{11} \right):B\), we need to a term of the type
\[C\left( x_{1},q_{10} \right)\left( y_{1},q_{11} \right) \equiv \left\| {\mathtt{fib}_{\mathtt{glue}}\left( \mathtt{glue}(q_{10})\ct{\mathtt{glue}(q_{10})}^{- 1}\ct\mathtt{glue}(q_{11}) \right)} \right\|_{m + n},\]
which is inhabited by
\(\left| \left( q_{11},\square \right) \right|_{m + n}\).

(3) Choices from (1) and (2) are consistent by coherence operations. \end{proof}

We are now ready to define 	\texttt{code}.

\begin{figure}[h]
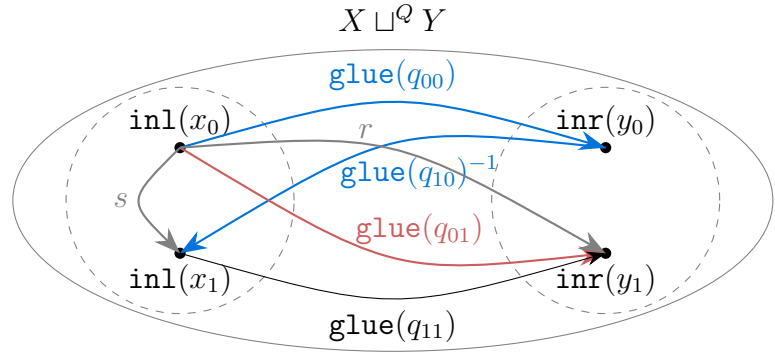

    \centering
    \figCode
    \caption{Diagram for the definition of
	\texttt{code}.}
    \label{fig-code}
\end{figure}

\begin{typee}
\(\mathtt{code}:\left( p:X \sqcup^{Q}Y \right) \rightarrow \left( \mathtt{inl}(x_{0}) =_{X \sqcup^{Q}Y}^{}p \right) \rightarrow \Type.\)
\label{prop-code} \end{typee}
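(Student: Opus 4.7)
The plan is to apply the induction principle of the pushout $X \sqcup^{Q} Y$ on the argument $p$. This requires us to supply three pieces of data: the restriction of $\mathtt{code}$ to each point constructor---call these $\mathtt{code}_{\mathtt{inl}}$ and $\mathtt{code}_{\mathtt{inr}}$---and, for each $x_{1}:X$, $y_{1}:Y$ and $q_{11}:Q(x_{1})(y_{1})$, a dependent path identifying them over $\mathtt{glue}(q_{11})$. The overall target type is $(\mathtt{inl}(x_{0}) =_{X \sqcup^{Q}Y}^{} p) \rightarrow \Type$, which is a valid dependent type in $p$.

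The $\mathtt{inr}$ case is dictated by \Cref{blakers-lemma}: we must set $\mathtt{code}(\mathtt{inr}(y_{1}))(r) :\equiv \left\| \mathtt{fib}_{\mathtt{glue}_{x_{0},y_{1}}}(r) \right\|_{m+n}$, which unfolds to $\left\| \sum_{q_{01}:Q(x_{0})(y_{1})} \left( \mathtt{glue}(q_{01}) =_{X \sqcup^{Q}Y}^{} r \right) \right\|_{m+n}$. For the $\mathtt{inl}$ case, we exploit the fixed data $y_{0}$ and $q_{00}:Q(x_{0})(y_{0})$ to set
\[\mathtt{code}(\mathtt{inl}(x_{1}))(s) :\equiv \left\| \sum_{q_{10}:Q(x_{1})(y_{0})} \left( \mathtt{glue}(q_{00}) \ct \mathtt{glue}(q_{10})^{-1} =_{X \sqcup^{Q}Y}^{} s \right) \right\|_{m+n},\]
encoding the idea that $s$ factors through a detour via $y_{0}$.

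For the glue coherence, combining \Cref{function-identity} (applied with the constant codomain $\Type$), \Cref{transport-path}, univalence and function extensionality reduces the task to producing, for each $s:\mathtt{inl}(x_{0}) =_{X \sqcup^{Q}Y}^{} \mathtt{inl}(x_{1})$, an equivalence
\[\mathtt{code}(\mathtt{inl}(x_{1}))(s) \simeq \mathtt{code}(\mathtt{inr}(y_{1}))(s \ct \mathtt{glue}(q_{11})).\]
The forward map sends a representative $(q_{10},\alpha)$, with $\alpha:\mathtt{glue}(q_{00}) \ct \mathtt{glue}(q_{10})^{-1} = s$, to the value of \Cref{sorry-no-name-lemma} at $(x_{0},q_{00})$ and $(y_{1},q_{11})$---a term of the truncation $\left\| \mathtt{fib}_{\mathtt{glue}_{x_{0},y_{1}}}(\mathtt{glue}(q_{00}) \ct \mathtt{glue}(q_{10})^{-1} \ct \mathtt{glue}(q_{11})) \right\|_{m+n}$---which is then transported along $\alpha$ to land in the target. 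The connectivity hypotheses required by \Cref{sorry-no-name-lemma} are supplied directly by the Blakers--Massey assumptions on $\sum_{x:X}Q(x)(y_{0})$ and $\sum_{y:Y}Q(x_{1})(y)$. The backward map is built by a symmetric application of the wedge connectivity lemma, swapping the roles of the $y_{0}$- and $y_{1}$-axes.

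The main obstacle will be verifying that these two maps form a quasi-inverse pair. Each round-trip is valued in an $(m+n)$-truncated type, so the required identifications with the identity live in $(m+n-1)$-types over a domain of the form $A \times B$ to which \Cref{wedge-connect-lemma} again applies. The check thereby reduces to evaluating the composites on basepoint inputs ($q_{10}$ at the basepoint, then $q_{01}$ at the basepoint, with a consistency condition), where the constructions collapse to explicit coherence equalities on chains of $\mathtt{glue}$-paths; these we abbreviate as $\square$ following the convention of \Cref{sec-blakers}.
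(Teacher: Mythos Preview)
Your proposal is correct and follows essentially the same route as the paper: the two values of \(\mathtt{code}\) on point constructors match verbatim, and the glue coherence is reduced via \Cref{function-identity}, \Cref{transport-path} and univalence to an equivalence whose forward direction is supplied by \Cref{sorry-no-name-lemma}. The only cosmetic difference is that you index the required equivalence by \(s:\mathtt{inl}(x_{0})=\mathtt{inl}(x_{1})\) whereas the paper indexes by \(r:\mathtt{inl}(x_{0})=\mathtt{inr}(y_{1})\); these are interchangeable via \(r = s\ct\mathtt{glue}(q_{11})\), and your sketch of the backward map and the quasi-inverse check (another pass of wedge connectivity into an \((m+n-1)\)-type) spells out exactly what the paper leaves as ``details omitted''.
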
 \begin{proof}See \Cref{fig-code}. When
\(p\) is \(\mathtt{inr}(y_{1})\), for
\(r:\left( \mathtt{inl}(x_{0}) =_{X \sqcup^{Q}Y}^{}\mathtt{inr}(y_{1}) \right)\),
as mentioned earlier,
\[\mathtt{code}(\mathtt{inr}(y_{1}))(r): \equiv \left\| {\sum_{q_{01}:Q\left( x_{0} \right)\left( y_{1} \right)}\mathtt{glue}(q_{01}) = r} \right\|_{m + n}.\]

When \(p\) is \(\mathtt{inl}(x_{1})\), for
\(s:\left( \mathtt{inl}(x_{0}) =_{X \sqcup^{Q}Y}^{}\mathtt{inl}(x_{1}) \right)\),
\[\mathtt{code}(\mathtt{inl}(x_{1}))(s): \equiv \left\| {\sum_{q_{10}:Q\left( x_{1} \right)\left( y_{0} \right)}\mathtt{glue}(q_{00})\ct{\mathtt{glue}(q_{10})}^{- 1} = s} \right\|_{m + n}.\]

\redDiamond When \(p\) `varies along'
\(\mathtt{glue}(q_{11}):\mathtt{inl}(x_{1}) = \mathtt{inr}(y_{1})\),
we need to find a path
\[\mathtt{apd}_{\mathtt{code}}\left( \mathtt{glue}(q_{11}) \right):\mathtt{code}(\mathtt{inl}(x_{1})) =_{\mathtt{glue}(q_{11})}^{\mathtt{code}}\mathtt{code}(\mathtt{inr}(y_{1}))\]
By function extensionality, it suffices to find for every
\(r:\left( \mathtt{inl}(x_{0}) = \mathtt{inr}(y_{1}) \right)\), a
path
\[{\mathtt{glue}(q_{11})}_{\ast}\left( \mathtt{code}(\mathtt{inl}(x_{1})) \right)(r) = \mathtt{code}(\mathtt{inr}(y_{1}))(r).\]
Starting from the left-hand side, we have a path \[\begin{aligned}
{\mathtt{glue}(q_{11})}_{\ast}\left( \mathtt{code}(\mathtt{inl}(x_{1})) \right)(r) & = {\mathtt{glue}(q_{11})}_{\ast}\left( \mathtt{code}(\mathtt{inl}(x_{1}))({\mathtt{glue}(q_{11})}_{\ast}^{- 1}(r)) \right) \\
 & = \mathtt{code}(\mathtt{inl}(x_{1}))(r\ct{\mathtt{glue}(q_{11})}^{- 1}),
\end{aligned}\] where the first equality is by \Cref{function-identity} and
the second by \Cref{transport-constant} and \Cref{transport-path}. Therefore, by
univalence, it remains to find a function
\begin{equation}
  \Phi:\mathtt{code}(\mathtt{inl}(x_{1}))(r\ct{\mathtt{glue}(q_{11})}^{- 1}) \rightarrow \mathtt{code}(\mathtt{inr}(y_{1}))(r),
  \label{func-phi}
\end{equation}
and show that \(\Phi\) is an equivalence. By the definition of
\(\mathtt{code}(\mathtt{inl}(x_{1}))\) and
\(\mathtt{code}(\mathtt{inr}(y_{1}))\) and truncation induction, it
suffices to find for each
\(q_{01}:Q\left( x_{1} \right)\left( y_{0} \right)\) with
\(\mathtt{glue}(q_{00})\ct{\mathtt{glue}(q_{10})}^{- 1} = r \,\ct \,{\mathtt{glue}(q_{11})}^{- 1}\),
a term of
\(\left\| {\sum_{q_{01}:Q\left( x_{0} \right)\left( y_{1} \right)}\mathtt{glue}(q_{01}) = r} \right\|_{m + n}\).
By coherence operations, we then eliminate \(r\), and it suffices to
find for each \(q_{10}:Q\left( x_{1} \right)\left( y_{0} \right)\), a
term of
\[\left\| {\sum_{q_{01}:Q\left( x_{0} \right)\left( y_{1} \right)}\mathtt{glue}(q_{01}) = \mathtt{glue}(q_{00})\ct{\mathtt{glue}(q_{10})}^{- 1}\ct\mathtt{glue}(q_{11})} \right\|_{m + n}.\]

We now invoke \Cref{sorry-no-name-lemma} (by using which the order of the
variables is effectively rearranged), and the desired term is given by
\(h\left( x_{0},q_{00} \right)\left( y_{1},q_{11} \right)\). We can
similarly define the function on the other direction and show that the
two functions are quasi-inverses, but the details are omitted. \end{proof}

\section{Contractibility of
	\texttt{code}}\label{sec-contra}

We now show that for any \(p:X \sqcup^{Q}Y\) and any
\(r:\left( \mathtt{inl}(x_{0}) =_{X \sqcup^{Q}Y}^{}p \right)\),
\(\mathtt{code}(p)(r)\) is contractible, by first giving its centre of
contraction.

\begin{typee}
\(\mathtt{centre}:\left( p:X \sqcup^{Q}Y \right) \rightarrow \left( r:\left( \mathtt{inl}(x_{0}) =_{X \sqcup^{Q}Y}^{}p \right) \right) \rightarrow \mathtt{code}(p)(r).\)
\end{typee}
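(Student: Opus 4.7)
The plan is to apply based path induction on $r$. Since the codomain $\mathtt{code}(p)(r)$ is parameterised by a path $r : \mathtt{inl}(x_0) =_{X \sqcup^Q Y} p$ with $p$ free, \Cref{strong-path-induction} reduces the construction of $\mathtt{centre}$ to producing a single term of $\mathtt{code}(\mathtt{inl}(x_0))(\mathtt{refl}_{\mathtt{inl}(x_0)})$; the general value $\mathtt{centre}(p)(r)$ is then recovered by the transport formula of \Cref{transport-path-induction}.

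By the $\mathtt{inl}$ clause in \Cref{prop-code}, taking $x_1 := x_0$ and $s := \mathtt{refl}_{\mathtt{inl}(x_0)}$ unfolds this to
\[ \left\| \sum_{q_{10} : Q(x_0)(y_0)} \mathtt{glue}(q_{00}) \ct \mathtt{glue}(q_{10})^{-1} = \mathtt{refl}_{\mathtt{inl}(x_0)} \right\|_{m+n}. \]
I would inhabit this with the canonical choice $\left| (q_{00}, \square) \right|_{m+n}$, where $\square$ is the coherence path $\mathtt{glue}(q_{00}) \ct \mathtt{glue}(q_{00})^{-1} = \mathtt{refl}_{\mathtt{inl}(x_0)}$ furnished by the right-inverse law for path concatenation.

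There is no real obstacle at this step: the relaxation performed at the end of \Cref{sec-state}, which provided the fixed pair $(y_0, q_{00})$, is exactly what permits us to name a canonical witness, and the required coherence square is just the right-inverse law for $\ct$. The genuine difficulty is deferred to the subsequent contractibility half of the argument, where every element of $\mathtt{code}(p)(r)$ must be shown to equal $\mathtt{centre}(p)(r)$; I expect that to require pushout induction on $p$ together with truncation induction and an application of the wedge connectivity lemma (\Cref{wedge-connect-lemma}) analogous to the one used to define $\mathtt{code}$ itself, so that the $\mathtt{inl}$, $\mathtt{inr}$, and $\mathtt{glue}$ cases are handled consistently via the equivalence $\Phi$ of \Cref{prop-code}.
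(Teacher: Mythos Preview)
Your construction of \(\mathtt{centre}\) is correct and matches the paper's proof essentially line for line: ordinary based path induction reduces to the case \((p,r)\equiv(\mathtt{inl}(x_0),\mathtt{refl})\), the \(\mathtt{inl}\)-clause of \(\mathtt{code}\) unfolds as you wrote, and \(\lvert(q_{00},\square)\rvert_{m+n}\) with the right-inverse coherence is exactly the inhabitant the paper gives, with the explicit transport description coming from \Cref{transport-path-induction}. (Invoking \Cref{strong-path-induction} is harmless but unnecessary here, since you are using the standard base case.)

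One remark on your closing speculation: the paper does \emph{not} prove contractibility by pushout induction on \(p\), nor does it invoke \Cref{wedge-connect-lemma} again. Instead it uses \Cref{strong-path-induction} to move the base of the path induction to \(p\equiv\mathtt{inr}(y_0)\), \(r\equiv\mathtt{glue}(q_{00})\), then re-generalises over \(y_1\), applies truncation induction and a further path induction, and finally reads off the value of \(\mathtt{centre}\) via \Cref{ABC-lemma} and the already-established behaviour of \(\Phi\). The wedge-connectivity work was front-loaded into the definition of \(\mathtt{code}\); the contractibility step only needs to \emph{unwind} it in one of the special cases of \Cref{sorry-no-name-lemma}.
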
 \begin{proof}By path induction, it suffices to assume
\(p \equiv \mathtt{inl}(x_{0})\) and
\(r \equiv \mathtt{refl}_{\mathtt{inl}(x_{0})}\). Then
\[\mathtt{code}(\mathtt{inl}(x_{0}))(\mathtt{refl}_{\mathtt{inl}(x_{0})}) \equiv \left\| {\sum_{q:Q\left( x_{0} \right)\left( y_{0} \right)}\mathtt{glue}(q_{00})\ct{\mathtt{glue}(q)}^{- 1} = \mathtt{refl}_{\mathtt{inl}(x_{0})}} \right\|_{m + n},\]
which is inhabited by
\(\left| \left( q_{00},\square \right) \right|_{m + n}\). Then
	\texttt{centre} is explicitly given by
\[\mathtt{centre}(p)(r): \equiv \mathtt{transport}^{\hat{\mathtt{code}}}\left( \mathtt{pair}\hspace{0pt}^{=}(r,t) \right)\left( \left| \left( q_{00},\square \right) \right|_{m + n} \right)\]
by \Cref{transport-path-induction}, where
\(t:r_{\ast}\left( \mathtt{refl}_{x_{0}} \right) = r\) by
\Cref{transport-path} and
\[\hat{\mathtt{code}}:\sum_{p:X \sqcup^{Q}Y}\left( \mathtt{inl}(x_{0}) =_{X \sqcup^{Q}Y}^{}p \right) \rightarrow \Type\]
is the uncurried form of 	\texttt{code}. \end{proof}

Before we proceed, the next lemma allows us to extract some valuable
information from the hard-earned definition of
	\texttt{code} for calculating a certain
	\texttt{transport} function.

\begin{lemma}[\protect{\cite[Lemma 8.6.10]{hott}}]
 Let \(A:\Type\),
\(B:A \rightarrow \Type\),
\(C:(a:A) \rightarrow B(a) \rightarrow \Type\),
\(a_{1},a_{2}:A\), \(m:a_{1} =_{A}^{}a_{2}\), and
\(b_{2}:B\left( a_{2} \right)\). Denote
\(b_{1}: \equiv m_{\ast}^{- 1}\left( b_{2} \right):B\left( a_{1} \right)\).
Also denote the uncurried form of \(C\) as
\(\hat{C}:\sum_{a:A}B(a) \rightarrow \Type\). Then the function
\[\mathtt{transport}^{\hat{C} } \left( \mathtt{pair}\hspace{0pt}^{=}(m,t) \right):C\left( a_{1} \right)\left( b_{1} \right) \rightarrow C\left( a_{2} \right)\left( b_{2} \right),\]
where
\(t:m_{\ast}\left( b_{1} \right) =_{B\left( a_{2} \right)}^{}b_{2}\) is
an obvious path, is equal to the equivalence obtained by applying
\(\mathtt{idtoeqv}\) to the composite of paths
\[C\left( a_{1} \right)\left( b_{1} \right) = m_{\ast}\left( C\left( a_{1} \right)\left( m_{\ast}^{- 1}\left( b_{2} \right) \right) \right) = m_{\ast}\left( C\left( a_{1} \right) \right)\left( b_{2} \right) = C\left( a_{2} \right)\left( b_{2} \right),\]
where the first equality is by definition and by \Cref{transport-constant},
the second is by \Cref{function-identity}, and the third is by
\(\mathtt{happly}(\mathtt{apd}_{C}(m))\left( b_{2} \right)\). 
\label{ABC-lemma}
\end{lemma}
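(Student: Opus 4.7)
The plan is to proceed by path induction on $m : a_1 =_A a_2$, which collapses the problem to the case $m \equiv \mathtt{refl}_{a_1}$. First I would observe that the statement is genuinely a family in $m$: both $b_1 := m_*^{-1}(b_2)$ and the auxiliary path $t : m_*(b_1) =_{B(a_2)} b_2$ depend on $m$, with $t$ arising canonically from the quasi-inverse relation of \Cref{transport-isequiv} (if one prefers to treat $t$ as a free variable, one can instead apply the strengthened path induction of \Cref{strong-path-induction} to the pair $(m, t)$). Either way, the induction reduces everything to assuming $m \equiv \mathtt{refl}_{a_1}$.

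In this base case, transport along $\mathtt{refl}_{a_1}$ in $B$ is the identity on $B(a_1)$ by the computation rule of \Cref{transport}, so $b_1 \equiv b_2$, $m_*(b_1) \equiv b_2$, and the obvious path $t$ becomes $\mathtt{refl}_{b_2}$. Consequently $\mathtt{pair}^{=}(\mathtt{refl}_{a_1}, \mathtt{refl}_{b_2}) \equiv \mathtt{refl}_{(a_1, b_2)}$, and the left-hand side $\mathtt{transport}^{\hat{C}}(\mathtt{refl}_{(a_1, b_2)})$ is definitionally $\operatorname{id}_{C(a_1)(b_2)}$.

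For the right-hand side, each of the three composed paths degenerates to reflexivity: the first by the computation rule for transport along $\mathtt{refl}$ in a constant family (so $m_*(C(a_1)(m_*^{-1}(b_2))) \equiv C(a_1)(b_2)$); the second because \Cref{function-identity} specialised to $\mathtt{refl}$ gives $\mathtt{transport}^B(\mathtt{refl}) \circ f \circ \mathtt{transport}^A(\mathtt{refl}^{-1}) \equiv f$; and the third because $\mathtt{apd}_C(\mathtt{refl}_{a_1}) \equiv \mathtt{refl}_{C(a_1)}$ by \Cref{apd-path}, so that $\mathtt{happly}$ of it at $b_2$ is $\mathtt{refl}_{C(a_1)(b_2)}$. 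The composite of these three paths is therefore $\mathtt{refl}_{C(a_1)(b_2)}$, and $\mathtt{idtoeqv}(\mathtt{refl}) \equiv \operatorname{id}$ by its definition via \Cref{transport}. Both sides thus agree as the identity function on $C(a_1)(b_2)$, and one supplies $\mathtt{refl}$ as the witness.

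The main obstacle I expect is not the path induction itself but the bookkeeping that the three visually distinct path constructions all reduce to reflexivity simultaneously in the base case: one must trace through the explicit definitions of \Cref{transport-constant}, \Cref{function-identity}, and $\mathtt{apd}_C$ to check that the implicit coherence squares linking them vanish, which is precisely the kind of coherence argument the author has elsewhere flagged as routine and suppressed.
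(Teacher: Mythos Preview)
Your proposal is correct and matches the paper's (implicit) approach: the paper does not supply an explicit proof for this lemma at all, citing \cite[Lemma 8.6.10]{hott} and relying on its earlier convention that proofs consisting only of path induction are omitted. Your argument---induct on $m$, observe that in the base case $m\equiv\mathtt{refl}_{a_1}$ every constituent (the path $t$, the three displayed equalities, and $\mathtt{pair}^{=}(\mathtt{refl},\mathtt{refl})$) computes to reflexivity so that both sides become the identity on $C(a_1)(b_2)$---is exactly the routine the paper suppresses, and your closing remark about coherence bookkeeping is apt.
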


\begin{figure}[H]
    \centering
    \figAbc
    \caption{Diagram for \Cref{ABC-lemma}.}
    \caption*{\small The blue
\(m_{\ast}\) represents \(\mathtt{transport}^{B}(m)\), the green
\(m_{\ast}\) represents
\(\mathtt{transport}^{\lambda a.\Type}(m)\) and the red
\(m_{\ast}\) represents \(\mathtt{transport}^{C}(m)\). The arrow
between \(m_{\ast}\left( C\left( a_{1} \right) \right)\) and
\(C\left( a_{2} \right)\) represents the dependent path
\(\mathtt{apd}_{C}(m)\).}
    \label{fig:enter-label}
\end{figure}

\redDiamond We apply
\Cref{ABC-lemma} with \(A \equiv X \sqcup^{Q}Y\),
\(B \equiv (p:A) \rightarrow \left( \mathtt{inl}(x_{0}) = p \right)\),
\(C \equiv \mathtt{code}\), \(a_{1} \equiv \mathtt{inl}(x_{1})\),
\(a_{2} \equiv \mathtt{inr}(y_{1})\),
\(m \equiv \mathtt{glue}(q_{11})\), and
\(b_{2} \equiv \mathtt{glue}(q_{01})\). Then
\(b_{1} \equiv m_{\ast}^{- 1}\left( b_{2} \right) = \mathtt{glue}(q_{01})\ct{\mathtt{glue}(q_{11})}^{- 1}\)
by \Cref{transport-path}. Then \Cref{ABC-lemma} says that to calculate the function
\[T: \equiv \mathtt{transport}^{ \hat{\mathtt{code}} }\left( \mathtt{pair}\hspace{0pt}^{=}(\mathtt{glue}(q_{11}),t) \right):\mathtt{code}(\mathtt{inl}(x_{1}))(b_{1}) \rightarrow \mathtt{code}(\mathtt{inr}(y_{1}))(b_{2}),\]
the major work lies in recovering our definition of
\(\mathtt{apd}_{\mathtt{code}}\left( \mathtt{glue}(q_{11}) \right)\),
which is a part of the definition of \(\mathtt{code}\). Noticing that
the type of \(T\) matches the type of \(\Phi\) (Function~\ref{func-phi}), to calculate
\(T\), we can utilise our definition of \(\Phi\), with \(r\) replaced
with \(b_{2} \equiv \mathtt{glue}(q_{01})\). The definition of
\(\Phi\) relies on \Cref{sorry-no-name-lemma} and we know explicitly the
output under the two special cases in the proof of \Cref{sorry-no-name-lemma}.
In particular, suppose that \(\left( x_{0},q_{00} \right)\) coincides
with \(\left( x_{1},q_{10} \right)\) and further \(q_{01}\) coincides
with \(q_{11}\). By case (2) in the proof of \Cref{sorry-no-name-lemma} and
the full statement of \Cref{wedge-connect-lemma}, \(h\) gives the term
\(\left| \left( q_{01},\square \right) \right|_{m + n}\) (up to a path),
and thus so does \(\Phi\) and \(T\).

Finally, we show that each \(\mathtt{code}(p)(r)\) is contractible,
written as: \begin{typee}
\(\left( p:X \sqcup^{Q}Y \right) \rightarrow \left( r:\left( \mathtt{inl}(x_{0}) =_{X \sqcup^{Q}Y}^{}p \right) \right) \rightarrow \left( c:\mathtt{code}(p)(r) \right) \rightarrow \left( \mathtt{centre}(p)(r) = c \right).\)
\label{code-contr} \end{typee}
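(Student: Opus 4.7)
The plan is to proceed by pushout induction on $p : X \sqcup^{Q} Y$. The key structural observation is that both $\mathtt{centre}(p)(r)$ and $c$ inhabit the $(m+n)$-type $\mathtt{code}(p)(r)$, so the goal $\mathtt{centre}(p)(r) = c$ is a path in an $(m+n)$-type, hence an $(m+n-1)$-type, and by cumulativity also an $(m+n)$-type. This double justification both licences truncation induction on the $(m+n)$-truncated $c$ inside each point case, and reduces the $\mathtt{glue}$ coherence case of the pushout induction to an obligation in a low-enough dimension that no further data needs to be supplied by hand.

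In the case $p \equiv \mathtt{inr}(y_{1})$, I would first apply truncation induction on $c$ to take $c \equiv \left|(q_{01}, s)\right|_{m+n}$ with $q_{01} : Q(x_{0})(y_{1})$ and $s : \mathtt{glue}(q_{01}) = r$, then perform path induction on $s$ to reduce the obligation to
\[\mathtt{centre}(\mathtt{inr}(y_{1}))(\mathtt{glue}(q_{01})) = \left|(q_{01}, \mathtt{refl})\right|_{m+n}.\]
Unfolding $\mathtt{centre}$ via \Cref{transport-path-induction} presents the left-hand side as the function $T$ applied to $\left|(q_{00}, \square)\right|_{m+n}$, where $T$ is precisely the transport analysed via \Cref{ABC-lemma} in the previous section. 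Chasing its three-step decomposition through $\mathtt{apd}_{\mathtt{code}}(\mathtt{glue}(q_{01}))$ brings us to the equivalence $\Phi$; in the degenerate configuration where $(x_{0}, q_{00})$ plays the role of the wedge basepoint $(x_{1}, q_{10})$ in \Cref{sorry-no-name-lemma}, Case (2) of that proof pins down the output of $\Phi$ as $\left|(q_{01}, \square')\right|_{m+n}$ with a coherence $\square'$ that collapses to $\mathtt{refl}$ by the naturality clauses of \Cref{wedge-connect-lemma}. The case $p \equiv \mathtt{inl}(x_{1})$ is entirely symmetric, invoking Case (1) of \Cref{sorry-no-name-lemma} instead of Case (2), with $r$ reduced by path induction to $\mathtt{glue}(q_{00}) \ct \mathtt{glue}(q_{10})^{-1}$.

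The remaining pushout-induction obligation is a dependent path across $\mathtt{glue}(q_{11})$ linking the two point cases; since the motive is a family of $(m+n-1)$-types, this is a pure coherence obligation, of the same flavour as (and ultimately discharged using) the consistency datum $p$ from \Cref{wedge-connect-lemma}, together with standard coherence manipulation of glue paths and transports. The main obstacle throughout is not any single conceptual move but the sheer density of transports, dependent paths, and higher-dimensional coherence cells: ensuring that each anonymous $\square$ used in the definition of $\mathtt{centre}$ matches the canonical coherence produced by $\Phi$ under the reductions above is exactly where the proof earns its reputation for being painful to present on paper, and is why \protect{\cite{blakers}} resorts to a mechanised Agda verification.
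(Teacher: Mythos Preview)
Your $\mathtt{inr}$ case is essentially the paper's entire argument, but the overall strategy differs: the paper does \emph{not} perform pushout induction on $p$. Instead it invokes the strengthened path induction of \Cref{strong-path-induction}, which exploits the contractibility of $\sum_{p}(\mathtt{inl}(x_0) = p)$ to say that proving the statement for \emph{any single} pair $(p,r)$ suffices. The paper chooses $(p,r) = (\mathtt{inr}(y_0), \mathtt{glue}(q_{00}))$ and then, to make $r$ free for the later path induction on $\ell_2$, re-generalises to all $(y_1, r)$ with $p = \mathtt{inr}(y_1)$ --- which is exactly your $\mathtt{inr}$ case. Thus the paper handles \emph{only} the $\mathtt{inr}$ branch; your $\mathtt{inl}$ and $\mathtt{glue}$ cases are work the paper entirely avoids. (The point of \Cref{strong-path-induction} is exactly this freedom to choose a convenient basepoint: ordinary path induction would land at $(\mathtt{inl}(x_0), \mathtt{refl})$, which the Remark preceding the proof explains does \emph{not} work.)

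Your $\mathtt{inl}$ case, while plausibly doable, is not ``entirely symmetric'': the transport defining $\mathtt{centre}(\mathtt{inl}(x_1))(\mathtt{glue}(q_{00}) \ct \mathtt{glue}(q_{10})^{-1})$ runs along a \emph{concatenation} of glue paths, so it factors through $\mathtt{inr}(y_0)$ and requires two applications of the $\Phi$/\Cref{ABC-lemma} analysis (one forward, one backward), not one. More worrying is the glue case: the dependent path over $\mathtt{glue}(q_{11})$ lives in an $(m+n-2)$-type, which is \emph{not} contractible for general $m,n$, so ``no further data needs to be supplied by hand'' is an overclaim. You later point to the consistency datum from \Cref{wedge-connect-lemma}, which is the right ingredient, but discharging that coherence is real work --- precisely the kind the paper's use of strengthened path induction sidesteps.
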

\begin{remark}\redDiamond It is tempting to assume by path
induction that \(p\) is \(\mathtt{inl}(x_{0})\) and \(r\) is
\(\mathtt{refl}_{\mathtt{inl}(x_{0})}\). The desired path is in
\(\mathtt{code}(p)(r)\) and thus is an \((m + n - 1)\)-type (and
\emph{a fortiori} an \((m + n)\)-type), so by truncation induction, we
assume that \(c\) is
\(\left| \left( q,\ell_{1} \right) \right|_{m + n}\), where
\(q:Q\left( x_{0} \right)\left( y_{0} \right)\) and
\(\ell_{1}:\mathtt{glue}(q_{00})\ct{\mathtt{glue}(q)}^{- 1} = \mathtt{refl}_{\mathtt{inl}(x_{0})}\).
By the definition of \(\mathtt{centre}\), we need a path
\(\left| \left( q_{00},\square \right) \right|_{m + n} = \left| \left( q,\ell_{1} \right) \right|_{m + n}.\)
Now we can only attempt to prove \(q_{00} = q\), which is unattainable,
as \(\ell_{1}\) only indicates
\(\mathtt{glue}(q_{00}) = \mathtt{glue}(q)\). \end{remark}

\begin{proof} By \Cref{strong-path-induction}, it suffices to assume that \(p\)
is \(\mathtt{inr}(y_{0})\) and \(r\) is \(\mathtt{glue}(q_{00})\).
We need a re-generalisation for the \(y\)`s and claim the following:
\[\left( y_{1}:Y \right) \rightarrow \left( r:\mathtt{inl}(x_{0}) = \mathtt{inr}(y_{1}) \right) \rightarrow \left( c:\mathtt{code}(\mathtt{inr}(y_{1}))(r) \right) \rightarrow \left( \mathtt{centre}(\mathtt{inr}(y_{1}))(r) = c \right).\]
By truncation induction, it suffices to assume \(c\) is
\(\left| \left( q_{01},\ell_{2} \right) \right|_{m + n}\) for some
\(q_{01}:Q\left( x_{0} \right)\left( y_{1} \right)\) and
\(\ell_{2}:\mathtt{glue}(q_{01}) = r\). Noticing that since \(r\) is a
free point in the type
\(\mathtt{inl}(x_{0}) = \mathtt{inr}(y_{1})\), we use path induction
on \(\ell_{2}\) to assume that \(r\) is \(\mathtt{glue}(q_{01})\) and
\(\ell_{2}\) is \(\mathtt{refl}_{\mathtt{glue}(q_{01})}\). It then
remains to show that, for any \(y_{1}:Y\) and any
\(q_{01}:Q\left( x_{0} \right)\left( y_{1} \right)\), we have
\[\mathtt{centre}(\mathtt{inr}(y_{1}))(\mathtt{glue}(q_{01})) = \left| \left( q_{01},\mathtt{refl}_{\mathtt{glue}(q_{01})} \right) \right|_{m + n}.\]
By the definition of \(\mathtt{centre}\), the left-hand side is
\[\mathtt{transport}^{\hat{\mathtt{code}}}\left( \mathtt{pair}\hspace{0pt}^{=}(\mathtt{glue}(q_{01}),t) \right)\left( \left| \left( q_{00},\square \right) \right|_{m + n} \right).\]
This is precisely the case we discussed after \Cref{ABC-lemma} (since now
\(x_{0}\), \(q_{00}\) and \(q_{01}\) coincide with \(x_{1}\),
\(q_{10}\), \(q_{11}\)), thus it is indeed equal to
\(\left| \left( q_{01},\square \right) \right|_{m + n}\). \end{proof} We have
hence completed the proof of \Cref{blakers-lemma} and thus \Cref{blakers-thm}.

\section{Consequences}\label{sec-consequence}

The contractibility of \(\mathtt{code}\) in fact implies more than
\Cref{blakers-thm}, which only uses the case
\(\mathtt{code}(\mathtt{inr}(y_{1}))\). Our definition for the other
case, \(\mathtt{code}(\mathtt{inl}(x_{1}))\), immediately implies
the following: \begin{proposition}\redDiamond Assume the same as
in \Cref{blakers-thm}. Also let \(x_{0}:X\), \(y_{0}:Y\) and
\(q_{00}:Q\left( x_{0} \right)\left( y_{0} \right)\). Then for any
\(x_{1}:X\), the map
\[\tau:\lambda q_{10}.\mathtt{glue}(q_{00})\ct{\mathtt{glue}(q_{10})}^{- 1}:Q\left( x_{1} \right)\left( y_{0} \right) \rightarrow \left( \mathtt{inl}(x_{0}) = \mathtt{inl}(x_{1}) \right)\]
is \((m + n)\)-connected. \label{dual-blakers} \end{proposition}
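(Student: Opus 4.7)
The plan is to observe that this proposition is essentially a by-product of the machinery already constructed for \Cref{blakers-thm}: the proof should be almost immediate from the definition of $\mathtt{code}(\mathtt{inl}(x_1))$ together with \Cref{code-contr}. The whole point of defining $\mathtt{code}$ on both the $\mathtt{inl}$ and $\mathtt{inr}$ halves of $X \sqcup^Q Y$ (and doing the hard work of matching them up along $\mathtt{glue}$) was precisely to get the dual statement for free once contractibility is shown.

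First, I would unfold the fibre of $\tau$. By definition, for any $s : \mathtt{inl}(x_0) = \mathtt{inl}(x_1)$, we have
\[\mathtt{fib}_\tau(s) \equiv \sum_{q_{10} : Q(x_1)(y_0)} \bigl(\mathtt{glue}(q_{00}) \ct \mathtt{glue}(q_{10})^{-1} = s\bigr),\]
so that
\[\bigl\| \mathtt{fib}_\tau(s) \bigr\|_{m+n} \equiv \mathtt{code}(\mathtt{inl}(x_1))(s)\]
on the nose, by the $\mathtt{inl}$-clause in the definition of $\mathtt{code}$ given in \Cref{prop-code}. So the claim that $\tau$ is $(m+n)$-connected reduces to showing that $\mathtt{code}(\mathtt{inl}(x_1))(s)$ is contractible for every $s$.

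But this is exactly what \Cref{code-contr} provides, applied to the point $p \equiv \mathtt{inl}(x_1)$ and the path $r \equiv s$: for every such $p$ and every $r : \mathtt{inl}(x_0) = p$, $\mathtt{code}(p)(r)$ is contractible. Instantiating at $p \equiv \mathtt{inl}(x_1)$ yields contractibility of $\|\mathtt{fib}_\tau(s)\|_{m+n}$ for all $s : \mathtt{inl}(x_0) = \mathtt{inl}(x_1)$, which by definition means $\tau$ is $(m+n)$-connected.

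There is no real obstacle here since $x_0$, $y_0$ and $q_{00}$ are assumed as data rather than extracted from an $m$-connected type, so the relaxation step used before stating \Cref{blakers-lemma} is not needed. The only thing worth pausing on is the bookkeeping check that the fibre of $\tau$ truly matches the $\mathtt{inl}$-clause of $\mathtt{code}$ with no reordering or sign adjustments (in particular, that $q_{00}$ plays the same fixed role in both), but this is immediate from how $\tau$ and $\mathtt{code}(\mathtt{inl}(x_1))$ were defined.
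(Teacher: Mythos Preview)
Your proposal is correct and follows exactly the same approach as the paper's proof: identify $\|\mathtt{fib}_\tau(s)\|_{m+n}$ with $\mathtt{code}(\mathtt{inl}(x_1))(s)$ by definition, then invoke \Cref{code-contr}. Your additional remark that the relaxation step is unnecessary here (since $y_0$ and $q_{00}$ are given as data) is a nice clarification that the paper leaves implicit.
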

\begin{proof} For any
\(s:\left( \mathtt{inl}(x_{0}) = \mathtt{inl}(x_{1}) \right)\),
\(\left\| {\mathtt{fib}_{\tau}(s)} \right\|_{m + n}\) matches our
definition for \(\mathtt{code}(\mathtt{inl}(x_{1}))(s)\), which is
contractible by \Cref{code-contr}. \end{proof} Curiously, it is this dual case,
instead of \Cref{blakers-thm}, that directly gives rise to the Freudenthal
suspension theorem as below.
\begin{theorem}[Freudenthal suspension theorem]
  Let \(C\) be an \(n\)-connected type with \(n \geq - 1\) and \(x_{0}:C\). 
  Then the map \(\sigma:C \rightarrow \Omega(\Sigma C)\), given by
  \(\sigma: \equiv \lambda x.\mathtt{merid}(x)\ct{\mathtt{merid}(x_{0})}^{- 1}\),
  is \(2n\)-connected.
\label{freu} \end{theorem}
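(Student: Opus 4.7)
The plan is to realise $\Sigma C$ as a generalised pushout and invoke \Cref{dual-blakers} directly, following the hint preceding the statement. Take $X \equiv Y \equiv \mathbb{1}$ and define $Q : \mathbb{1} \to \mathbb{1} \to \Type$ by $Q(\star)(\star) \equiv C$. Under the identifications $\mathtt{inl}(\star) \equiv \mathtt{north}$, $\mathtt{inr}(\star) \equiv \mathtt{south}$, and $\mathtt{glue}_{\star, \star} \equiv \mathtt{merid}$, the generalised pushout $X \sqcup^{Q} Y$ coincides with $\Sigma C$, so $\Omega(\Sigma C)$ becomes $(\mathtt{inl}(\star) =_{X \sqcup^{Q} Y} \mathtt{inl}(\star))$.

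Next I would verify the hypotheses of \Cref{dual-blakers} with both connectivity parameters set to $n$: for each $x : \mathbb{1}$, the total type $\sum_{y : \mathbb{1}} Q(x)(y) \simeq C$ is $n$-connected by assumption, and symmetrically for the other side (the assumption $n \geq -1$ in \Cref{freu} matches what \Cref{blakers-thm} requires). Specialising \Cref{dual-blakers} to $x_{0} = y_{0} = x_{1} = \star$ with $q_{00}$ chosen to be the basepoint $x_{0}$ of $C$ then yields that the map
\[
\tau : C \to \Omega(\Sigma C), \qquad \tau(x) \equiv \mathtt{merid}(x_{0}) \ct \mathtt{merid}(x)^{-1}
\]
is $2n$-connected.

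To finish, I would observe that $\sigma(x) = \tau(x)^{-1}$, so for every $p : \Omega(\Sigma C)$,
\[
\mathtt{fib}_{\sigma}(p) = \sum_{x : C}\bigl( \tau(x)^{-1} = p \bigr) \simeq \sum_{x : C}\bigl( \tau(x) = p^{-1} \bigr) = \mathtt{fib}_{\tau}(p^{-1}),
\]
using that path inversion is an equivalence. The $2n$-truncation of the right-hand side is contractible by the previous paragraph, hence so is that of $\mathtt{fib}_{\sigma}(p)$, giving the desired $2n$-connectedness of $\sigma$. The only real obstacle is bookkeeping — confirming the match of constructors between $\Sigma C$ and the generalised pushout, and reconciling the sign convention $\sigma(x) = \tau(x)^{-1}$; the substantive work is already discharged by \Cref{dual-blakers}.
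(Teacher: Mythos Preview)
Your proposal is correct and follows essentially the same route as the paper: realise $\Sigma C$ as the generalised pushout with $X=Y=\mathbb{1}$ and $Q(\star)(\star)=C$, apply \Cref{dual-blakers} with $q_{00}=x_{0}$ to obtain the $2n$-connectedness of $\tau(x)=\mathtt{merid}(x_{0})\ct\mathtt{merid}(x)^{-1}$, and then transfer this to $\sigma$ via the fibrewise equivalence $\mathtt{fib}_{\sigma}(p)\simeq\mathtt{fib}_{\tau}(p^{-1})$ induced by path inversion. The paper's proof is identical up to notation (it writes $\sigma'$ for your $\tau$).
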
 \begin{proof}
\redDiamond We rewrite the suspension using the generalised
pushout. The suspension \(\Sigma C\) is the pushout of the diagram
\(\mathbb{1}_{N}\overset{f}{\leftarrow}C\overset{g}{\rightarrow}\mathbb{1}_{S}\),
so \(X \equiv \mathbb{1}_{N}\), \(Y \equiv \mathbb{1}_{S}\) and \(f,g\)
are two trivial maps. Then in this case,
\(Q\left( \star_{N} \right)\left( \star_{S} \right)\) is equivalent to
\(C\) and under this equivalence the map \(\mathtt{glue}_{NS}\) is
identified with \(\mathtt{merid}\). The connectivity assumptions are
easily seen to be satisfied, so by \Cref{dual-blakers}, the map
\(\sigma^{\prime}: \equiv \lambda x.\mathtt{merid}(x_{0})\ct{\mathtt{merid}(x)}^{- 1}:C \rightarrow \Omega(\Sigma C)\)
is \(2n\)-connected. Hence, for any \(s:\Omega(\Sigma C)\), the fibre
\(\mathtt{fib}_{\sigma^{\prime}}(s)\) is \(2n\)-connected. Notice that by
the definition of fibres and coherence operations, \[\begin{aligned}
\mathtt{fib}_{\sigma^{\prime}}  \left( s^{- 1} \right) & \equiv \sum_{x:C}\left( \mathtt{merid}(x_{0})\ct{\mathtt{merid}(x)}^{- 1} = s^{- 1} \right) \\
 & \simeq \sum_{x:C}\left( \mathtt{merid}(x)\ct{\mathtt{merid}(x_{0})}^{- 1} = s \right) \equiv \mathtt{fib}_{\sigma} (s),
\end{aligned}\] so \(\sigma\) is also \(2n\)-connected. \end{proof}

\begin{corollary}[Freudenthal equivalence, \protect{\cite[Corollary 8.6.14]{hott}}]
  Suppose that type \(X\) is \(n\)-connected and pointed
with \(n \geq - 1\). Then
\(\left\| X \right\|_{2n} \simeq \left\| {\Omega(\Sigma X)} \right\|_{2n}\).
\label{freu-equiv} \end{corollary}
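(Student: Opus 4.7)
The plan is to combine the Freudenthal suspension theorem (\Cref{freu}, the immediately preceding result) with \Cref{connected-induce-equivalence}, which states that any $k$-connected function induces an equivalence on its $k$-truncations. First I would invoke \Cref{freu} with $C \equiv X$ and $x_0 \equiv \star_X$; both hypotheses (that $X$ is $n$-connected and $n \geq -1$) match those given here. This yields a $2n$-connected map
\[\sigma : X \to \Omega(\Sigma X), \quad \sigma \equiv \lambda x.\mathtt{merid}(x) \ct \mathtt{merid}(\star_X)^{-1}.\]

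Then I would apply \Cref{connected-induce-equivalence} to $\sigma$ with the connectivity parameter set to $2n$. This immediately produces an equivalence $\|X\|_{2n} \simeq \|\Omega(\Sigma X)\|_{2n}$, as desired.

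There is no substantial obstacle: the statement is essentially a one-line corollary obtained by feeding the map $\sigma$ constructed in \Cref{freu} into the general principle that $k$-connected maps become equivalences after $k$-truncation. The constraint $n \geq -1$ is inherited from \Cref{freu} (and ultimately from the hypothesis on $m,n$ in \Cref{blakers-thm}), and no further verification is required.
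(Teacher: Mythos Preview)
Your proposal is correct and matches the paper's own proof exactly: the paper simply cites \Cref{freu} and \Cref{connected-induce-equivalence}, which is precisely the two-step argument you describe.
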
 \begin{proof}By \Cref{freu} and
\Cref{connected-induce-equivalence}.\end{proof} We can now complete
\Cref{tab-homotopy-groups} as promised. 
\begin{corollary}[Stability for spheres]
  If \(k \leq 2n - 2\), then
  \(\pi_{k + 1}\left( {\mathbb{S}}^{n + 1} \right) \simeq \pi_{k}\left( {\mathbb{S}}^{n} \right)\).
\label{stable-sphere} \end{corollary}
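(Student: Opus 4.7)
The plan is to combine the Freudenthal equivalence (\Cref{freu-equiv}) with the cumulativity of truncations (\Cref{trun-trun}) and the commutation of truncation with loop spaces (\Cref{truncation-omega-commute}). The key observation is that the truncation level in Freudenthal, namely $2n-2$, matches exactly the stability range $k \leq 2n-2$.

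First, I would note that by \Cref{s-n-connected}, $\mathbb{S}^{n}$ is $(n-1)$-connected and pointed. The case $n = 0$ makes the hypothesis $k \leq 2n - 2 = -2$ vacuous (since $k \geq 0$ for $\pi_{k+1}$ to make sense), so we may assume $n \geq 1$, in which case the connectedness level $n - 1 \geq 0$ meets the requirement $n - 1 \geq -1$ of \Cref{freu-equiv}. Applying Freudenthal equivalence to $X = \mathbb{S}^{n}$ yields
\[\left\| \mathbb{S}^{n} \right\|_{2n-2} \simeq \left\| \Omega(\Sigma \mathbb{S}^{n}) \right\|_{2n-2} \equiv \left\| \Omega \mathbb{S}^{n+1} \right\|_{2n-2},\]
where the last identification uses $\mathbb{S}^{n+1} \equiv \Sigma \mathbb{S}^{n}$.

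Next, since $k \leq 2n - 2$, applying $\left\| - \right\|_{k}$ to both sides and invoking \Cref{trun-trun} gives
\[\left\| \mathbb{S}^{n} \right\|_{k} \simeq \left\| \left\| \mathbb{S}^{n} \right\|_{2n-2} \right\|_{k} \simeq \left\| \left\| \Omega \mathbb{S}^{n+1} \right\|_{2n-2} \right\|_{k} \simeq \left\| \Omega \mathbb{S}^{n+1} \right\|_{k}.\]
Then I would unpack both homotopy groups using \Cref{truncation-omega-commute}: for the right-hand side,
\[\pi_{k}(\mathbb{S}^{n}) \equiv \left\| \Omega^{k}(\mathbb{S}^{n}) \right\|_{0} \simeq \Omega^{k}\left( \left\| \mathbb{S}^{n} \right\|_{k} \right);\]
and for the left-hand side, writing $\Omega^{k+1}(\mathbb{S}^{n+1}) \equiv \Omega^{k}(\Omega \mathbb{S}^{n+1})$,
\[\pi_{k+1}(\mathbb{S}^{n+1}) \equiv \left\| \Omega^{k}(\Omega \mathbb{S}^{n+1}) \right\|_{0} \simeq \Omega^{k}\left( \left\| \Omega \mathbb{S}^{n+1} \right\|_{k} \right).\]

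Finally, composing these equivalences with the truncated Freudenthal equivalence above yields $\pi_{k+1}(\mathbb{S}^{n+1}) \simeq \pi_{k}(\mathbb{S}^{n})$. There is no real obstacle here beyond keeping the indices consistent: the whole argument is a routine bookkeeping exercise once Freudenthal is available, since the bound $k \leq 2n - 2$ is chosen precisely so that truncation at level $k$ sits within the range where Freudenthal gives an equivalence.
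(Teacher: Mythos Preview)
Your proposal is correct and follows essentially the same route as the paper: invoke \Cref{s-n-connected} for the connectedness of \(\mathbb{S}^n\), apply the Freudenthal equivalence (\Cref{freu-equiv}) at level \(2(n-1)\), drop to level \(k\) via \Cref{trun-trun}, and then use \Cref{truncation-omega-commute} twice to rewrite both homotopy groups as \(\Omega^k\) of the relevant \(k\)-truncations. The only difference is that you explicitly dispose of the vacuous \(n=0\) case, which the paper leaves implicit.
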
 
\begin{proof} By \Cref{s-n-connected},
\({\mathbb{S}}^{n}\) is \((n - 1)\)-connected. By \Cref{freu-equiv} and by the
definition of \(n\)-spheres,
\(\left\| {\mathbb{S}}^{n} \right\|_{2(n - 1)} \simeq \left\| {\Omega(\Sigma{\mathbb{S}}^{n})} \right\|_{2(n - 1)} \equiv \left\| {\Omega({\mathbb{S}}^{n + 1})} \right\|_{2(n - 1)}\).
Since \(k \leq 2(n - 1)\), by \Cref{trun-trun},
\(\left\| {\mathbb{S}}^{n} \right\|_{k} \simeq \left\| {\Omega({\mathbb{S}}^{n + 1})} \right\|_{k}\).
We then calculate \[\begin{aligned}
\pi_{k + 1}\left( {\mathbb{S}}^{n + 1} \right) & \equiv \left\| {\Omega^{k + 1}\left( {\mathbb{S}}^{n + 1} \right)} \right\|_{0} \equiv \left\| {\Omega^{k}\left( \Omega\left( {\mathbb{S}}^{n + 1} \right) \right)} \right\|_{0} \\
 & \simeq \Omega^{k}\left\| {\Omega\left( {\mathbb{S}}^{n + 1} \right)} \right\|_{k} \simeq \Omega^{k}\left\| {\mathbb{S}}^{n} \right\|_{k} \simeq \left\| {\Omega^{k}\left( {\mathbb{S}}^{n} \right)} \right\|_{0} \equiv \pi_{k}\left( {\mathbb{S}}^{n} \right),
\end{aligned}\] where we use \Cref{truncation-omega-commute} twice to exchange
truncation and \(\Omega\). \end{proof} \begin{corollary}
\(\pi_{n}\left( {\mathbb{S}}^{n} \right) \simeq {\mathbb{Z}}\) for
\(n \geq 1\). \label{pinsn-thm} \end{corollary}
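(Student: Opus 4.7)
The plan is to prove the statement by induction on $n$, with two base cases handled separately.

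For the base case $n = 1$, the result $\pi_1(\mathbb{S}^1) \simeq \mathbb{Z}$ is exactly \Cref{pk-s1}, obtained via the encode–decode calculation of \Cref{s1-lemma}. For the next case $n = 2$, stability (\Cref{stable-sphere}) is not yet available since it requires $k \leq 2n - 2$, which fails when reducing $\pi_2(\mathbb{S}^2)$ to $\pi_1(\mathbb{S}^1)$. Instead, I would invoke \Cref{pi2-pi3}, which gives $\pi_2(\mathbb{S}^2) \simeq \mathbb{Z}$ directly from the long exact sequence of the Hopf fibration combined with $\pi_1(\mathbb{S}^1) \simeq \mathbb{Z}$ and $\pi_1(\mathbb{S}^3) \simeq \pi_2(\mathbb{S}^3) \simeq 0$ from \Cref{pk-sn-k-less}.

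For the inductive step $n \geq 3$, I would apply \Cref{stable-sphere} with parameters $k := n-1$ and sphere dimension $n-1$: the hypothesis becomes $n - 1 \leq 2(n-1) - 2 = 2n - 4$, which holds precisely when $n \geq 3$. This yields the equivalence
\[ \pi_n(\mathbb{S}^n) \simeq \pi_{n-1}(\mathbb{S}^{n-1}), \]
so by the inductive hypothesis $\pi_{n-1}(\mathbb{S}^{n-1}) \simeq \mathbb{Z}$, we conclude $\pi_n(\mathbb{S}^n) \simeq \mathbb{Z}$.

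The main ``obstacle'' is conceptual rather than technical: all the heavy lifting has already been done in the earlier chapters. The case $n = 1$ requires the encode–decode machinery, the case $n = 2$ requires the Hopf fibration (which in turn uses the H-space structure on $\mathbb{S}^1$ and the Hopf construction), and the inductive step requires the Freudenthal/Blakers–Massey package feeding into \Cref{stable-sphere}. The only subtlety is noticing that stability alone does not initiate the induction because of the range restriction $k \leq 2n-2$, forcing the $n=2$ case to be treated by the Hopf fibration rather than by a single uniform argument.
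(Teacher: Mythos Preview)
Your proposal is correct and follows essentially the same argument as the paper: base cases $n=1$ via \Cref{pk-s1} and $n=2$ via \Cref{pi2-pi3}, then induction using \Cref{stable-sphere}. The only cosmetic difference is that the paper phrases the inductive step forward (from $n$ to $n+1$ for $n\geq 2$, checking $n\leq 2n-2$) whereas you phrase it backward (from $n$ to $n-1$ for $n\geq 3$), which is the same computation reindexed.
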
 \begin{proof}
\(\pi_{1}\left( {\mathbb{S}}^{1} \right) \simeq {\mathbb{Z}}\) by \Cref{pk-s1}
and \(\pi_{2}\left( {\mathbb{S}}^{2} \right) \simeq {\mathbb{Z}}\) by
\Cref{pi2-pi3}. When \(n \geq 2\), \(n \leq 2(n - 1)\), so by induction on
\(n\) and \Cref{stable-sphere}, the result follows. \end{proof}

\begin{corollary}
\(\pi_{3}\left( {\mathbb{S}}^{2} \right) \simeq {\mathbb{Z}}\). \label{pi3s2} \end{corollary}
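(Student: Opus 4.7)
The plan is to combine the two corollaries we have just proved, namely the Hopf-fibration consequence (\Cref{pi2-pi3}) and the computation of $\pi_n(\mathbb{S}^n)$ (\Cref{pinsn-thm}). The Hopf fibration gave us a long exact sequence whose outcome, after collapsing the trivial terms $\pi_k(\mathbb{S}^1)$ for $k \geq 2$, is an isomorphism $\pi_k(\mathbb{S}^3) \simeq \pi_k(\mathbb{S}^2)$ for every $k \geq 3$. On the other hand, the stability corollary from Freudenthal gives $\pi_n(\mathbb{S}^n) \simeq \mathbb{Z}$ for all $n \geq 1$.

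The proof is then a one-line specialisation. First, apply \Cref{pinsn-thm} with $n = 3$ to obtain $\pi_3(\mathbb{S}^3) \simeq \mathbb{Z}$. Second, apply \Cref{pi2-pi3} with $k = 3$ to obtain $\pi_3(\mathbb{S}^3) \simeq \pi_3(\mathbb{S}^2)$. Composing the two equivalences gives $\pi_3(\mathbb{S}^2) \simeq \mathbb{Z}$.

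There is essentially no obstacle here, since both ingredients have already been established: the Hopf fibration supplies the nontrivial identification between homotopy groups of $\mathbb{S}^2$ and $\mathbb{S}^3$ in degree $\geq 3$, and Freudenthal stability supplies the base calculation in degree equal to the dimension. The only thing worth noting is that this is a genuinely nontrivial topological fact in classical homotopy theory (it is essentially the Hopf invariant computation), and here it drops out purely by combining the long exact sequence of the Hopf fibration with a single application of the Freudenthal suspension theorem.
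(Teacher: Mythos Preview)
Your proof is correct and matches the paper's approach exactly: the paper writes simply $\pi_{3}(\mathbb{S}^{2}) \simeq \pi_{3}(\mathbb{S}^{3}) \simeq \mathbb{Z}$ by \Cref{pi2-pi3} and \Cref{pinsn-thm}, which is precisely the composition you describe.
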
 \begin{proof}
\(\pi_{3}\left( {\mathbb{S}}^{2} \right) \simeq \pi_{3}\left( {\mathbb{S}}^{3} \right) \simeq {\mathbb{Z}}\)
by \Cref{pi2-pi3} and \Cref{pinsn-thm}. \end{proof}

\chapter{Conclusion and Further Discussions}

\begin{table}[ht]
\centering
\caption{Main results and their locations in this dissertation.}
\label{tab-main-results}
\renewcommand{\arraystretch}{1.4}
\begin{tabular}{>{\centering\arraybackslash}m{0.45\textwidth} >{\centering\arraybackslash}m{0.45\textwidth}}
\toprule
\textbf{Result} & \textbf{Location(s)} \\
\midrule
Homotopy groups of $n$-spheres  in \Cref{tab-homotopy-groups} & Corollaries~\ref{pk-s1}, \ref{pk-sn-k-less}, \ref{pi2-pi3}, \ref{pinsn-thm}, \ref{pi3s2} \\
Fibre sequence of a pointed map & \Cref{fibre-sequence-2} \\
Long exact sequence of a pointed map & \Cref{fibre-les} \\
Hopf construction & \Cref{hopf-construction} \\
Hopf fibration & \Cref{hopf-fibration} \\
Blakers--Massey theorem & \Cref{blakers-thm} \\
Freudenthal suspension theorem & \Cref{freu} \\
Stability for spheres & \Cref{stable-sphere} \\
\bottomrule
\end{tabular}
\end{table}

The main results in this dissertation are listed in \Cref{tab-main-results}.
We briefly evaluate our synthetic approach to homotopy theory:

\begin{itemize}
\item
  \emph{Formal abstraction} allows us to focus on the homotopy
  properties without considering the underlying set-theoretic
  structures. For example, unlike in classical topology, the universal
  cover of \({\mathbb{S}}^{1}\) (see \Cref{s1-lemma}) does not involve real
  numbers \(\mathbb{R}\) (a helix projecting onto the circle
  \protect{\cite[Theorem 1.7]{hatcher}}) but only a fibration with fibre
  \(\mathbb{Z}\), a primitive notion expressed by a dependent type.
\item
  All notions are \emph{homotopy-invariant}. For example, our formation
  of the Blakers--Massey theorem \Cref{blakers-thm} replaces set intersection
  and union in classical text \protect{\cite[Theorem 4.23]{hatcher}} with pushouts,
  leading to a more general statement.
\item
  \emph{Proof relevance} has enabled us to effectively reuse pieces of
  previous results, such as the application of \Cref{ABC-lemma} in
  \Cref{sec-contra}.
\item
  The \emph{rule-based} nature of type theory has rendered many proofs
  routine applications of induction principles and existing lemmas.
  Whilst this allows for mechanised reasoning, occasionally proofs lack
  geometric intuition and appear overly technical. Nevertheless, there
  are methods, such as \protect{\cite[Section 8.1.5]{hott}} for
  \(\pi_{k}({\mathbb{S}}^{1}\)) and \protect{\cite[Section 4]{blakers}} for the
  Blakers--Massey theorem, that offer easier homotopy-theoretic
  interpretations.
\end{itemize}

These further topics are related to our work:

\begin{itemize}
\item
  The proof of the Blakers--Massey theorem \protect{\cite{blakers}} has inspired a
  `reverse engineered' classical proof \protect{\cite{rezk}} and a generalised version
  valid in an arbitrary higher topos and with respect to an arbitrary
  \emph{modality} \protect{\cite{anel}}.
\end{itemize}

\begin{itemize}
\item
  The Blakers--Massey theorem is used in proving that there is a natural
  number \(n\) such that
  \(\pi_{4}\left( {\mathbb{S}}^{3} \right) \simeq {\mathbb{Z}}/n{\mathbb{Z}}\)
  \protect{\cite[Proposition 3.4.4]{brunerie}}.
\end{itemize}

\begin{itemize}
\item
  There is a fibration over \({\mathbb{S}}^{4}\) with fibre
  \({\mathbb{S}}^{3}\) and total space \({\mathbb{S}}^{7}\) by giving an
  H-space structure on \({\mathbb{S}}^{3}\) \protect{\cite[Theorem 4.10]{cayley}}.
  In particular, \(\pi_{7}\left( {\mathbb{S}}^{4} \right)\) has an
  element of infinite order \protect{\cite[Corollary 4.11]{cayley}}.
\end{itemize}

We also state some speculations based on current work:

\begin{itemize}
\item
  \Cref{sec-blakers} has left the proofs on coherence operations unattended.
  As remarked by \protect{\cite[p. 23]{brunerie}}, in general, these proofs can be
  done via recursive application of path inductions. Here we speculate
  that there may be automated methods or some `meta-theorem' that
  reduce the need to spell out these proofs.
\item
  Our diagrams, combined with the topological interpretations of HoTT,
  suggest the potential for a systematic approach to HoTT-based
  graphical reasoning. For example, proof assistants may be equipped
  with a graphical interface, where the application of rules is
  visualised through shape transformations. This could potentially offer
  more intuition and simplify mechanised reasoning.
\end{itemize}

\include{conclusions}

\appendix
\chapter{The `hub and spoke' construction of \texorpdfstring{$n$}{n}-truncations}
\label{app:hub}
\newcommand{\figTrun}{
\begin{tikzpicture}[
    label/.style={rectangle, inner sep=4pt, outer sep=0pt},
]
\draw[black] (-2,0) circle (1.5cm) node[label, above=1.5cm, black] {$\mathbb{S}^{n+1}$};
\draw[black] (5,0) circle (3cm) node[label, above=3cm, black] {$\left\| A \right \| _n$};
\draw[gray, dashed] (5,0.6) circle (1.6cm);
\fill (-2,-1.5) circle (2pt) node[label, below] {$\mathtt{base}$};
\fill (-3.5,0) circle (2pt) node[label, left] {$x$};
\fill (5,-1) circle (2pt) node[label, below left] {$r(\mathtt{base})$};
\fill (3.4,0.6) circle (2pt) node[label, left] {$r(x)$};
\fill (5,-2.4) circle (2pt) node[label, below] {$b$};
\fill (5.5,2.11) circle (2pt) node[label, above] {$h(r)$};
\draw[-{Stealth[scale=1.5]}] (3.4,0.6) -- node[above left, xshift=8pt] {$s(r)(x)$} (5.5,2.11);
\draw[-{Stealth[scale=1.5]}] (5,-1) -- node[right] {$s(r)(\mathtt{base})$} (5.5,2.11);
\draw[-{Stealth[scale=1.5]}] (5,-1) -- node[right] {$r_*$} (5,-2.4);
\draw[-{Straight Barb[scale=1.5]}] (-0.5,0) -- node[above] {$r$} (2, 0);
\end{tikzpicture}
}

\begin{definition}
Let \(A:\mathtt{Type}\) and \(n \geq - 1\). Define
\(\left\| A \right\|_{n}\), the \textbf{\(n\)-truncation} of \(A\), as
the higher inductive type generated by two point constructors
\(| - |_{n}:A \rightarrow \left\| A \right\|_{n}\) and
\(h:\left( {\mathbb{S}}^{n + 1} \rightarrow \left\| A \right\|_{n} \right) \rightarrow \left\| A \right\|_{n}\)
and a path constructor
\[s:\left( r:\left( {\mathbb{S}}^{n + 1} \rightarrow \left\| A \right\|_{n} \right) \right) \rightarrow \left( x:{\mathbb{S}}^{n + 1} \right) \rightarrow \left( r(x) =_{\left\| A \right\|_{n}}^{}h(r) \right).\]
\end{definition}

The rationale behind \(h\) and \(s\) (`hub and spoke') in this definition will be clear in
the following proof, visualised by \Cref{fig-trun}.

\begin{figure}[H]
    \centering
    \figTrun
    \caption{Diagram for \Cref{truncation-n-type}.}
    \caption*{\small \(n\) is thought of as \(0\) and the dashed circle
represents the `image' of \({\mathbb{S}}^{n + 1}\) under \(r\).}
    \label{fig-trun}
\end{figure}

\begin{lemma}[\protect{\cite[Lemma 7.3.1]{hott}}]
    For any \(A:\Type\) and
\(n \geq - 1\), \(\left\| A \right\|_{n}\) is an \(n\)-type.
\label{truncation-n-type}
\end{lemma}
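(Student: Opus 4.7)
The plan is to reduce the statement to the following characterization of $n$-types: for $n \geq -1$, a type $X$ is an $n$-type if and only if every map $r : \mathbb{S}^{n+1} \to X$ admits a \emph{hub-and-spoke extension}, i.e., a point $\tilde{h}(r) : X$ together with a family of paths $\tilde{s}(r) : (x : \mathbb{S}^{n+1}) \to (r(x) =_X \tilde{h}(r))$. Geometrically, this says every $(n+1)$-sphere in $X$ bounds a cone in $X$, which is the homotopical content of being $n$-truncated.

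Granting this characterization, the theorem is essentially by definition. The HIT $\|A\|_n$ is equipped with exactly this data via its constructors: for any $r : \mathbb{S}^{n+1} \to \|A\|_n$, we set $\tilde{h}(r) :\equiv h(r)$ and $\tilde{s}(r) :\equiv s(r)$, which are of precisely the right types. So $\|A\|_n$ satisfies the characterization and is therefore an $n$-type.

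The bulk of the work lies in proving the characterization, which I would do by induction on $n \geq -1$. For the base case $n = -1$, the data of a map $\mathbb{S}^0 \equiv \mathbb{2} \to X$ is a pair of points $x, y : X$, and a hub-and-spoke extension amounts to a common point $\tilde{h} : X$ with paths $x = \tilde{h}$ and $y = \tilde{h}$; concatenation yields $x = y$, showing $X$ is a mere proposition. Conversely, if $X$ is a mere proposition then any hub will do. For the inductive step, I would exploit the equivalence $\mathbb{S}^{n+1} \simeq \Sigma \mathbb{S}^n$ together with the loop-suspension adjunction to identify pointed maps $\mathbb{S}^{n+1} \to X$ whose two poles land at specified $x, y : X$ with maps $\mathbb{S}^n \to (x =_X y)$. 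Under this identification, a hub-and-spoke extension at level $n+1$ for $X$ (where the hub must be reached by paths from all values of the map, including both poles) translates via path composition to a hub-and-spoke extension at level $n$ for the identity types $x =_X y$. By the inductive hypothesis this is equivalent to each $x =_X y$ being an $(n-1)$-type, hence to $X$ being an $n$-type.

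The main obstacle is the bookkeeping in the inductive step: transforming hub-and-spoke data for a map $\mathbb{S}^{n+1} \to X$ (obtained by applying the spokes at each point of the sphere, and in particular at the two poles of $\Sigma \mathbb{S}^n$) into the corresponding data for an induced map $\mathbb{S}^n \to (x =_X y)$ requires careful manipulation of \texttt{transport} along meridians, repeated applications of \Cref{identity-path-1} and \Cref{transport-path}, and function extensionality to convert pointwise identities into path equalities. These coherences are technical but routine; once they are in place, the result follows.
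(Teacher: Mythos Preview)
Your proposal is correct and the characterization you state is valid; the inductive step really does go through once you unpack a map $\Sigma\mathbb{S}^n \to X$ into two endpoints and a family $\mathbb{S}^n \to (x=y)$, and match hub-and-spoke data on the suspension with hub-and-spoke data on the path type. However, the paper takes a different and shorter route. Rather than proving your characterization by induction on $n$, it invokes two results already on hand: \Cref{omega-contractible} (a type is an $n$-type iff all its $(n+1)$-fold loop spaces are contractible) and the suspension--loop adjunction $\Omega^{n+1}X \simeq \mathtt{Map}_\star(\mathbb{S}^{n+1},X)$. These reduce the goal to showing, for each basepoint $b$, that $\mathtt{Map}_\star(\mathbb{S}^{n+1},\|A\|_n)$ is contractible, and the paper then exhibits the contraction directly: the constant map $c_b$ is the centre, and for any pointed $(r,r_\star)$ one builds the homotopy $r \sim c_b$ pointwise as $r_\star^{-1}\ct s(r)(\mathtt{base})\ct s(r)(x)^{-1}$, checking the basepoint coherence by hand. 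So both arguments ultimately use the constructors $h,s$ in the same way; the difference is that the paper outsources the link between ``spheres are null'' and ``is an $n$-type'' to two black-box lemmas, while you re-derive that link from scratch by induction. Your approach is more self-contained and yields the sphere-filling characterization as a reusable lemma; the paper's is quicker given the ambient machinery.
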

\begin{proof}
Take any \(b:\left\| A \right\|_{n}\) and let
\(\left\| A \right\|_{n}\) be pointed by \(b\). By \Cref{omega-contractible},
it suffices to prove that \(\Omega^{n + 1}\left\| A \right\|_{n}\) is
contractible. By a corollary of the `suspension-loop adjunction' \cite[Lemma 6.5.4]{hott}, it then suffices to prove that the
type
\(M: \equiv \mathtt{Map}\hspace{0pt}_{\star}\left( {\mathbb{S}}^{n + 1},\left\| A \right\|_{n} \right)\)
is contractible. The constant function \(c_{b}: \equiv \lambda x.b\) is
a pointed map of type \(M\) witnessed by \(\mathtt{refl}_{b}\), and we
claim it to be the centre of contraction, i.e., for any pointed function
\(r:{\mathbb{S}}^{n + 1} \rightarrow_{\star}\left\| A \right\|_{n}\)
witnessed by \(r_{\star}:r\left( \mathtt{base} \right) = b\), we have
\(\left( c_{b},\mathtt{refl}_{b} \right) =_{M}^{}\left( r,r_{\star} \right)\).
By \Cref{sigma-path}, it suffices to find a path \(q:c_{b} = r\) such that
\(q_{\ast}\left( \mathtt{refl}_{b} \right) = r_{\star}\). Using
function extensionality, to find \(q\), it suffices to find a path
\(q_{x}:b = r(x)\) for any \(x:{\mathbb{S}}^{n + 1}\). Now using the
constructors we have a point \(h (r) :\left\| A \right\|_{n}\) along with
two paths \(s(r) (x) :r(x) = h(r)\) and
\( s(r) \left( \mathtt{base} \right) :r\left( \mathtt{base} \right) = h(r)\).
Thus the desired path \(q_{x}\) is given by
\[r_{\star}^{- 1}\, \ct \, s(r) \left( \mathtt{base} \right)  \, \ct \,s{(r)(x)}^{- 1}.\]
Finally, by \Cref{identity-path-1},
\(q_{\ast}\left( \mathtt{refl}_{b} \right) = r_{\star}\) is equivalent
to
\(q_{\mathtt{base}}^{- 1}\, \ct \,\mathtt{refl}_{b}\, \ct \,\mathtt{refl}_{b} = r_{\star}\),
which holds by the definition of \(q_{\mathtt{base}}\) and coherence
operations. 
\end{proof}

\addcontentsline{toc}{chapter}{Bibliography}
\bibliographystyle{alpha}  
\bibliography{main}        

\end{document}